\documentclass[]{sn/interact}

\usepackage{epstopdf}
\usepackage[caption=false]{subfig}

\usepackage[numbers,sort&compress]{natbib}
\bibpunct[, ]{[}{]}{,}{n}{,}{,}
\renewcommand\bibfont{\fontsize{10}{12}\selectfont}

\theoremstyle{plain}
\newtheorem{theorem}{Theorem}[section]
\newtheorem{lemma}[theorem]{Lemma}

\theoremstyle{definition}

\theoremstyle{remark}
\newtheorem{remark}{Remark}[section]

\usepackage{hyperref}
\usepackage{graphicx}
\usepackage{dsfont}
\usepackage{mathtools}
\usepackage{wrapfig}
\usepackage{amssymb, amsmath, latexsym}
\usepackage{nicefrac}
\usepackage{hhline}
\usepackage{multirow}
\usepackage{comment}
\usepackage{pifont}

\usepackage[colorinlistoftodos,bordercolor=orange,backgroundcolor=orange!20,linecolor=orange,textsize=scriptsize]{todonotes}
\usepackage{algorithm}\usepackage{algpseudocode}
\usepackage[symbol]{footmisc}

\usepackage{tikz}
\usetikzlibrary{matrix, positioning, fit}

\def \R {\mathbb R}

\def\wtgg{\widetilde{\nabla}}
\def\wha{\widehat{\alpha}}
\def\whdelta{\widehat{\delta}}
\def\whL{\widehat{L}}

\newcommand{\proofref}[1]{\hyperref[proof:#1]{\ensuremath{\downarrow}}}

\begin{document}


\title{Adaptive First-Order Methods with Composite Gradient Error\thanks{}}

\author{
\name{
Artem Vasin \textsuperscript{a} and
Alexander Tyurin \textsuperscript{b, c} and
Alexander Gasnikov \textsuperscript{d, a, e, f}
}
\affil{
\textsuperscript{a}Moscow Independent Research Institute of Artificial Intelligence, Moscow, Russia.
\textsuperscript{b}AXXX, Moscow, Russia.
\textsuperscript{c}Applied AI Institute, Moscow, Russia.
\textsuperscript{d}Innopolis University, Innopolis, Russia.
\textsuperscript{e}Steklov Mathematical Institute of the Russian Academy of Sciences, Moscow, Russia.
\textsuperscript{f}Institute for Information Transmission Problems, Moscow, Russia.
}
}

\date{Received: date / Accepted: date}
\maketitle              
\begin{abstract}
We study first-order optimization with inexact gradients whose errors combine a component proportional to the true gradient norm and a uniformly bounded absolute component. We develop an adaptive gradient descent scheme that selects estimates of the smoothness constant and both error parameters through a computable backtracking condition using exact function values. Its analysis yields convergence for smooth objectives and functions satisfying Polyak–$\L$ojasiewicz condition. For strongly convex objectives, we propose a composite-error intermediate gradient method, that adapts to unknown smoothness while using known error bounds and a strong convexity parameter $\mu$. With fixed smoothness $L$, acceleration is retained when $\alpha=\mathcal{O}\left(\left(\mu / L \right)^{\nicefrac{1}{4}}\right)$, larger admissible errors yield intermediate rates. Throughout the range $0 \leq \alpha \leq \nicefrac{1}{3}$, the absolute error bound $\delta$ contributes at most $\mathcal{O} \left(\delta^2/\mu \right)$ to the objective residual, without additional amplification by the condition number.
\end{abstract}


\section{Introduction}\label{Introduction}

Inexact gradient information affects both the progress of an optimization method and the accuracy of its output. Relative errors scale with the magnitude of the true gradient and therefore diminish as the gradient vanishes, whereas an absolute error component may persist near a minimizer. When the smoothness constant or the error parameters are unknown, the algorithm must also determine suitable step sizes from the information available during the iteration. The purpose of this paper is to study these issues within a common deterministic error model and to develop adaptive gradient methods with explicit convergence guarantees.

We consider the unconstrained minimization problem:
\begin{equation}\label{optim}
    \min_{x\in\mathbb{R}^n} f(x),
\end{equation}
where $f:\mathbb{R}^n\to\mathbb{R}$ is differentiable. We assume that the  minimum is attained and write $f^*=\min\limits_{x\in\mathbb{R}^n}f(x)$. Given an initial point $x^0$, let $x^*$ be a minimizer closest to $x^0$ in the Euclidean norm, and set $R=\|x^0-x^*\|_2$. Throughout, $\langle\cdot,\cdot\rangle$ and $\|\cdot\|_2$ denote the Euclidean inner product and norm, respectively. For a differentiable function, convexity is characterized by
\begin{equation}\label{convexity}
    (\forall x,y\in\mathbb{R}^n) \qquad f(x)+ \langle \nabla f(x),y - x \rangle \leqslant f(y).
\end{equation}
We express $L$-smoothness, with $L>0$, through the quadratic upper model
\begin{equation}\label{smooth cond}
    (\forall x,y \in\mathbb{R}^n) \qquad f(y) \leqslant f(x) + \langle \nabla f(x), y - x \rangle
    + \frac{L}{2} \|x - y\|_2^2.
\end{equation}
For the analysis of the intermediate method, we additionally assume that
$f$ is $\mu$-strongly convex, with $\mu>0$:
\begin{equation}\label{strong conv}
    (\forall x,y \in\mathbb{R}^n)\qquad f(x) + \langle \nabla f(x), y - x \rangle +\frac{\mu}{2} \|x - y\|_2^2 \leqslant f(y).
\end{equation}
In this case the minimizer is unique. Some of our gradient descent results
instead use the Polyak--{\L}ojasiewicz (PL) condition with parameter
$\mu>0$:
\begin{equation}\label{PL cond}
    (\forall x\in\mathbb{R}^n) \qquad \frac{1}{2\mu}\|\nabla f(x)\|_2^2 \geqslant f(x) - f^*.
\end{equation}

We consider gradient-based first-order methods; standard treatments of
this class are given in
\cite{gasnikov2017universal,bubeck2015convex,beck2017first,nemirovski1995information,nesterov2018lectures,polyak1987introduction}.
When gradients are queried at the iterates, the usual linear-span
restriction takes the form
\begin{equation}\label{linear first order method}
    \begin{gathered}
        x^{N + 1} \in x^0 + \operatorname{span} \left\{ \wtgg f(x^0),\wtgg f(x^1),\ldots,\wtgg f(x^{N}) \right\}, \\
        \wtgg f(x^k) \text{ is an approximation for } \nabla f(x^k).
    \end{gathered}
\end{equation}

The exact gradient is assumed to be unavailable. Instead, an inexact
oracle returns $\widetilde{\nabla}f(x)$ satisfying the composite error
condition
\begin{equation}\label{noise condition}
    (\forall x\in\mathbb{R}^n)\qquad \|\wtgg f(x) - \nabla f(x)\|_2 \leqslant \alpha\|\nabla f(x)\|_2 + \delta, \quad 0 \leqslant \alpha < 1, 0 \leqslant \delta.
\end{equation}
Both adaptive procedures use exact function values to evaluate their acceptance conditions. Thus, the inexactness considered here concerns the gradient, not the function values. The proposed methods can also be run using exact specifications of the function and the error, in which case the function values themselves need not be used.

Various models of inexact first-order information have been studied in \cite{devolder2013first,devolder2014first,polyak1987introduction,vasin2023accelerated,gasnikov2017universal,ajalloeian2020convergence}. Our analysis is formulated directly in terms of the norm bound~\eqref{noise condition}. To make its two components explicit, define $\zeta(x) = \wtgg f(x) - \nabla f(x)$. By Lemma~\ref{noise decompose}, condition~\eqref{noise condition} is equivalent to the existence of a decomposition
\begin{equation}\label{noise def abs rel}
    \begin{gathered}
        \zeta(x)=\wtgg f(x)-\nabla f(x) = \zeta_a(x) + \zeta_r(x), \quad \|\zeta_a(x)\|_2 \leqslant \delta, \quad \|\zeta_r(x)\|_2 \leqslant \alpha \|\nabla f(x)\|_2.
    \end{gathered}
\end{equation}
The purely relative model is obtained by setting $\delta=0$:
\begin{equation}\label{eq:relative noise condition}
    (\forall x\in\mathbb{R}^n)\qquad \|\widetilde{\nabla}f(x) - \nabla f(x)\|_2 \leqslant\alpha\|\nabla f(x)\|_2.
\end{equation}
The purely absolute model corresponds to $\alpha=0$:
\begin{equation}\label{eq:absolute noise condition}
    (\forall x\in\mathbb{R}^n) \qquad \|\widetilde{\nabla} f(x) - \nabla f(x)\|_2 \leqslant\delta.
\end{equation}

\noindent \textbf{Contributions.} We develop two adaptive procedures with different information requirements. The first is a gradient descent method that allows adaptation of the smoothness estimate and of both error estimates. Separate switches determine which quantities are updated, while a shared backtracking index controls the active estimations. We obtain convergence for smooth and P\L objectives. Regularization technique yields result for convex problems with purely relative gradient error. The second procedure is the Algorithm~\ref{alg ce-aigm} (CE-AIGM). For strongly convex objectives, it adapts the smoothness estimate while keeping the strong convexity parameter and the error bounds fixed and known. In the fixed-smoothness setting, the accelerated convergence is retained when $\alpha = \mathcal{O}((\mu/L)^{\nicefrac{1}{4}})$, larger admissible relative errors lead to intermediate guarantees (see Theorem~\ref{theorem:ce_aigm_convergence}). 

\noindent \textbf{Organization.} Section~\ref{sec:adaptive_gd} presents adaptive gradient descent and its stationarity, PL, and regularization-based guarantees. Section~\ref{sec:adaptive_inter_method} introduces CE-AIGM and explains its accelerated and intermediate regimes. Auxiliary inequalities, regularization lemmas, and the proofs for the two algorithms are collected in Appendix~\ref{sec:appendix_misc}-\ref{sec:appendix_intermediate}.

\section{Adaptive gradient descent} \label{sec:adaptive_gd}

We first consider gradient descent when the smoothness constant and the parameters of the composite error model are not necessarily known. Algorithm~\ref{alg:adaptive gd} uses a computable backtracking condition to select a step together with the error estimates needed to accept it. It also provides options $F_{\alpha}$, $F_{\delta}$, and $F_{L}$ that enable or disable the corresponding adaptivity. The active parameters nevertheless share a single search index, rather than being estimated through three independent searches. An estimate whose adaptation is disabled must already be a guaranteed upper bound on the corresponding parameter:
\begin{align*}
    F_\alpha=\textbf{false} \Longrightarrow \alpha_0 \geqslant\alpha, \;
    F_\delta=\textbf{false} \Longrightarrow \delta_0 \geqslant\delta, \;
    F_L=\textbf{false} \Longrightarrow L_0 \geqslant L.
\end{align*}
For the maxima defining the search bounds below, we use the conventions
$0/0=1$ and $\log_2 0=-\infty$.

\begin{algorithm}[H]
\caption{Adaptive Gradient Descent $(x^0, N, \alpha_0, F_{\alpha}, \delta_0, F_{\delta}, L_0, F_{L})$}  
\label{alg:adaptive gd}
\begin{algorithmic}[1]
\State
\noindent {\bf Input:} Starting point $x^0$, number of steps $N \geqslant 1$.
\State {\bf Input:} $0 \leqslant \alpha_0 < 1$ - initial estimation of relative error,
\State {\bf Input:} $F_{\alpha} \in \lbrace \textbf{false}, \textbf{true} \rbrace$ - whether to adapt the relative error.
\State {\bf Input:} $\delta_0 \geqslant 0$ - initial estimation of absolute error,
\State {\bf Input:} $F_{\delta} \in \lbrace \textbf{false}, \textbf{true} \rbrace$ - whether to adapt the absolute error.
\State {\bf Input:} $L_0 > 0$ - initial estimation of smoothness,
\State {\bf Input:} $F_L \in \lbrace \textbf{false}, \textbf{true} \rbrace$ -  whether to adapt the smoothness.
\State {\bf Set} $J_0 = 0$.
\For {$k = 0 \dots N - 1$}
    \State $\wha_k, \whdelta_k, \whL_k, \theta_k, x^{k + 1} = \text{Step}\left(J_k, x^k, \alpha_0, F_{\alpha}, \delta_0, F_{\delta}, L_0, F_{L} \right)$.

    \While {$f(x^{k + 1}) > f(x^{k}) - \theta_k \| \wtgg f(x^{k}) \|_2^2 + \frac{1}{4 (1 + \wha_k)^2 \whL_k} \whdelta_k^2$}
        \State $J_k = J_k + 1$.
        \State $\wha_k, \whdelta_k, \whL_k, \theta_k, x^{k + 1} = \text{Step}\left(J_k, x^k, \alpha_0, F_{\alpha}, \delta_0, F_{\delta}, L_0, F_{L} \right)$.
    \EndWhile
   \State $J_{k + 1} = \max \{0, J_{k} - 1 \}$.
\EndFor
\State 
\noindent {\bf Output:} $x^N$.

\State

\Function{Step}{$t, x, \alpha_0, F_{\alpha}, \delta_0, F_{\delta}, L_0, F_{L}$}
    \State $\wha = 1 - \frac{1 - \alpha_0}{2^t}$ \textbf{If} $(F_{\alpha}$ is \textbf{true) else} $\alpha_0$.
    \State $\whdelta = 2^t \delta_0$ \textbf{If} $(F_{\delta}$ is \textbf{true) else} $\delta_0$.
    \State $\whL = 2^t L_0$ \textbf{If} $(F_{L}$ is \textbf{true) else} $L_0$.
    \State $h = \frac{1}{4 \whL} \frac{1 - \wha}{(1 + \wha)^2}$.
    \State $\theta = \frac{1}{32 \whL} \frac{(1 - \wha)^2}{(1 + \wha)^4}$.
    \State $y = x - h \wtgg f(x)$.
    \State
    \Return $\left(\wha, \whdelta, \whL, \theta, y \right)$.
\EndFunction
\end{algorithmic}
\end{algorithm}

The first result controls the smallest true-gradient norm among the first $N$ iterates. It applies without convexity, although the true gradient norm is used only as an analytical measure and is not required by the algorithm.

\begin{theorem} \label{th:nonconvex_gd_adaptive}
    Let the function $f$ be $L$-smooth~\eqref{smooth cond}, $\wtgg f$ satisfies composite error condition~\eqref{noise condition}. Then Algorithm~\ref{alg:adaptive gd} provides convergence:
    \begin{eqnarray*}
        \min_{0 \leqslant k \leqslant N - 1} \|\nabla f(x^k) \|_2^2
        & \leqslant & 1024 \frac{M_*^3 L_0}{(1 - \alpha_0)^2 (1 - \alpha)^2} \frac{f(x^0) - f(x^*)}{N}
        \\
        & + & 512 \frac{M_*^5 \delta_0^2}{(1 - \alpha_0)^2 (1 - \alpha)^2}
    \end{eqnarray*}
    where
    \begin{equation*}
        \begin{gathered}
            J_* = \left\lceil \max \left \lbrace 0, \log_2\left( \frac{1 - \alpha_0}{1 - \alpha} \right), \log_2\left(\frac{L}{L_0} \right), \log_2\left(\frac{\delta}{\delta_0} \right) \right \rbrace \right\rceil,
            \\
            M_* = 2^{J_*}, \text{ assuming } \frac{0}{0} = 1.
        \end{gathered}
    \end{equation*}
    and the total number of inexact gradient $\wtgg f$ calls is bounded by $2N + J_* - 1$.
\end{theorem}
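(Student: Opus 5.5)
Our plan has three parts: prove a one-step descent lemma, show that the backtracking index stays bounded by $J_*$, and then telescope the accepted inequalities while counting oracle calls.

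\textbf{Descent lemma.} Fix $x$. Let $\wha \geqslant \alpha$, $\whL \geqslant L$, $\whdelta \geqslant \delta$, and set $h = \frac{1-\wha}{4\whL(1+\wha)^2}$. We will show that $y = x - h\wtgg f(x)$ satisfies the acceptance test. By $L$-smoothness~\eqref{smooth cond},
\[
f(y) \leqslant f(x) - h\langle \nabla f(x), \wtgg f(x)\rangle + \tfrac{L h^2}{2}\|\wtgg f(x)\|_2^2 .
\]
Write $g = \nabla f(x)$ and $\tilde g = \wtgg f(x)$. Condition~\eqref{noise condition} gives $\|\tilde g - g\|_2 \leqslant \alpha\|g\|_2 + \delta$. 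Hence
\[
\|\tilde g\|_2 \leqslant (1+\alpha)\|g\|_2 + \delta
\qquad\text{and}\qquad
\langle g, \tilde g\rangle \geqslant (1-\alpha)\|g\|_2^2 - \delta\|g\|_2 .
\]
Apply Young's inequality to $\delta\|g\|_2$ to obtain $\langle g, \tilde g\rangle \geqslant \tfrac{1-\alpha}{2}\|g\|_2^2 - \tfrac{\delta^2}{2(1-\alpha)}$. Then convert $\|g\|_2^2$ into $\|\tilde g\|_2^2$ via $\|\tilde g\|_2^2 \leqslant 2(1+\alpha)^2\|g\|_2^2 + 2\delta^2$. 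Together with $Lh \leqslant \frac{1-\wha}{4(1+\wha)^2}$, this should yield
\[
f(y) \leqslant f(x) - \theta\|\tilde g\|_2^2 + \frac{\whdelta^2}{4(1+\wha)^2\whL}.
\]
The constant $\frac{1}{32}$ in $\theta$ and the $\whdelta^2$ term must absorb the leftover $\delta^2$ terms. This part is delicate: the leftover terms carry factors such as $\frac{h}{1-\alpha}\delta^2$, and these must be dominated using $\alpha \leqslant \wha$, so that $\frac{1-\wha}{1-\alpha} \leqslant 1$. We expect matching these constants exactly to be the main obstacle. The first thing to try is a precise Young split with weight $\frac{1-\alpha}{2}$.

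\textbf{Bounded index.} At index $t \geqslant J_*$, all active estimates dominate the true parameters. Indeed, $1-\wha = (1-\alpha_0)2^{-t} \leqslant 1-\alpha$, $\whL \geqslant L$ and $\whdelta \geqslant \delta$. The inactive estimates dominate by the standing assumption on $F_\alpha, F_\delta, F_L$. So the while loop stops at some $J_k \leqslant \max(J_*, J_{k-1})$. Since $J$ starts at $0$ and is decremented by one after each iteration, induction gives $J_k \leqslant J_*$ at acceptance for every $k$. Consequently $\whL_k \leqslant M_* L_0$, $\whdelta_k \leqslant M_*\delta_0$, and $1-\wha_k \geqslant (1-\alpha_0)/M_*$, while $1+\wha_k \leqslant 2$.

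\textbf{Telescoping and call count.} Summing the accepted inequalities over $k = 0,\dots,N-1$ gives
\[
\sum_k \theta_k\|\wtgg f(x^k)\|_2^2 \leqslant f(x^0) - f^* + \sum_k \frac{\whdelta_k^2}{4(1+\wha_k)^2\whL_k}.
\]
We lower-bound $\theta_k \geqslant \frac{(1-\alpha_0)^2}{512\,M_*^3 L_0}$ and note that the $\delta$ term per step is at most $\frac{M_*^2\delta_0^2}{4L_0}$, using $\whL_k \geqslant L_0$. Next we relate true and inexact gradients. The bound $\|\tilde g\|_2 \geqslant (1-\alpha)\|g\|_2 - \delta$ gives $\|g\|_2^2 \leqslant \frac{2\|\tilde g\|_2^2 + 2\delta^2}{(1-\alpha)^2}$, and we use $\delta \leqslant M_*\delta_0$. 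Taking the minimum over $k$ and collecting terms gives the stated constants $1024 M_*^3 L_0$ and $512 M_*^5\delta_0^2$, up to adjusting constants.

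For the oracle count, the gradient $\wtgg f(x^k)$ is computed once per outer step, and each backtrack reuses it. The count $2N + J_* - 1$ is therefore generous; the paper likely counts calls per Step, and this can be bounded by the standard potential argument. The total number of increments is at most $J_* + (\text{number of decrements}) \leqslant J_* + N - 1$, so the number of Step calls is at most $N + J_* + N - 1$.
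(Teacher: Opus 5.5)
Your proposal is correct and follows the paper's proof step for step: descent under conservative estimates, $J_k\leqslant J_*$ by induction, telescoping with $\theta_k\geqslant(1-\alpha_0)^2/(512M_*^3L_0)$ and $\|\wtgg f(x^k)\|_2^2\geqslant\frac12(1-\alpha)^2\|\nabla f(x^k)\|_2^2-\delta^2$, and the increment/decrement count; carried out as you describe, it gives exactly the constants $1024M_*^3$ and $512M_*^5$ (since $\frac14+\frac{1}{512}\leqslant\frac12$), not merely ``up to adjusting''. The descent step you flag as delicate is a one-liner in the paper (Lemma~\ref{lemma:adaptive_gradient_step}): \eqref{noise condition} and \eqref{smooth cond} remain valid when $(\alpha,\delta,L)$ is replaced by any larger $(\wha,\whdelta,\whL)$ with $\wha<1$, so Lemma~\ref{lem:gd_inexact_step_inexact_bound} applies verbatim with the hatted parameters and yields the acceptance test directly; your mixed-parameter computation also works, but to compare the $\|\wtgg f\|_2^2$ coefficients it additionally needs that $a\mapsto(1-a)/(1+a)^2$ is decreasing on $[0,1)$.
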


The factor $M_*$ quantifies the scale of the initial underestimation of the parameters. Because the active estimates share a search index, underestimating one parameter may also enlarge the others. The stationarity
statement concerns a best iterate in gradient norm, not necessarily
the returned point $x^N$. Under the PL condition, the same one-step inequality yields a bound
on the last iterate in objective value.

\begin{theorem} \label{th:pl_gd_adaptive}
    Let the function $f$ be $L$-smooth~\eqref{smooth cond} and satisfies P$\L$ condition~\eqref{PL cond}, $\wtgg f$ satisfies composite error condition~\eqref{noise condition}. Then Algorithm~\ref{alg:adaptive gd} ensures the following convergence rate:
    \begin{eqnarray*}    
        f(x^N) - f^* & \leqslant & \left(1 - \frac{1}{512} (1 - \alpha_0)^2 (1 - \alpha)^2 \cdot M_*^{-3} \frac{\mu}{L_0} \right)^N (f(x^0) - f^*)
        \\
        & + & 256 \frac{M_*^5}{(1 - \alpha_0)^{2} (1 - \alpha)^{2}} \frac{\delta_0^2}{\mu},
    \end{eqnarray*} 
    where
    \begin{equation*}
        \begin{gathered}
            J_* = \left\lceil \max \left \lbrace 0, \log_2\left( \frac{1 - \alpha_0}{1 - \alpha} \right), \log_2\left(\frac{L}{L_0} \right), \log_2\left(\frac{\delta}{\delta_0} \right) \right \rbrace \right\rceil,
            \\
            M_* = 2^{J_*}, \text{ assuming } \frac{0}{0} = 1.
        \end{gathered}
    \end{equation*}
    and the total number of inexact gradient $\wtgg f$ calls is bounded by $2N + J_* - 1$.
\end{theorem}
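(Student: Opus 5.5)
We derive a one-step descent inequality in terms of the true gradient, bound the shared search index, and then iterate using the P\L condition.

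\textbf{Step 1: bound the search index.} We show that $J_k \leqslant J_*$ for every $k$, so that $\wha_k,\whdelta_k,\whL_k$ never exceed the values at index $J_*$. At index $t=J_*$ every active estimate dominates its true parameter: $1-\wha \leqslant 1-\alpha$, $\whL\geqslant L$ and $\whdelta\geqslant\delta$. Inactive estimates dominate by assumption.

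We then check that the backtracking condition holds automatically once all three estimates dominate. Write $g=\wtgg f(x)$ and use the decomposition \eqref{noise def abs rel}, so that $\nabla f = g-\zeta_a-\zeta_r$. Smoothness \eqref{smooth cond} gives
\[
f(y)\leqslant f(x)-h\langle \nabla f(x),g\rangle+\tfrac{Lh^2}{2}\|g\|_2^2 .
\]
The inequality $\|\nabla f\|_2 \leqslant (\|g\|_2+\delta)/(1-\alpha)$ controls $\zeta_r$ in terms of $\|g\|_2$ and $\delta$. Then $\langle\nabla f,g\rangle\geqslant \|g\|_2^2-\|\zeta\|_2\|g\|_2$, and Young's inequality yields $f(y)\leqslant f(x)-\theta\|g\|_2^2+\whdelta^2/(4(1+\wha)^2\whL)$ for the stated $h$ and $\theta$. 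I expect this Young-inequality bookkeeping with the constants $\tfrac14,\tfrac1{32}$ to be the main obstacle. It is only arithmetic, but it must work when $\wha$ plays the role of an upper bound on $\alpha$.

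Because $J_{k+1}=\max\{0,J_k-1\}$ and the search stops by index $J_*$, induction gives $J_k\leqslant J_*$. The oracle-call count $2N+J_*-1$ follows from a telescoping argument: each accepted step costs one call plus the increments of $J$, and the total number of increments is at most $N-1+J_*$ because the decrements number at most $N-1$.

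\textbf{Step 2: convert to the true gradient.} On acceptance we have $f(x^{k+1})\leqslant f(x^k)-\theta_k\|g_k\|_2^2+c_k\whdelta_k^2$. We bound $\theta_k$ from below by its worst value over $t\leqslant J_*$. Since $1-\wha\geqslant (1-\alpha_0)/M_*$, $\whL\leqslant M_*L_0$ and $(1+\wha)^4\leqslant 16$, this gives $\theta_k\geqslant (1-\alpha_0)^2/(512 M_*^3L_0)$. We bound $c_k\whdelta_k^2\leqslant M_*^2\delta_0^2/(4L_0)$.

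Next we lower bound $\|g_k\|_2^2$ via $\|g\|_2\geqslant (1-\alpha)\|\nabla f\|_2-\delta$, using $(a-b)^2\geqslant a^2/2-b^2$. This gives $\|g\|_2^2\geqslant \tfrac{(1-\alpha)^2}{2}\|\nabla f\|_2^2-\delta^2$, with $\delta\leqslant M_*\delta_0$ (and $\delta\leqslant\delta_0$ if $F_\delta$ is false). The factor $\tfrac12$ could also be absorbed more carefully if the constants require it. Collecting terms gives
\[
f(x^{k+1})\leqslant f(x^k)-\frac{(1-\alpha_0)^2(1-\alpha)^2}{1024\,M_*^3L_0}\|\nabla f(x^k)\|_2^2+C\frac{M_*^2\delta_0^2}{L_0}.
\]
This is the same one-step inequality that underlies Theorem~\ref{th:nonconvex_gd_adaptive}, and we reuse it directly.

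\textbf{Step 3: apply the P\L\ condition and unroll.} By \eqref{PL cond}, $\|\nabla f(x^k)\|_2^2\geqslant 2\mu(f(x^k)-f^*)$. Writing $r_k=f(x^k)-f^*$ and $q=\frac{(1-\alpha_0)^2(1-\alpha)^2\mu}{512M_*^3L_0}$, the one-step inequality becomes
\[
r_{k+1}\leqslant(1-q)\,r_k+C M_*^2\delta_0^2/L_0 .
\]
Unrolling gives $r_N\leqslant(1-q)^Nr_0+\frac{CM_*^2\delta_0^2}{L_0 q}$. The additive term equals $C'\,M_*^5\delta_0^2/((1-\alpha_0)^2(1-\alpha)^2\mu)$, which matches the claimed $256$ form once the constants are tracked.
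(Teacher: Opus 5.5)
Your overall structure matches the paper's: bound the shared index by $J_*$, derive the one-step inequality with coefficient $\frac{(1-\alpha_0)^2(1-\alpha)^2}{1024M_*^3L_0}$ and additive term $\frac{M_*^2\delta_0^2}{2L_0}$ (so $C=\tfrac12$), then apply P\L\ and sum the geometric series. Steps 2--3 close with the stated constants.

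The gap is in Step 1, where you must show that the backtracking test is passed once all estimates dominate. You expand around the inexact gradient, $\langle\nabla f,g\rangle\geqslant\|g\|_2^2-\|\zeta\|_2\|g\|_2$, with $\|\zeta\|_2\leqslant\frac{\alpha\|g\|_2+\delta}{1-\alpha}$. This gives only
\[
\langle\nabla f,g\rangle\geqslant\frac{1-2\alpha}{1-\alpha}\|g\|_2^2-\frac{\delta}{1-\alpha}\|g\|_2 .
\]
The quadratic coefficient is nonpositive for $\alpha\geqslant\tfrac12$, so no Young-inequality bookkeeping can produce $-\theta\|g\|_2^2$ with $\theta>0$. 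This is not a corner case. The theorem allows any $\alpha<1$, and you apply the bound with $\wha$ in place of $\alpha$. The search pushes $\wha=1-(1-\alpha_0)2^{-t}$ toward $1$; with $\alpha_0=0$ it already equals $\tfrac12$ at $t=1$. So your route does not establish that the inner loop stops by index $J_*$. Everything downstream rests on that claim: $J_k\leqslant J_*$, the lower bound on $\theta_k$, $\delta\leqslant M_*\delta_0$ being usable, and the oracle count. Calling this ``only arithmetic'' hides the fact that the inequality you chose is too weak.

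The loss comes from applying Cauchy--Schwarz to $\langle\zeta,g\rangle$ while also taking the worst case $\|g\|_2=(1-\alpha)\|\nabla f\|_2$. These cannot occur together: when $\zeta=-\alpha\nabla f$, $\zeta$ is antiparallel to $g$ and $\langle\nabla f,g\rangle$ exceeds $\|g\|_2^2$. The fix, which is what the paper does in Lemma~\ref{lem:dot_product_lb_inexact}, is to expand around the true gradient:
\[
\langle\nabla f,g\rangle=\|\nabla f\|_2^2+\langle\nabla f,\zeta\rangle\geqslant(1-\alpha)\|\nabla f\|_2^2-\delta\|\nabla f\|_2\geqslant\frac{1-\alpha}{2}\|\nabla f\|_2^2-\frac{\delta^2}{2(1-\alpha)} .
\]
Then use $\|\nabla f\|_2^2\geqslant\frac{\|g\|_2^2}{2(1+\alpha)^2}-\frac{\delta^2}{(1+\alpha)^2}$ to get $\langle\nabla f,g\rangle\geqslant\frac{1-\alpha}{4(1+\alpha)^2}\|g\|_2^2-\frac{\delta^2}{1-\alpha}$, valid for every $\alpha<1$. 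With $h=\frac{1-\alpha}{4L(1+\alpha)^2}$ this yields exactly $\theta$ and the $\frac{\delta^2}{4(1+\alpha)^2L}$ term (Lemma~\ref{lem:gd_inexact_step_inexact_bound}). Applying it with $(\wha,\whdelta,\whL)$, which are valid parameters once they dominate, completes Step 1.

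A minor point: to unroll $r_{k+1}\leqslant(1-q)r_k+b$ you need $0<q\leqslant1$. This follows from $\mu\leqslant L\leqslant M_*L_0$.
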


\begin{theorem} \label{th:pl_gd_non_adaptive}
    Let the function $f$ be $L$-smooth~\eqref{smooth cond} and satisfies P$\L$ condition~\eqref{PL cond}, $\wtgg f$ satisfies composite error condition~\eqref{noise condition}. Then Algorithm~\ref{alg:adaptive gd} with $\alpha = \alpha_0, L_0 = L, \delta_0 = \delta, F_{\alpha} = F_{\delta} = F_L = \textbf{false}$ ensures the following convergence rate:
    \begin{eqnarray*}    
        f(x^N) - f^* & \leqslant & \left(1 - \frac{1}{512} (1 - \alpha)^4 \frac{\mu}{L} \right)^N (f(x^0) - f^*)
        + 256 \frac{\delta^2}{(1 - \alpha)^{4}\mu},
    \end{eqnarray*}
\end{theorem}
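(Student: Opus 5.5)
Theorem~\ref{th:pl_gd_non_adaptive} follows from Theorem~\ref{th:pl_gd_adaptive} by specialization, but the cleaner route repeats its one-step argument with all switches off. With $F_\alpha=F_\delta=F_L=\textbf{false}$ and $\alpha_0=\alpha$, $\delta_0=\delta$, $L_0=L$, the routine Step returns the same values for every $t$: $\wha=\alpha$, $\whdelta=\delta$, $\whL=L$, $h=\frac{1-\alpha}{4L(1+\alpha)^2}$, $\theta=\frac{(1-\alpha)^2}{32L(1+\alpha)^4}$. So the first step is to show that the while-condition never triggers, i.e. for every $x$,
\begin{equation*}
f(x-h\wtgg f(x))\leqslant f(x)-\theta\|\wtgg f(x)\|_2^2+\frac{\delta^2}{4(1+\alpha)^2L}.
\end{equation*}
Then no backtracking happens, $J_k\equiv 0$, and exactly $N$ oracle calls are made.

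To prove this descent inequality, I apply $L$-smoothness~\eqref{smooth cond}:
\begin{equation*}
f(x-h\wtgg f)\leqslant f(x)-h\langle\nabla f,\wtgg f\rangle+\frac{Lh^2}{2}\|\wtgg f\|_2^2 .
\end{equation*}
Write $\nabla f=\wtgg f-\zeta$ with $\|\zeta\|_2\leqslant\alpha\|\nabla f\|_2+\delta$. From $\|\nabla f\|_2\leqslant\|\wtgg f\|_2+\|\zeta\|_2$ I get $\|\zeta\|_2\leqslant\alpha\|\wtgg f\|_2+\alpha\|\zeta\|_2+\delta$, hence $\|\zeta\|_2\leqslant\frac{\alpha\|\wtgg f\|_2+\delta}{1-\alpha}$. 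Plugging this into $-\langle\nabla f,\wtgg f\rangle\leqslant-\|\wtgg f\|^2+\|\zeta\|\|\wtgg f\|$ and using Young's inequality on the $\delta\|\wtgg f\|$ cross term leaves a negative multiple of $\|\wtgg f\|^2$ plus a multiple of $\delta^2$. The constants $h$ and $\theta$ appear designed so that the result is at most $-\theta\|\wtgg f\|^2+\frac{\delta^2}{4(1+\alpha)^2L}$. I expect this constant bookkeeping, in particular choosing the Young weight so that both the $\theta$ coefficient and the $\delta^2$ coefficient come out exactly, to be the main obstacle. This is presumably the lemma already used for Theorem~\ref{th:pl_gd_adaptive}, which I would invoke if available.

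Next I convert to the true gradient and apply PL. Using $\|\wtgg f\|_2\geqslant(1-\alpha)\|\nabla f\|_2-\delta$ and $(a-b)^2\geqslant a^2/2-b^2$, I get $\|\wtgg f\|^2\geqslant\frac{(1-\alpha)^2}{2}\|\nabla f\|^2-\delta^2$. The PL condition~\eqref{PL cond} gives $\|\nabla f\|^2\geqslant2\mu(f-f^*)$. Combining these, with $(1+\alpha)^4\leqslant16$ and $\theta\leqslant\frac{1}{32L}$, yields
\begin{equation*}
f(x^{k+1})-f^*\leqslant\Bigl(1-\frac{(1-\alpha)^4\mu}{512L}\Bigr)(f(x^k)-f^*)+C\frac{\delta^2}{L},
\end{equation*}
where $C$ is an absolute constant.

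Finally I unroll the recursion and sum the geometric series, which gives $\frac{C\delta^2/L}{(1-\alpha)^4\mu/(512L)}=\frac{512C\delta^2}{(1-\alpha)^4\mu}$. I then check that $512C\leqslant256$, or otherwise adjust the split in the conversion step. Equivalently, I can read off the result from Theorem~\ref{th:pl_gd_adaptive} with $J_*=0$, $M_*=1$, $\alpha_0=\alpha$, $L_0=L$, $\delta_0=\delta$, since $(1-\alpha_0)^2(1-\alpha)^2=(1-\alpha)^4$.
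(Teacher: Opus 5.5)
Your specialization remark at the start and end is exactly the paper's proof. With $\alpha_0=\alpha$, $L_0=L$, $\delta_0=\delta$, every logarithm in $J_*$ vanishes, so $J_*=0$, $M_*=1$, and Theorem~\ref{th:pl_gd_adaptive} gives the bound verbatim. That route is complete. The second half of your self-contained ``cleaner route'' is also fine: the additive constant is $C=\frac{1}{32}+\frac14=\frac{9}{32}$, so $512C=144\leqslant 256$.

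Your sketch of the descent inequality in that route, however, has a step that fails. Bounding $\|\zeta\|_2\leqslant\frac{\alpha\|\widetilde{\nabla} f\|_2+\delta}{1-\alpha}$ and then applying Cauchy--Schwarz gives
\begin{equation*}
-\langle\nabla f,\widetilde{\nabla} f\rangle\leqslant-\frac{1-2\alpha}{1-\alpha}\|\widetilde{\nabla} f\|_2^2+\frac{\delta}{1-\alpha}\|\widetilde{\nabla} f\|_2 .
\end{equation*}
The quadratic coefficient is nonnegative for $\alpha\geqslant\frac12$, so you get no descent at all. Even for smaller $\alpha$, the best Young weight recovers the exact pair $\theta$ and $\frac{\delta^2}{4(1+\alpha)^2L}$ only for $\alpha$ below roughly $0.35$; the theorem allows all $\alpha<1$. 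This is not just constant bookkeeping. With all switches off, Step ignores $t$, so unless the acceptance test is certified with exactly these constants, you cannot rule out an infinite while loop.

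The fix is to lower-bound the inner product through the true gradient first:
\begin{equation*}
\langle\nabla f,\widetilde{\nabla} f\rangle\geqslant(1-\alpha)\|\nabla f\|_2^2-\delta\|\nabla f\|_2\geqslant\frac{1-\alpha}{2}\|\nabla f\|_2^2-\frac{\delta^2}{2(1-\alpha)}.
\end{equation*}
Only then pass to $\|\widetilde{\nabla} f\|_2$ via $\|\nabla f\|_2^2\geqslant\frac{\|\widetilde{\nabla} f\|_2^2}{2(1+\alpha)^2}-\frac{\delta^2}{(1+\alpha)^2}$. This is Lemma~\ref{lem:dot_product_lb_inexact}. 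Combined with smoothness, it gives Lemma~\ref{lem:gd_inexact_step_inexact_bound} with exactly the constants of the acceptance test, for every $\alpha<1$.
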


For a convex objective that need not satisfy the PL condition, we apply
the method to the regularized function $f_\mu(x | x^0)=f(x)+\frac{\mu}{2}\|x-x^0\|_2^2$. Lemma~\ref{lemma:reg noise} explains the correctness of using the Algorithm~\ref{alg:adaptive gd} for such function.

\begin{theorem} \label{th:convex_reg_gd_adaptive}
    Let the function $f$ be $L$-smooth~\eqref{smooth cond} and convex~\eqref{convexity}, $\wtgg f$ satisfies relative error condition~\eqref{eq:relative noise condition}. Consider an arbitrary $0 < \mu, 0 < R_0$, then Algorithm~\ref{alg:adaptive gd} ensures the following convergence rate:
    \begin{eqnarray*}
        f(x^N) - f^* 
        & \leqslant & \left(1 - \frac{1}{512} (1 - \alpha_0)^2 (1 - \alpha)^2 M_*^{-3} \cdot \frac{\mu}{L_0} \right)^N (f(x^0) - f^*)
        \\
        & + &
        256\frac{M_*^5 \mu R_0^2}{(1 - \alpha_0)^{2} (1 - \alpha)^{2}} + \frac{\mu R^2}{2},
    \end{eqnarray*}
    where
    \begin{equation*}
        \begin{gathered}
            J_* = \left\lceil \max \left \lbrace 0, \log_2\left( \frac{1 - \alpha_0}{1 - \alpha} \right), \log_2\left(\frac{L + \mu}{L_0} \right), \log_2\left(\alpha \frac{R}{R_0} \right) \right \rbrace \right\rceil,
            M_* = 2^{J_*}.
        \end{gathered}
    \end{equation*}
    and the total number of inexact gradient $\wtgg f$ calls is bounded by $2N + J_* - 1$. And one can solve problem $f(x^N) - f^* \leqslant \varepsilon$, where $\varepsilon > 0$, choosing:
    \begin{eqnarray*}
        \mu        
        & = & \frac{\varepsilon}{\widehat{R}^2},
    \end{eqnarray*}
    then after:
    \begin{eqnarray*}
        N
        & = & \max \left\{ 0, \left\lceil \frac{512 M_{**}^3}{(1 - \alpha_0)^2(1 - \alpha)^2} \frac{L_0 \widehat{R}^2}{\varepsilon} \ln \left( \frac{3 (f(x^0) - f^*)}{\varepsilon} \right) \right\rceil \right \}
    \end{eqnarray*}
    iterations method will provide $f(x^N) - f^* \leqslant \varepsilon$, where
    \begin{eqnarray*}
        \widehat{R}^2 & = & \max \left\{ \frac{3R^2}{2}, \frac{768 M_{**}^5 R_0^2}{(1- \alpha_0)^2(1 - \alpha)^2} \right\}, \\
        J_{**} & = & \left\lceil \max \left \lbrace 0, \log_2\left( \frac{1 - \alpha_0}{1 - \alpha} \right), \log_2\left(\frac{2 L}{L_0} \right), \log_2\left(\alpha \frac{R}{R_0} \right) \right \rbrace \right\rceil,
        M_{**} = 2^{J_{**}}.
    \end{eqnarray*}
\end{theorem}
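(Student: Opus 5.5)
The plan is to reduce Theorem~\ref{th:convex_reg_gd_adaptive} to Theorem~\ref{th:pl_gd_adaptive} applied to the regularized function $f_\mu(x\,|\,x^0)=f(x)+\frac{\mu}{2}\|x-x^0\|_2^2$, and then to translate the resulting bound back to $f$.

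\textbf{Properties of $f_\mu$.} The function $f_\mu$ is $(L+\mu)$-smooth and $\mu$-strongly convex, so it satisfies the PL condition~\eqref{PL cond} with parameter $\mu$. The oracle we run is $\wtgg f_\mu(x)=\wtgg f(x)+\mu(x-x^0)$. Its error equals $\zeta(x)$, and under~\eqref{eq:relative noise condition} it is bounded by $\alpha\|\nabla f(x)\|_2$. Writing $\nabla f(x)=\nabla f_\mu(x)-\mu(x-x^0)$ turns this into a composite bound $\alpha\|\nabla f_\mu(x)\|_2+\alpha\mu\|x-x^0\|_2$.

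\textbf{Bounding the absolute part.} The absolute part is only uniform if the iterates stay in a ball around $x^0$. I expect Lemma~\ref{lemma:reg noise} to provide exactly this: along the run, $\|x^k-x^0\|_2\le R$, or a comparable quantity, so that the effective absolute error is $\delta=\alpha\mu R$. My argument for this would use the monotone decrease of $f_\mu$ forced by the acceptance test, together with strong convexity, to localize the iterates. This localization is the main obstacle. The acceptance condition contains the $+\whdelta_k^2$ slack, so it does not give monotonicity directly, and some care is needed to keep the iterates inside a level set of radius $O(R)$. If needed, I would instead use the decomposition from Lemma~\ref{noise decompose} pointwise at the iterates only, since the analysis of Theorem~\ref{th:pl_gd_adaptive} only queries the oracle at $x^k$.

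\textbf{Applying Theorem~\ref{th:pl_gd_adaptive}.} Set $\delta_0=\alpha_0$-independent, namely $\delta_0=\mu R_0$, and replace $L$ by $L+\mu$. Then $\log_2(\delta/\delta_0)=\log_2(\alpha R/R_0)$, which reproduces $J_*$. The theorem yields a bound on $f_\mu(x^N)-f_\mu^*$ consisting of a contraction factor times $f_\mu(x^0)-f_\mu^*$, plus $256M_*^5\mu^2R_0^2/((1-\alpha_0)^2(1-\alpha)^2\mu)$.

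\textbf{Returning to $f$.} We use three facts: $f(x^N)\le f_\mu(x^N)$; $f_\mu^*\le f_\mu(x^*)=f^*+\frac{\mu}{2}R^2$; and $f_\mu(x^0)-f_\mu^*\le f(x^0)-f^*$, which holds because $f_\mu^*\ge f^*$. Together these give the first display, and the count of oracle calls is inherited unchanged.

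\textbf{The $\varepsilon$-statement.} Take $\mu\le L$, so that $L+\mu\le 2L$ and $M_*\le M_{**}$. Choose $\mu=\varepsilon/\widehat R^2$. This makes each of the two additive terms at most $\varepsilon/3$: the choice of $\widehat R^2$ with the factors $3R^2/2$ and $768M_{**}^5R_0^2/(\cdots)$ is designed exactly for this. For the remaining term, use $(1-q)^N\le e^{-qN}$ and choose $N$ so that the contraction term is at most $\varepsilon/3$, which gives the stated $N$.

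One caveat is the case $\mu>L$. There $\varepsilon>L\widehat R^2\ge\frac{3}{2}LR^2\ge 3(f(x^0)-f^*)$ by $L$-smoothness, so the claim holds trivially with $N=0$. This explains the $\max\{0,\cdot\}$ in the formula for $N$.
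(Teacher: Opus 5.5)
\textbf{The gap.} It is the step you yourself flag as the main obstacle. You never establish that the regularized oracle $\wtgg f_\mu(x)=\wtgg f(x)+\mu(x-x^0)$ satisfies~\eqref{noise condition} with absolute level $\delta=\alpha\mu R$, and the localization route you sketch does not deliver it. Lemma~\ref{lemma:reg noise} is not a statement about the trajectory. Your argument would have to control $\alpha\mu\|x^k-x^0\|_2$ along the run, and this runs into several problems:
\begin{itemize}
\item The acceptance test permits increases of $f_\mu$, so there is no monotone decrease to use.
\item Any convergence bound you would use to localize (e.g.\ from Theorem~\ref{th:pl_gd_adaptive}) already presupposes a uniform error bound, so the argument is circular.
\item The natural level-set argument only gives $\|x^k-x_\mu^*\|_2^2\leqslant\frac{2}{\mu}(f_\mu(x^k)-f_\mu^*)$. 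Since $f_\mu(x^0)-f_\mu^*$ can be of order $LR^2$, this yields a radius of order $\sqrt{L/\mu}\,R$. The absolute error then becomes of order $\alpha\sqrt{L\mu}\,R$, and the residual term in the bound becomes of order $\alpha^2LR^2$. That residual does not vanish as $\mu\to0$, which destroys the $\varepsilon$-part.
\item $\|x^k-x^0\|_2\leqslant R$ is false in general under relative errors, because a sideways error component can push iterates slightly outside that ball. So even a refined localization would not reproduce the term $\log_2(\alpha R/R_0)$ in $J_*$ as stated.
\item Your fallback of applying Lemma~\ref{noise decompose} only at the iterates does not help. The absolute part $\alpha\mu\|x^k-x^0\|_2$ still needs a uniform bound for the backtracking level to stay below $J_*$.
\end{itemize}

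\textbf{The missing idea.} No trajectory control is needed: recenter at $x_\mu^*$ instead of $x^0$. Write
\[
\nabla f(x)=\bigl(\nabla f_\mu(x)-\mu(x-x_\mu^*)\bigr)+\mu(x^0-x_\mu^*).
\]
Strong convexity of $f_\mu$ gives $\langle\nabla f_\mu(x),x-x_\mu^*\rangle\geqslant\mu\|x-x_\mu^*\|_2^2$. Hence
\[
\|\nabla f_\mu(x)-\mu(x-x_\mu^*)\|_2^2\leqslant\|\nabla f_\mu(x)\|_2^2-\mu^2\|x-x_\mu^*\|_2^2\leqslant\|\nabla f_\mu(x)\|_2^2,
\]
while $\|x^0-x_\mu^*\|_2=R_\mu\leqslant R$ by Lemma~\ref{lemma:reg dist bound}. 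Therefore, for every $x\in\mathbb{R}^n$,
\[
\|\wtgg f_\mu(x)-\nabla f_\mu(x)\|_2\leqslant\alpha\|\nabla f(x)\|_2\leqslant\alpha\|\nabla f_\mu(x)\|_2+\alpha\mu R,
\]
which is exactly Lemma~\ref{lemma:reg noise}. With this global bound and $\delta_0=\mu R_0$, Theorem~\ref{th:pl_gd_adaptive} applies to $f_\mu$ (smoothness $L+\mu$, PL constant $\mu$) with precisely the stated $J_*$.

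The rest of your proposal matches the paper: the return to $f$, the bound $f_\mu(x^0)-f_\mu^*\leqslant f(x^0)-f^*$, and the $\varepsilon/3$ splitting with $M_*\leqslant M_{**}$. Your separate treatment of $\mu>L$ is in fact slightly more careful than the paper's. The paper obtains $\mu\leqslant L$ by tacitly assuming $\varepsilon\leqslant f(x^0)-f^*$, which the statement does not impose.
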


\begin{remark} \label{rem:delta_0=0_gd}
    It follows from the Theorem~\ref{th:nonconvex_gd_adaptive}-Theorem~\ref{th:pl_gd_non_adaptive} that, if $\delta_0 = 0$ is chosen, the inner loop may fail to terminate when $\delta > 0$. Therefore, the choice $\delta_0 = 0$ should be made only when it is known that there is no absolute error component in~\eqref{eq:absolute noise condition}.
\end{remark}

The proofs in Appendix~\ref{sec:appendix_adaptive_gd} establish a one-step inequality for every accepted update and then use telescoping or the PL condition, respectively. The finite backtracking bound follows from the existence of a search level at which all active estimates are conservative.
\section{Adaptive intermediate method} \label{sec:adaptive_inter_method}

We now turn to strongly convex objectives and seek faster convergence when the relative gradient error is sufficiently small. The Algorithm~\ref{alg ce-aigm} (CE-AIGM) combines extrapolation with a gradient step and a computable acceptance test. Its convergence bound interpolates between accelerated and non-accelerated condition-number dependence as the relative error parameter increases. Unlike Algorithm~\ref{alg:adaptive gd}, CE-AIGM adapts only the smoothness estimation, the parameter of strong-convexity $\mu$ is also known, whereas $L$ need not be.

\begin{algorithm}[H]
    \caption{ CE-AIGM (Composite Error Adaptive Intermediate Gradient Method).} 
    \label{alg ce-aigm}
    \begin{algorithmic}[1]
        \State
        \noindent {\bf Input:} Starting point $x^0$, number of steps $N \geqslant 1$.
        \State {\bf Input:} $\mu$ - strongly-convexity constant of function $f$.
        \State {\bf Input:} $\alpha$ - relative error of gradient,
        \State {\bf Input:} $\delta$ - absolute error of gradient,
        \State {\bf Input:} $L_0 \geqslant \mu$ - initial estimation of smoothness,
        \State {\bf Set} $J_0 = 0, c_0 = f(x^0), u^0 = x^0, A_0 = 0$.
        \For { $k = 0 \dots N - 1$}
            \State $y^k, u^{k + 1}, x^{k + 1}, c_{k + 1}, A_{k + 1} = \text{Step}(J_k, u^k, x^k, c_k, A_k, \mu, \alpha, \delta, L_0)$.
            \While {$f(x^{k + 1}) > c_{k + 1} + A_{k + 1}$}
                \State $J_k = J_k + 1$.
                \State $y^k, u^{k + 1}, x^{k + 1}, c_{k + 1}, A_{k + 1} = \text{Step}(J_k, u^k, x^k, c_k, A_k, \mu, \alpha, \delta, L_0)$.
            \EndWhile
           \State $J_{k + 1} = \max \{0, J_{k} - 1 \}$.
        \EndFor
        \State
        \noindent {\bf Output:} $x^N$.

    \State

    \Function{Step}{$t, u, x, c, A, \mu, \alpha, \delta, L_0$}
        \State $\whL = 2^t L_0$
        \State $\displaystyle h = \frac{1 - \alpha}{(1 + \alpha)^2} \frac{1}{4 \whL}$,
        \State $s = 1 - 2\alpha + 3\alpha^2, \quad m = 1 - 2\alpha - \alpha^2, \quad q = \displaystyle \frac{(1 - \alpha)^4}{(1 + \alpha)^4} \frac{\mu}{32 \whL}$,
        \State $\omega = \displaystyle \frac{(m - s) + \sqrt{(s - m)^2 +4mq }}{2 m}$,
        \State $\displaystyle y = \frac{\omega u + x}{1 + \omega}, \quad \widetilde{g} = \wtgg f(y), \quad \widehat{x} = y - h \widetilde{g}, \quad \widehat{u} = (1 - \omega) u + \omega y - \frac{2 \omega}{\mu} \widetilde{g}$,
        \State $\widehat{A} = (1 - \omega) A + \displaystyle \frac{\delta^2}{4 (1 + \alpha)^2 \whL} + \displaystyle \omega \frac{4\delta^2}{(1 - \alpha)^2 \mu}$,
        \State \begin{eqnarray*}
        \widehat{c} & = & (1 - \omega) c + \omega \left( f(y) - \frac{s}{(1 - \alpha)^2 \mu} \|\wtgg f(y) \|_2^2 - \frac{2\delta^2}{(1 - \alpha)^2 \mu}  \right) \\
        & + & \frac{\omega (1 - \omega) \mu}{4} \left \|y - u - \frac{2}{\mu} \widetilde{\nabla} f(y) \right \|_2^2,
        \end{eqnarray*}
        \State
        \Return $\left(y, \widehat{u}, \widehat{x}, \widehat{c}, \widehat{A} \right)$.
    \EndFunction
\end{algorithmic}
\end{algorithm}

\begin{theorem} \label{theorem:ce_aigm_convergence}
Let $f$ be an $L$-smooth and $\mu$-strongly convex function, $\widetilde{\nabla} f$ satisfies composite error condition~\eqref{noise condition}, $\alpha \in [0; 1/3]$. Then Algorithm~\ref{alg ce-aigm} with $L_0 \geqslant \mu$ generates $x^N$, s.t. 
\begin{eqnarray*}
    f(x^N) - f^*
    & \leqslant & \exp \left( - \frac{1}{250} \left( \frac{\mu}{L_*} \right)^{1 - 2\gamma^*} N \right) \left( f(x^0) - f^* + \frac{\mu}{4} R^2 \right)
    + 57 \frac{\delta^2}{\mu}.
\end{eqnarray*}
where:
\begin{equation*}
    \begin{gathered}
        R := \|x^0 - x^* \|_2, \gamma^* = \min \left \lbrace \frac{1}{4}, \log_{\frac{\mu}{L_*}} \left(3 \alpha \right) \right \rbrace, \text{ if } \mu = L_* \text{ or } \alpha = 0, \text{ then } \gamma^* = \frac{1}{4}.
        \\
        L_* = \max\limits_{0 \leqslant k \leqslant N - 1} \whL_k \leqslant \max \{ L_0, 2 L \},
        \\
        J_* = \left\lceil \max \left \lbrace 0, \log_2\left(\frac{L}{L_0} \right) \right \rbrace \right\rceil.
    \end{gathered}
\end{equation*}
The total number of iterations of the inner loop can be estimated as $N + J_* - 1$ a 
and the total number of inexact gradient $\wtgg f$ evaluations over $N$ iterations of Algorithm~\ref{alg ce-aigm} is estimated as $2N + J_* - 1$.
\end{theorem}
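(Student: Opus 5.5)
We prove Theorem~\ref{theorem:ce_aigm_convergence} with an estimating-sequence argument. The pair $(c_k, u^k)$ encodes a quadratic lower model $\psi_k(x) = c_k + \frac{\mu}{4}\|x - u^k\|_2^2$, and $A_k$ accumulates the absolute-error penalty. We maintain two invariants:
\begin{equation*}
\text{(I)}\ \ f(x^k) \leqslant c_k + A_k, \qquad \text{(II)}\ \ \psi_k(x^*) \leqslant (1-\omega_{k-1})\psi_{k-1}(x^*) + \omega_{k-1}\Bigl(f^* + \tfrac{2\delta^2}{(1-\alpha)^2\mu}\Bigr).
\end{equation*}
Invariant (I) is exactly the acceptance test of the inner loop, so it holds by construction at every accepted step. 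The substantive work is proving (II), together with showing that the test is satisfied once $\whL_k \geqslant L$.

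\textbf{Step 1: proving (II).} Apply strong convexity~\eqref{strong conv} at $y^k$ and split the error as in~\eqref{noise def abs rel}. Write $\nabla f(y) = \widetilde g - \zeta$ and use Young's inequality with weight $\frac{\mu}{2}$. The result is a lower quadratic model in terms of the computable $\widetilde g$:
\begin{equation*}
f(x) \geqslant f(y) + \langle \widetilde g, x-y\rangle + \tfrac{\mu}{4}\|x-y\|_2^2 - \tfrac{1}{\mu}\|\zeta\|_2^2 .
\end{equation*}
Bound $\|\zeta\|_2^2 \leqslant 2\alpha^2\|\nabla f(y)\|_2^2 + 2\delta^2$ and $\|\nabla f(y)\|_2 \leqslant \|\widetilde g\|_2 + \|\zeta\|_2$. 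After solving for $\|\zeta\|_2$ in terms of $\|\widetilde g\|_2$ (the $(1-\alpha)^{-2}$ factors appear here), the error term is at most $\frac{s'}{(1-\alpha)^2\mu}\|\widetilde g\|_2^2 + \frac{2\delta^2}{(1-\alpha)^2\mu}$. We then check that the paper's $s = 1-2\alpha+3\alpha^2$ dominates the resulting coefficient once the linear term is minimized. Taking the convex combination $(1-\omega)\psi_k + \omega\cdot(\text{this model})$ and minimizing the quadratic gives the center $\widehat u$ and the value $\widehat c$ exactly as in \textsc{Step}, including the cross term $\frac{\omega(1-\omega)\mu}{4}\|y-u-\frac{2}{\mu}\widetilde g\|_2^2$. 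Evaluating at $x^*$ then gives (II).

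\textbf{Step 2: termination of the inner loop.} For $\whL \geqslant L$, use Lemma-type descent bounds for the gradient step $\widehat x = y - h\widetilde g$, as in the proof of Theorem~\ref{th:nonconvex_gd_adaptive}. They give
\begin{equation*}
f(\widehat x) \leqslant f(y) - \theta\|\widetilde g\|_2^2 + \frac{\delta^2}{4(1+\alpha)^2\whL}, \qquad \theta = \frac{q}{\mu}\cdot\text{const}.
\end{equation*}
Separately, use convexity between $y$ and $x^k$ together with $y - x^k = \omega(u^k - y)$. This shows that $(1-\omega)(c_k + A_k) + \omega\,\bigl(f(y) - \dots\bigr)$ plus the cross term dominates $f(y)$, up to a residual involving $\|\widetilde g\|_2^2$ and the inner product $\langle \widetilde g, u-y\rangle$. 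The cross term absorbs that inner product after expanding the square. What remains is a coefficient of $\|\widetilde g\|_2^2$ of the form $\theta - \omega^2\cdot(\dots) - \omega(\dots)\frac{s}{\mu}$, and it must be nonnegative. This is where $\omega$ enters: it is the positive root of $m\omega^2 + (s-m)\omega - q = 0$, and that equation is precisely the vanishing of this coefficient. This is the main obstacle. The relative error produces terms linear in $\alpha$ of sign opposite to acceleration, and the algebra has to be arranged so the quadratic has coefficients $m$ and $s-m$. I would first verify the identity at $\alpha = 0$ (standard acceleration, $\omega \approx \sqrt{q}$) and then track the $\alpha$-terms, using $\alpha \leqslant \frac13$ to guarantee $m > 0$. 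Since $\whL$ doubles, the test passes by level $J_*$, which gives $L_* \leqslant \max\{L_0, 2L\}$. Because $J$ decreases by at most one per outer step, the call counts follow as in Theorem~\ref{th:nonconvex_gd_adaptive}.

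\textbf{Step 3: unrolling and rate.} Combine (I) and (II) with $f^* \leqslant \psi$-based bounds. Unrolling gives
\begin{equation*}
f(x^N) - f^* \leqslant \prod_k (1-\omega_k)\bigl(f(x^0) - f^* + \tfrac{\mu}{4}R^2\bigr) + \text{error},
\end{equation*}
where the error recursion $A_{k+1} = (1-\omega)A_k + \dots$ stays bounded by $\frac{1}{\omega}\frac{\delta^2}{4\whL} + \frac{4\delta^2}{(1-\alpha)^2\mu} + \frac{2\delta^2}{(1-\alpha)^2\mu}$. Since $\omega \geqslant q$ and $q \gtrsim \mu/\whL$, the first term is $O(\delta^2/\mu)$, and with $\alpha \leqslant \frac13$ the constants sum to at most $57\delta^2/\mu$. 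For the rate, lower-bound the root $\omega$ in the two regimes. If $s - m = 4\alpha^2 \lesssim \sqrt{q}$, then $\omega \gtrsim \sqrt{q} \sim (\mu/L_*)^{1/2}$. Otherwise $\omega \gtrsim q/(s-m) \sim \frac{\mu}{L_*\alpha^2}$. With $\gamma^*$ defined through $3\alpha = (\mu/L_*)^{\gamma^*}$, both regimes give $\omega \geqslant \frac{1}{250}(\mu/L_*)^{1-2\gamma^*}$. Finally, $1 - \omega \leqslant e^{-\omega}$ yields the stated bound.
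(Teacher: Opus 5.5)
Your architecture is the paper's. The model you build in Step~1 is exactly $\phi_y$ of Lemma~\ref{lower bound}, Step~2 is Lemma~\ref{lem:accelerated_step_condition}, and unrolling (II) gives Lemma~\ref{upper bound for lemma}. Your two-regime lower bound on the root is a valid substitute for the paper's contradiction argument: run with $L_*$, it gives about $\frac{1}{230}(\mu/L_*)^{1-2\gamma^*}$.

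The gap is the constant $57$, which your Step~3 estimates do not deliver. First, (II) carries a spurious term $\omega_{k-1}\frac{2\delta^2}{(1-\alpha)^2\mu}$. Your model already contains $-\frac{2\delta^2}{(1-\alpha)^2\mu}$ in its constant (that is how $\widehat c$ in \textsc{Step} is built), and it satisfies $\phi_{y^k}\leqslant f$ everywhere. So in fact $\psi_{k+1}(x^*)\leqslant(1-\omega_k)\psi_k(x^*)+\omega_k f^*$. Your version adds up to $\frac{2\delta^2}{(1-\alpha)^2\mu}=4.5\,\delta^2/\mu$ at $\alpha=1/3$. Second, bounding $\frac{\delta^2}{4(1+\alpha)^2\widehat{L}_k}$ through $\omega_k\geqslant q_k$ gives $\frac{8(1+\alpha)^2}{(1-\alpha)^4}\,\omega_k\frac{\delta^2}{\mu}$. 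At $\alpha=1/3$ this is $72\,\omega_k\delta^2/\mu$, or $128$ if the factor $(1+\alpha)^{-2}$ is dropped as you wrote it. Adding the $9\,\delta^2/\mu$ from $\frac{4\delta^2}{(1-\alpha)^2\mu}$, your three terms total about $85.5\,\delta^2/\mu$ at $\alpha=1/3$, even with that factor restored. So ``the constants sum to at most $57\delta^2/\mu$'' is false as argued.

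The missing ingredient is the sharper consequence of the defining equation, $q_k=s\,\omega_k-m\,\omega_k(1-\omega_k)\leqslant s\,\omega_k$, which uses $m>0$ and $\omega_k<1$. It must be applied per step, since $\omega_k$ and $\widehat{L}_k$ vary along the run; this works because $q_k\widehat{L}_k$ does not depend on $k$. It gives $b_k:=\frac{\delta^2}{4(1+\alpha)^2\widehat{L}_k}+\omega_k\frac{4\delta^2}{(1-\alpha)^2\mu}\leqslant\bigl(\frac{8(1+\alpha)^2 s}{(1-\alpha)^4}+\frac{4}{(1-\alpha)^2}\bigr)\omega_k\frac{\delta^2}{\mu}\leqslant 57\,\omega_k\frac{\delta^2}{\mu}$. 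Hence $A_N\leqslant 57\frac{\delta^2}{\mu}\sum_k\omega_k\prod_{j>k}(1-\omega_j)=57(1-\lambda_N)\frac{\delta^2}{\mu}$. Since $48+9=57$ is attained at $\alpha=1/3$, there is no room left for the extra term in (II), so both corrections are needed.

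Separately, Step~2 is still only a plan. The computation you defer is the lower bound $\frac{\mu}{4}\bigl\|u-y+\frac{2}{\mu}\widetilde{\nabla} f(y)\bigr\|_2^2\geqslant\langle\nabla f(y),u-y\rangle+\frac{m}{(1-\alpha)^2\mu}\|\widetilde{\nabla} f(y)\|_2^2-\frac{2\delta^2}{(1-\alpha)^2\mu}$, the second half of Lemma~\ref{accelerated evaluated quadratic estimation}. It cancels the true-gradient linear term from convexity and produces the coefficient $m$. Its $\omega(1-\omega)\frac{2\delta^2}{(1-\alpha)^2\mu}$, together with the $\omega\frac{2\delta^2}{(1-\alpha)^2\mu}$ inside $\phi_y$, totals $\omega(2-\omega)\frac{2\delta^2}{(1-\alpha)^2\mu}\leqslant\omega\frac{4\delta^2}{(1-\alpha)^2\mu}$. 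This is what the factor $4$ in $\widehat A$ has to cover.
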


\begin{theorem} \label{theorem:ce_aigm_convergence_non_adaptive}
Let $f$ be an $L$-smooth and $\mu$-strongly convex function, $\widetilde{\nabla} f$ satisfies composite error condition~\eqref{noise condition}, $\alpha \in [0; 1/3]$. Then Algorithm~\ref{alg ce-aigm} with $L_0 = L$ generates $x^N$, s.t. 
\begin{eqnarray*}
    f(x^N) - f^*
    & \leqslant & \exp \left( - \frac{1}{250} \left( \frac{\mu}{L} \right)^{1 - 2\gamma^*} N \right) \left( f(x^0) - f^* + \frac{\mu}{4} R^2 \right)
    + 57 \frac{\delta^2}{\mu}.
\end{eqnarray*}
where:
\begin{equation*}
    \begin{gathered}
        R := \|x^0 - x^* \|_2, \gamma^* = \min \left \lbrace \frac{1}{4}, \log_{\frac{\mu}{L}} \left(3 \alpha \right) \right \rbrace, \text{ if } \mu = L \text{ or } \alpha = 0, \text{ then } \gamma^* = \frac{1}{4}.
    \end{gathered}
\end{equation*}
\end{theorem}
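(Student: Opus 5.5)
The plan is to obtain Theorem~\ref{theorem:ce_aigm_convergence_non_adaptive} as a specialization of Theorem~\ref{theorem:ce_aigm_convergence}, taking $L_0 = L$. Two things need checking.

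\emph{The hypothesis $L_0 \geqslant \mu$.} This holds because strong convexity~\eqref{strong conv} and $L$-smoothness~\eqref{smooth cond} together force $\mu \leqslant L$. To see this, add the two inequalities for a pair $x \neq y$: the linear terms cancel and we get $\frac{\mu}{2}\|x-y\|_2^2 \leqslant \frac{L}{2}\|x-y\|_2^2$. Hence Theorem~\ref{theorem:ce_aigm_convergence} applies.

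\emph{Replacing $L_*$ by $L$ in the rate.} Theorem~\ref{theorem:ce_aigm_convergence} only gives $L_* \leqslant \max\{L_0, 2L\} = 2L$, which is too weak. We need $L_* = L$ exactly, i.e., that the backtracking loop never increments $J_k$. With $J_0=0$ and $J_{k+1}=\max\{0,J_k-1\}$, this makes every $\whL_k = L_0 = L$.

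To prove this, I would use the one-step lemma underlying Theorem~\ref{theorem:ce_aigm_convergence} in the Appendix. Its content should be that whenever $\whL \geqslant L$, the candidate produced by \textsc{Step} satisfies the acceptance test $f(x^{k+1}) \leqslant c_{k+1} + A_{k+1}$. This is exactly how the finiteness bound $J_*$ on the inner loop is derived: at $t = 0$ we have $\whL = L_0 = L \geqslant L$, so the test passes immediately. By induction on $k$, $J_k = 0$ for all $k$, and thus $L_* = \max_k \whL_k = L$. This is consistent with $J_* = \lceil \max\{0,\log_2(L/L_0)\}\rceil = 0$ in Theorem~\ref{theorem:ce_aigm_convergence}.

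It then remains to substitute $L_* = L$. This turns $\gamma^*$ into $\min\{\frac14, \log_{\mu/L}(3\alpha)\}$, with the same convention when $\mu = L$ or $\alpha = 0$, and the exponent into $-\frac{1}{250}(\mu/L)^{1-2\gamma^*}N$. The additive term $57\delta^2/\mu$ carries over unchanged.

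The main obstacle is the middle step. If the appendix lemma is stated only as ``the loop terminates once $\whL \geqslant 2L$'' (which is what the bound $\max\{L_0, 2L\}$ hints at), I would reopen its proof. The acceptance test should follow from the descent inequality $f(\widehat{x}) \leqslant f(y) - \theta\|\widetilde g\|_2^2 + \frac{\delta^2}{4(1+\alpha)^2\whL}$, with $\theta$ as in Algorithm~\ref{alg:adaptive gd}. That inequality uses only~\eqref{smooth cond} with $\whL \geqslant L$, since $h = \frac{1-\alpha}{4(1+\alpha)^2\whL} \leqslant \frac{1}{L}$, combined with the strong-convexity lower model built into $c_{k+1}$. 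Neither ingredient requires the factor $2$. If $2L$ were genuinely needed, the fallback is to run the argument with $L_* \leqslant 2L$ and absorb the constant into the exponent; but I expect $\whL \geqslant L$ to suffice.
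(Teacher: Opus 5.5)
Your proposal is correct and matches the paper's proof: with $L_0 = L$, the acceptance test passes at $t=0$ in every outer iteration, because Lemma~\ref{lem:accelerated_step_condition} (used in Lemma~\ref{lem:adaptive_step_condition}) only requires $\whL_k \geqslant L$. Hence $\whL_k = L_* = L$ for all $k$, and Theorem~\ref{theorem:ce_aigm_convergence} applies directly. Your fallback is not needed: the factor $2$ in $\max\{L_0, 2L\}$ only reflects possible overshoot of the doubling search when $L_0 < L$.
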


The dependence on the relative error can be written
without logarithms. Set $\kappa_*=L_*/\mu\geqslant1$. The exponent in
Theorem~\ref{theorem:ce_aigm_convergence} satisfies the identity
\begin{equation*}    
    \left(\frac{\mu}{L_*}\right)^{1-2\gamma^*}
    =\frac{1}{\max\{\sqrt{\kappa_*},\,9\alpha^2\kappa_*\}},
\end{equation*}
hence the accelerated condition-number dependence is retained when $\alpha\leqslant\frac13\kappa_*^{-1/4}$. Above this scale, the second term in the maximum determines the stated bound. At $\alpha=1/3$, it equals $\kappa_*$ and gives the non-accelerated condition-number dependence. Between these regimes, the estimate reflects the size of the relative error. For example, in the fixed-smoothness setting, with $\kappa=L/\mu$ and $\alpha=\frac13\kappa^{-\beta}$ for $0\leqslant\beta\leqslant1/4$, the factor governing the iteration count is $\kappa^{1-2\beta}$. The absolute error enters the objective bound through $\mathcal{O} \left(\delta^2/\mu \right)$, uniformly over the admissible range of $\alpha$, there is no additional amplification of this residual by the condition number, in comparison with the results obtained in~\cite{vasin2023accelerated}.

Analogously to Theorem~\ref{th:convex_reg_gd_adaptive}, a solution to the convex problem using a regularization technique can be proposed.

\begin{theorem} \label{theorem:convex_reg_aigm_adaptive}
    Let the function $f$ be $L$-smooth~\eqref{smooth cond} and convex~\eqref{convexity}, $\wtgg f$ satisfies relative error condition~\eqref{eq:relative noise condition} with $\alpha \in [0;1/3]$ and constant $R = \|x^0 - x^* \|_2$ is known. Consider an arbitrary $0 < \mu \leqslant L_0$, then Algorithm~\ref{alg ce-aigm} ensures the following convergence rate:
    \begin{eqnarray*}
        f(x^N) - f^*
        & \leqslant & \exp \left(-\frac{1}{250} \left( \frac{\mu}{L_*} \right)^{1 - 2\gamma^*} N \right) \left( f(x^0) - f^* + \frac{\mu}{4} R^2 \right)
        + \frac{39 \mu R^2}{2},
    \end{eqnarray*}
    where
    \begin{equation*}
        \begin{gathered}
            L_* = \max\limits_{0 \leqslant k \leqslant N - 1} \whL_k \leqslant \max \{ L_0, 2 (L + \mu) \},
            \\
            \gamma^* = \min \left \lbrace \frac{1}{4}, \log_{\frac{\mu}{L_*}} \left(3 \alpha \right) \right \rbrace, \text{ if } \alpha = 0 \text{ or } \mu = L_*, \text{ then } \gamma^* = \frac{1}{4}.
            \\
            J_* = \left\lceil \max \left \lbrace 0, \log_2\left(\frac{L + \mu}{L_0} \right) \right \rbrace \right\rceil.
        \end{gathered}
    \end{equation*}
    and the total number of inexact gradient $\wtgg f$ calls is bounded by $2N + J_* - 1$. And one can solve problem $f(x^N) - f^* \leqslant \varepsilon$, where $0 < \varepsilon \leqslant f(x^0) - f^*$, choosing (with $L_0 \geqslant \mu$):
    \begin{eqnarray*}
        \mu
        & = & \frac{\varepsilon}{39 R^2} \Longrightarrow \gamma^* = \min \left \lbrace \frac{1}{4}, \log_{\frac{\varepsilon}{39 L_* R^2}} \left(3 \alpha \right) \right \rbrace, \text{ if } \alpha = 0, \text{ then } \gamma^* = \frac{1}{4},
    \end{eqnarray*}
    then after:
    \begin{eqnarray*}
        N
        & = & \max \left\{ 0, \left\lceil 250 \left(\frac{39 L_* R^2}{\varepsilon} \right)^{1 - 2\gamma^*} \ln \left( \frac{157 (f(x^0) - f^*)}{78 \varepsilon} \right) \right\rceil \right \}.
    \end{eqnarray*}
    iterations method will provide $f(x^N) - f^* \leqslant \varepsilon$.
\end{theorem}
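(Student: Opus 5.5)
We would prove Theorem~\ref{theorem:convex_reg_aigm_adaptive} by reducing it to Theorem~\ref{theorem:ce_aigm_convergence}, applied to the regularized objective $f_\mu(x\,|\,x^0)=f(x)+\frac{\mu}{2}\|x-x^0\|_2^2$. The method runs on $f_\mu$ with the oracle $\wtgg f(x)+\mu(x-x^0)$. The function $f_\mu$ is $(L+\mu)$-smooth and $\mu$-strongly convex, which explains why $L_*\leqslant\max\{L_0,2(L+\mu)\}$ and $J_*$ involves $L+\mu$.

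\textbf{Step 1: the oracle error.} By Lemma~\ref{lemma:reg noise}, this oracle satisfies a composite condition for $f_\mu$. Its relative parameter is the same $\alpha$, and its absolute part is of order $\alpha\mu\|x-x^0\|_2$, because the error $\zeta(x)$ is bounded by $\alpha\|\nabla f(x)\|_2\leqslant\alpha\|\nabla f_\mu(x)\|_2+\alpha\mu\|x-x^0\|_2$. To obtain a uniform $\delta$, we need the iterates to stay in a ball around $x^0$ of radius $O(R)$. Granting this, $\delta\lesssim\alpha\mu R\leqslant\mu R/3$, and $57\delta^2/\mu$ becomes $c\,\mu R^2$. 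Since $R$ is known, $\delta$ can be passed to CE-AIGM. We expect the exact constant to come from that lemma's statement, which presumably gives $\delta=\alpha\mu R$ up to a factor.

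\textbf{Step 2: the contraction for $f_\mu$.} Theorem~\ref{theorem:ce_aigm_convergence} then gives
\[
f_\mu(x^N)-f_\mu^*\leqslant e^{-\frac{1}{250}(\mu/L_*)^{1-2\gamma^*}N}\Bigl(f_\mu(x^0)-f_\mu^*+\tfrac{\mu}{4}R_\mu^2\Bigr)+57\frac{\delta^2}{\mu}.
\]
We use the following facts:
\begin{itemize}
\item $f_\mu(x^0)=f(x^0)$ and $f_\mu^*\geqslant f^*$;
\item $R_\mu=\|x^0-x_\mu^*\|_2\leqslant R$, from the standard comparison $f(x_\mu^*)+\frac{\mu}{2}\|x_\mu^*-x^0\|^2\leqslant f(x^*)+\frac{\mu}{2}R^2$;
\item $f(x^N)-f^*\leqslant f_\mu(x^N)-f_\mu^*+\frac{\mu}{2}R^2$, because $f_\mu^*\leqslant f^*+\frac{\mu}{2}R^2$ and $f\leqslant f_\mu$.
\end{itemize}
Adding the $\delta$-term gives a residual $\bigl(\frac12+57c'\bigr)\mu R^2$, which we match to $\frac{39}{2}\mu R^2$. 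This requires $57c'=19$, i.e. $\delta^2\leqslant\mu^2R^2/3$, consistent with $\alpha\leqslant1/3$ and a radius bound of roughly $\sqrt3 R$.

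\textbf{Step 3: the $\varepsilon$-corollary.} Set $\mu=\varepsilon/(39R^2)$, so the residual is $\varepsilon/2$. The initial factor is then at most $f(x^0)-f^*+\varepsilon/156\leqslant\frac{157}{156}(f(x^0)-f^*)$, using $\varepsilon\leqslant f(x^0)-f^*$. Requiring the exponential term to be at most $\varepsilon/2$ yields $N\geqslant 250(\mu/L_*)^{-(1-2\gamma^*)}\ln\frac{157(f(x^0)-f^*)}{78\varepsilon}$, which matches the formula. The bound $2N+J_*-1$ on gradient calls is inherited directly from Theorem~\ref{theorem:ce_aigm_convergence}.

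\textbf{Main obstacle.} The hard step is justifying the uniform absolute-error bound in Step~1, i.e. that all query points $y^k$ stay within distance $O(R)$ of $x^0$. We would first try to extract this from the potential used in the CE-AIGM proof, which controls $\frac{\mu}{4}\|u^k-x_\mu^*\|^2$ and $f_\mu(x^k)-f_\mu^*$. These give $\|x^k-x_\mu^*\|,\|u^k-x_\mu^*\|\lesssim R$, and $y^k$ is a convex combination of $u^k$ and $x^k$. This argument is circular, because the potential bound itself uses $\delta$. We would handle it by induction on $k$, assuming the radius bound up to step $k$ and closing it with the $\delta$-dependent additive term. If that fails, we would instead use the cruder device of projecting onto the ball, or accept a larger numerical constant.
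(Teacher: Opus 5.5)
There is a genuine gap in Step~1. It is the one you flag yourself, but your proposed way around it would not work.

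\textbf{Why localization fails.} The CE-AIGM potential does not confine the iterates to a ball of radius $O(R)$. It gives
\[
\tfrac{\mu}{2}\|x^k-x_\mu^*\|_2^2\leqslant f_\mu(x^k)-f_\mu^*\leqslant f(x^0)-f^*+\tfrac{\mu}{4}R^2+A_k,
\]
and the bound on $u^k$ obtained through $\psi_k(x_\mu^*)$ has the same form. The problem is the term $f(x^0)-f^*$, which can be of order $LR^2$. For $\mu=\varepsilon/(39R^2)$ it is at least $39\mu R^2$ and may be far larger.

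So the best radius this route can deliver is of order $\sqrt{L/\mu}\,R$. That gives $\delta\sim\alpha\sqrt{\mu L}\,R$ and a residual $57\delta^2/\mu\sim\alpha^2 LR^2$, which is not $O(\mu R^2)$. Accepting ``a larger numerical constant'' would therefore mean a constant depending on $L/\mu$, and the $\varepsilon$-corollary breaks. Projecting onto a ball changes the algorithm, so it would not prove the statement as posed.

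\textbf{The missing idea.} No localization is needed, because Lemma~\ref{lemma:reg noise} is a global bound. The trick is to center at $x_\mu^*$ rather than at $x^0$. Since $\nabla f(x_\mu^*)=\mu(x^0-x_\mu^*)$, you can write
\[
\nabla f_\mu(x)=\bigl(\nabla f(x)-\nabla f(x_\mu^*)\bigr)+\mu(x-x_\mu^*).
\]
Monotonicity of $\nabla f$ then gives $\|\nabla f(x)-\nabla f(x_\mu^*)\|_2\leqslant\|\nabla f_\mu(x)\|_2$, hence
\[
\|\nabla f(x)\|_2\leqslant\|\nabla f_\mu(x)\|_2+\mu R_\mu\leqslant\|\nabla f_\mu(x)\|_2+\mu R
\]
for every $x\in\R^n$.

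Consequently the shifted oracle $\wtgg f+\mu(\cdot-x^0)$ satisfies~\eqref{noise condition} for $f_\mu$ everywhere with $\delta=\alpha\mu R$. This is the value passed to CE-AIGM, and it is where knowledge of $R$ is used. It gives
\[
57\delta^2/\mu=57\alpha^2\mu R^2\leqslant 19\mu R^2 \quad\text{for } \alpha\leqslant 1/3.
\]
The paper bounds this more crudely by $57\alpha\mu R^2$ and reaches the same constant $\frac{39}{2}$. With this in place, your Steps~2 and~3 are correct and coincide with the paper's argument, including the factor $\frac{157}{78}$ in the iteration count.
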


\begin{remark} \label{rem:long_tails}
    Using the \textit{LongTails} framework~\cite{longtails2026}, one can obtain analogous results and even improve upon them. Specifically, in the strongly convex case, this framework yields the convergence (for other method) stated in Theorem~\ref{theorem:ce_aigm_convergence_non_adaptive}: for $\alpha < 1$ and $\varepsilon \geqslant 136 \frac{\delta^2}{(1 - \alpha)^2 \mu}$, it ensures that $f(x^N) - f^* \leqslant \varepsilon$ after
    \begin{equation*}
        N =
        \left\lceil
        \left( 8 \sqrt{\frac{L}{\mu (1 - \alpha)}} + \frac{32 \alpha^2 L}{\mu (1 - \alpha)} \right) \log \left(1 + \frac{2 L R^2}{\varepsilon} \right)
        \right\rceil,
     \end{equation*}
    iterations. In the convex case ($\mu = 0$), this framework outperforms Theorem~\ref{theorem:convex_reg_aigm_adaptive} by reducing the logarithmic factor through a damped linear-coupling method (without regularization). For relative error~\eqref{eq:relative noise condition} ($\alpha < 1$), this method guarantees $f(x^N) - f^* \leqslant \varepsilon$ after
    \begin{equation*}
        N = \left\lceil
        7 \sqrt{\frac{LR^2}{(1 - \alpha) \varepsilon}} + \frac{11 \alpha^2 LR^2}{(1 - \alpha) \varepsilon}
        \right\rceil,
    \end{equation*}
    iterations. We invite readers to apply this framework to their own problem formulations and to verify for themselves that their convergence results can be improved.
\end{remark}

\section*{AI Use Statement}

The authors used GPT-6 to check the proofs and to improve the English text (spelling and grammar). All ideas and proofs were conceived without the assistance of artificial intelligence. No AI tools were used to generate mathematical content, ideas, or proofs. For information on the possibility of obtaining results using AI, see Remark~\ref{rem:long_tails}.

\bibliographystyle{sn/tfs}
\bibliography{literature}

\begin{appendix}

\section{Miscellaneous Used Lemmas} \label{sec:appendix_misc}

\begin{lemma} \label{fenchel}
    If $u, v \in \mathbb{R}^n$, then $\forall \lambda > 0$:
    \begin{equation*}
        \langle u, v \rangle \leqslant \frac{\lambda}{2} \|u \|_2^2 + \frac{1}{2 \lambda} \|v \|_2^2.
    \end{equation*}
\end{lemma}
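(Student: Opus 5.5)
The statement is the elementary Fenchel--Young (weighted Cauchy--Schwarz) inequality, and I will prove it directly by completing a square rather than appealing to any earlier result. The idea is to rescale the two vectors by $\sqrt{\lambda}$ and $1/\sqrt{\lambda}$ so that the weighted right-hand side becomes a plain sum of squares, and then use nonnegativity of a squared norm.

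Fix $\lambda>0$ and consider the vector $w=\sqrt{\lambda}\,u-\frac{1}{\sqrt{\lambda}}\,v$. I expand its squared norm using bilinearity and symmetry of the Euclidean inner product:
\begin{equation*}
0\leqslant \|w\|_2^2=\lambda\|u\|_2^2-2\langle u,v\rangle+\frac{1}{\lambda}\|v\|_2^2 .
\end{equation*}
Rearranging and dividing by $2$ gives exactly $\langle u,v\rangle\leqslant \frac{\lambda}{2}\|u\|_2^2+\frac{1}{2\lambda}\|v\|_2^2$.

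As an alternative route, I could first apply Cauchy--Schwarz, $\langle u,v\rangle\leqslant \|u\|_2\|v\|_2$, and then the scalar AM--GM inequality $ab\leqslant \frac{\lambda}{2}a^2+\frac{1}{2\lambda}b^2$ for $a,b\geqslant 0$. That scalar inequality is itself the same completed square $(\sqrt{\lambda}a-b/\sqrt{\lambda})^2\geqslant0$. There is no genuine obstacle here; the only point to check is that the choice of scaling $\sqrt{\lambda}$ versus $1/\sqrt{\lambda}$ makes the cross term come out as exactly $-2\langle u,v\rangle$, which it does.
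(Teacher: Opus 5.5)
Your completed-square argument is correct and complete: expanding $\|\sqrt{\lambda}\,u - \lambda^{-1/2} v\|_2^2 \geqslant 0$ yields exactly the claimed inequality for every $\lambda>0$. The paper states this lemma without proof, treating it as the standard Fenchel--Young inequality, so your argument supplies the routine justification it omits; your Cauchy--Schwarz plus scalar AM--GM alternative is equally valid.
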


\begin{lemma} \label{(a - b)2 lb}
    If $a, b \geqslant 0$, then
    \begin{equation*}
        (a - b)^2 \geqslant \frac{1}{2} a^2 - b^2.
    \end{equation*}
\end{lemma}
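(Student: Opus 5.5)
We need to prove $(a-b)^2 \geqslant \frac12 a^2 - b^2$ for $a,b\geqslant 0$ (the inequality actually holds for all real $a,b$). The plan is to expand the square and control the cross term with Lemma~\ref{fenchel}, or equivalently Young's inequality, in one dimension.

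First, expand the left-hand side:
\begin{equation*}
(a-b)^2 = a^2 - 2ab + b^2.
\end{equation*}
Next, bound the cross term. Applying Lemma~\ref{fenchel} with $u=a$, $v=b$ and $\lambda = 1/2$ gives
\begin{equation*}
2ab \leqslant \tfrac12 a^2 + 2b^2.
\end{equation*}
Substituting this into the expansion yields
\begin{equation*}
(a-b)^2 \geqslant a^2 - \tfrac12 a^2 - 2b^2 + b^2 = \tfrac12 a^2 - b^2,
\end{equation*}
which is the claim.

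As a cross-check, the same conclusion follows from the identity
\begin{equation*}
(a-b)^2 - \tfrac12 a^2 + b^2 = \tfrac12 (a - 2b)^2 \geqslant 0.
\end{equation*}

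The only nontrivial step is choosing the weight $\lambda = 1/2$ so that the coefficient of $a^2$ comes out to exactly $\frac12$. Nonnegativity of $a$ and $b$ is never used.
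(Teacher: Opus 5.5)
Your proof is correct and matches the paper's, which just applies Lemma~\ref{fenchel} with $\lambda = \frac{1}{2}$ to bound the cross term, $2ab \leqslant \frac12 a^2 + 2b^2$. Your identity $(a-b)^2 - \frac12 a^2 + b^2 = \frac12 (a-2b)^2 \geqslant 0$ is a valid cross-check, and you are right that nonnegativity of $a, b$ is never used.
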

\begin{proof}
    Using Fenchel~\ref{fenchel} inequality with $\lambda = \frac{1}{2}$.
\end{proof}

\begin{lemma} \label{basic alpha conditions}
    If $\widetilde{\nabla} f$ satisfies error condition~\eqref{noise condition}. Following inequalities take place:
    \begin{equation*}
        \begin{gathered}
            (1 - \alpha) \|\nabla f(x) \|_2 - \delta \leqslant \|\widetilde{\nabla} f(x) \|_2 \leqslant (1 + \alpha) \|\nabla f(x) \|_2 + \delta, \\
            \frac{1}{1 + \alpha} \left( \| \widetilde{\nabla} f(x) \|_2 - \delta \right) \leqslant \| \nabla f(x) \|_2 \leqslant \frac{1}{1 - \alpha} \left( \| \widetilde{\nabla} f(x) \|_2 + \delta \right), \\
            \frac{1}{2} (1 - \alpha)^2 \|\nabla f(x) \|_2^2 - \delta^2 \leqslant \|\wtgg f(x) \|_2^2 \leqslant 2 (1 + \alpha)^2 \|\nabla f(x) \|_2^2 + 2 \delta^2, \\
            \frac{1}{2(1 + \alpha)^2} \| \widetilde{\nabla} f(x) \|_2^2 - \frac{1}{(1 + \alpha)^2} \delta^2 \leqslant \|\nabla f(x) \|_2^2 \leqslant \frac{2}{(1 - \alpha)^2} \|\wtgg f(x) \|_2^2 + \frac{2}{(1 - \alpha)^2} \delta^2.
        \end{gathered}
    \end{equation*}
\end{lemma}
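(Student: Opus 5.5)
The plan is to obtain all four lines from the triangle inequality applied to the decomposition $\wtgg f(x) = \nabla f(x) + \zeta(x)$, where $\zeta(x) = \wtgg f(x) - \nabla f(x)$ satisfies $\|\zeta(x)\|_2 \leqslant \alpha \|\nabla f(x)\|_2 + \delta$ by~\eqref{noise condition}. The first line follows directly from
\begin{equation*}
\bigl|\, \|\wtgg f(x)\|_2 - \|\nabla f(x)\|_2 \,\bigr| \leqslant \|\zeta(x)\|_2 \leqslant \alpha\|\nabla f(x)\|_2 + \delta .
\end{equation*}
Rearranged in one direction it gives $(1 - \alpha)\|\nabla f(x)\|_2 - \delta \leqslant \|\wtgg f(x)\|_2$, and in the other it gives $\|\wtgg f(x)\|_2 \leqslant (1 + \alpha)\|\nabla f(x)\|_2 + \delta$.

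The second line is the same pair of inequalities solved for $\|\nabla f(x)\|_2$. The lower bound on $\|\wtgg f(x)\|_2$ yields $\|\nabla f(x)\|_2 \leqslant (\|\wtgg f(x)\|_2 + \delta)/(1-\alpha)$, and here $\alpha < 1$ is needed so that we divide by a positive quantity. The upper bound yields $\|\nabla f(x)\|_2 \geqslant (\|\wtgg f(x)\|_2 - \delta)/(1+\alpha)$.

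For the squared bounds, the upper estimates come from $(a+b)^2 \leqslant 2a^2 + 2b^2$, which is Lemma~\ref{fenchel} with $\lambda = 1$. Applying it to the first and second lines gives
\begin{equation*}
\|\wtgg f(x)\|_2^2 \leqslant 2(1+\alpha)^2\|\nabla f(x)\|_2^2 + 2\delta^2, \qquad \|\nabla f(x)\|_2^2 \leqslant \frac{2}{(1-\alpha)^2}\bigl(\|\wtgg f(x)\|_2^2 + \delta^2\bigr).
\end{equation*}

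The lower estimates use Lemma~\ref{(a - b)2 lb}, and this is the only step needing care, because the left-hand side $a - b$ may be negative, so the inequality cannot simply be squared. Take the case $a = (1-\alpha)\|\nabla f(x)\|_2$ and $b = \delta$.
\begin{itemize}
\item If $a \geqslant b$, both sides of $a - b \leqslant \|\wtgg f(x)\|_2$ are nonnegative, so squaring is legitimate. Lemma~\ref{(a - b)2 lb} then gives $\|\wtgg f(x)\|_2^2 \geqslant (a-b)^2 \geqslant \tfrac12 a^2 - b^2$.
\item If $a < b$, then $\tfrac12 a^2 - b^2 < 0 \leqslant \|\wtgg f(x)\|_2^2$, so the bound holds trivially.
\end{itemize}
Either way, $\|\wtgg f(x)\|_2^2 \geqslant \tfrac12(1-\alpha)^2\|\nabla f(x)\|_2^2 - \delta^2$.

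The remaining lower bound is handled identically with $a = \|\wtgg f(x)\|_2$ and $b = \delta$. From the second line, $(a-b)/(1+\alpha) \leqslant \|\nabla f(x)\|_2$, and the same case split gives $\|\nabla f(x)\|_2^2 \geqslant \frac{1}{(1+\alpha)^2}\bigl(\tfrac12 \|\wtgg f(x)\|_2^2 - \delta^2\bigr)$, which is exactly the stated inequality.
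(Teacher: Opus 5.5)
Your proof is correct and follows essentially the same route as the paper, which simply cites the error condition~\eqref{noise condition} (via the triangle inequality), Lemma~\ref{fenchel}, and Lemma~\ref{(a - b)2 lb}. Your case split on the sign of $a-b$ before squaring spells out the one detail that the paper's sketch leaves implicit.
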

\begin{proof}
    Follows from definition~\eqref{noise condition}, Lemma~\ref{fenchel} and Lemma~\ref{(a - b)2 lb}.
\end{proof}

\begin{lemma} \label{lem:composite error inexact estimation}
    If $\widetilde{\nabla} f$ satisfies error condition~\eqref{noise condition}, then:
    \begin{equation*}
        \| \wtgg f(x) - \nabla f(x) \|_2 \leqslant \frac{\alpha}{1 - \alpha} \|\wtgg f(x) \|_2 + \frac{\delta}{1 - \alpha}.
    \end{equation*}
\end{lemma}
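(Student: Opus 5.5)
The plan is to start from the pointwise triangle inequality in condition~\eqref{noise condition} and then invert the relation between the true and the inexact gradient norms. Write $\zeta(x) = \wtgg f(x) - \nabla f(x)$. By~\eqref{noise condition}, $\|\zeta(x)\|_2 \leqslant \alpha \|\nabla f(x)\|_2 + \delta$. The task is to replace $\|\nabla f(x)\|_2$ on the right-hand side by a bound in terms of the computable quantity $\|\wtgg f(x)\|_2$.

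For this I will use the second line of Lemma~\ref{basic alpha conditions}, namely $\|\nabla f(x)\|_2 \leqslant \frac{1}{1-\alpha}\left(\|\wtgg f(x)\|_2 + \delta\right)$. This is valid because $\alpha < 1$. It also follows directly from $\|\nabla f(x)\|_2 \leqslant \|\wtgg f(x)\|_2 + \|\zeta(x)\|_2 \leqslant \|\wtgg f(x)\|_2 + \alpha\|\nabla f(x)\|_2 + \delta$, after rearranging.

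Substituting this bound gives
\begin{equation*}
\|\zeta(x)\|_2 \leqslant \frac{\alpha}{1-\alpha}\|\wtgg f(x)\|_2 + \frac{\alpha\delta}{1-\alpha} + \delta = \frac{\alpha}{1-\alpha}\|\wtgg f(x)\|_2 + \frac{\delta}{1-\alpha},
\end{equation*}
since $\frac{\alpha}{1-\alpha} + 1 = \frac{1}{1-\alpha}$. This is exactly the claimed inequality.

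There is no real obstacle. The only point to check is that the coefficient simplification is exact, so no constant is lost, and that $1-\alpha > 0$ justifies dividing by $1-\alpha$ when rearranging.
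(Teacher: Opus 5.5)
Your proof is correct and follows the paper's argument exactly. You bound the error by $\alpha\|\nabla f(x)\|_2 + \delta$, replace $\|\nabla f(x)\|_2$ using the second line of Lemma~\ref{basic alpha conditions} (which you also derive directly), and note that the final simplification is an exact identity where the paper writes $\leqslant$.
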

\begin{proof}
    \begin{eqnarray*}
        \| \wtgg f(x) - \nabla f(x) \|_2 
        & \leqslant & \alpha \| \nabla f(x) \|_2 + \delta
        \overset{\text{Lemma}~\ref{basic alpha conditions}}{\leqslant} \frac{\alpha}{1 - \alpha} \left( \| \wtgg f(x) \|_2 + \delta \right) + \delta
        \\
        & \leqslant & \frac{\alpha}{1 - \alpha} \|\wtgg f(x) \|_2 + \frac{\delta}{1 - \alpha}.
    \end{eqnarray*}
\end{proof}

\begin{lemma} \label{noise decompose}
    If $\zeta(x) = \widetilde{\nabla} f(x) - \nabla f(x)$ satisfies error condition~\eqref{noise condition}, then exist two vector fields $\zeta_a, \zeta_r$:
    \begin{equation*}
        \zeta(x) = \zeta_a(x) + \zeta_r(x),
    \end{equation*}
    which satisfy:
    \begin{equation*}
        \| \zeta_a(x) \| \leqslant \delta, \hspace{5mm} \| \zeta_r(x) \|_2 \leqslant \alpha \| \nabla f(x) \|_2.
    \end{equation*}
\end{lemma}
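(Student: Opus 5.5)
The plan is a pointwise construction. Fix $x$ and set $\zeta=\zeta(x)$, $a=\|\zeta\|_2$. By~\eqref{noise condition}, $a\leqslant \alpha\|\nabla f(x)\|_2+\delta$. We split $\zeta$ along its own direction. Put $r=\min\{a,\alpha\|\nabla f(x)\|_2\}$. If $a=0$, set $\zeta_r(x)=\zeta_a(x)=0$. Otherwise define
\begin{equation*}
    \zeta_r(x)=\frac{r}{a}\,\zeta, \qquad \zeta_a(x)=\left(1-\frac{r}{a}\right)\zeta .
\end{equation*}
Then $\zeta_a(x)+\zeta_r(x)=\zeta(x)$ holds by construction, and $\|\zeta_r(x)\|_2=r\leqslant\alpha\|\nabla f(x)\|_2$.

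It remains to bound $\|\zeta_a(x)\|_2=a-r$. We split into two cases.
\begin{itemize}
    \item If $a\leqslant \alpha\|\nabla f(x)\|_2$, then $r=a$ and $\zeta_a(x)=0$.
    \item Otherwise $r=\alpha\|\nabla f(x)\|_2$, so $\|\zeta_a(x)\|_2=a-\alpha\|\nabla f(x)\|_2\leqslant\delta$ by~\eqref{noise condition}.
\end{itemize}
In both cases $\|\zeta_a(x)\|_2\leqslant\delta$. Since the construction is carried out independently at each $x$, it defines the two vector fields on $\mathbb{R}^n$.

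The converse direction, mentioned in the introduction as equivalence, is just the triangle inequality: $\|\zeta\|_2\leqslant\|\zeta_a\|_2+\|\zeta_r\|_2\leqslant\delta+\alpha\|\nabla f(x)\|_2$. I would add it as a one-line remark.

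There is no real obstacle. The only points needing care are the degenerate case $\zeta(x)=0$ (handled by setting both fields to zero) and the use of the $\min$ in the definition of $r$, which keeps $r/a\leqslant 1$ so the two pieces are nonnegative multiples of $\zeta$ and their norms add exactly to $a$. No measurability or continuity of the fields is claimed, so the pointwise definition suffices.
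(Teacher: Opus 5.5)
Your proof is correct and is the paper's argument made explicit. The paper asserts that the balls $S_1$ (center $\nabla f(x)$, radius $\alpha\|\nabla f(x)\|_2$) and $S_2$ (center $\widetilde{\nabla} f(x)$, radius $\delta$) intersect, and your point $\nabla f(x)+\zeta_r(x)=\widetilde{\nabla} f(x)-\zeta_a(x)$ on the segment between the centers is a witness of $S_1\cap S_2\neq\emptyset$, so your case analysis also supplies the justification for that nonempty intersection, which the paper leaves implicit.
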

\begin{proof}
    Let us define two sets:
    \begin{equation*}
        \begin{gathered}
            S_1 = \lbrace \nabla f(x) + v \; , \; \| v \|_2 \leqslant \alpha \| \nabla f(x) \|_2 \rbrace, \\
            S_2 = \lbrace \widetilde{\nabla} f(x) + v \; , \; \| v \|_2 \leqslant \delta \rbrace.
        \end{gathered}
    \end{equation*}
    Then from condition $\| \widetilde{\nabla} f(x) - \nabla f(x) \|_2 \leqslant \alpha \| \nabla f(x) \|_2 + \delta$ we can draw a conclusion about $S_1 \cap S_2 \not= \emptyset $ and the existence of two vectors $\zeta_r, \zeta_a$, described in theorem condition.
\end{proof}

\begin{lemma} \label{lem:dot_product_lb_inexact}
    If $\wtgg f$ satisfies composite error~\eqref{noise condition}, then:
    \begin{eqnarray*}
        (\forall x \in \mathbb{R}^n) \quad \langle \nabla f(x), \wtgg f(x) \rangle
        & \geqslant & \frac{1 - \alpha}{4 (1 + \alpha)^2} \| \wtgg f(x) \|_2^2  - \frac{\delta^2}{1 - \alpha}.
    \end{eqnarray*}
\end{lemma}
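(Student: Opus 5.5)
Write $g=\nabla f(x)$, $\widetilde g=\wtgg f(x)$, and use the decomposition of Lemma~\ref{noise decompose}: $\widetilde g = g+\zeta_r+\zeta_a$ with $\|\zeta_r\|_2\leqslant\alpha\|g\|_2$ and $\|\zeta_a\|_2\leqslant\delta$. The plan is to bound $\langle g,\widetilde g\rangle$ from below by a multiple of $\|g\|_2^2$ minus a $\delta$-term, and then to convert $\|g\|_2^2$ into $\|\widetilde g\|_2^2$ using Lemma~\ref{basic alpha conditions}.

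\textbf{Step 1: lower bound in terms of $\|g\|_2^2$.} Expanding the inner product gives
\begin{equation*}
\langle g,\widetilde g\rangle=\|g\|_2^2+\langle g,\zeta_r\rangle+\langle g,\zeta_a\rangle\geqslant(1-\alpha)\|g\|_2^2-\delta\|g\|_2 .
\end{equation*}
Apply Lemma~\ref{fenchel} to the last term as $\delta\|g\|_2\leqslant\frac{\lambda}{2}\|g\|_2^2+\frac{\delta^2}{2\lambda}$ with $\lambda=\frac{1-\alpha}{2}$. This yields
\begin{equation*}
\langle g,\widetilde g\rangle\geqslant\frac{3(1-\alpha)}{4}\|g\|_2^2-\frac{\delta^2}{1-\alpha}.
\end{equation*}
The absolute term is then exactly $\frac{\delta^2}{1-\alpha}$, as the statement requires.

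\textbf{Step 2: pass to $\|\widetilde g\|_2^2$.} Use the lower bound of the fourth line of Lemma~\ref{basic alpha conditions},
\begin{equation*}
\|g\|_2^2\geqslant\frac{1}{2(1+\alpha)^2}\|\widetilde g\|_2^2-\frac{\delta^2}{(1+\alpha)^2}.
\end{equation*}
Substituting produces a main term $\frac{3(1-\alpha)}{8(1+\alpha)^2}\|\widetilde g\|_2^2$, which already exceeds the claimed coefficient $\frac{1-\alpha}{4(1+\alpha)^2}$. It also produces an extra $\delta^2$ term, $-\frac{3(1-\alpha)}{4(1+\alpha)^2}\delta^2$.

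\textbf{Main obstacle: absorbing the extra $\delta^2$ term.} A naive Step 2 gives the total absolute loss $\frac{\delta^2}{1-\alpha}+\frac{3(1-\alpha)\delta^2}{4(1+\alpha)^2}$, which is larger than the claimed $\frac{\delta^2}{1-\alpha}$. The fix is to retune the two steps together. Only the portion $\frac{1-\alpha}{2}\|g\|_2^2$ needs to be converted, since that alone gives the coefficient $\frac{1-\alpha}{4(1+\alpha)^2}$. That conversion costs $\frac{(1-\alpha)\delta^2}{2(1+\alpha)^2}$. The remaining slack $\frac{1-\alpha}{2}\|g\|_2^2$ is used in Step 1 with $\lambda=1-\alpha$, giving $\delta\|g\|_2\leqslant\frac{1-\alpha}{2}\|g\|_2^2+\frac{\delta^2}{2(1-\alpha)}$. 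The total absolute loss is then
\begin{equation*}
\frac{\delta^2}{2(1-\alpha)}+\frac{(1-\alpha)\delta^2}{2(1+\alpha)^2}\leqslant\frac{\delta^2}{1-\alpha},
\end{equation*}
where the inequality holds because $(1-\alpha)^2\leqslant(1+\alpha)^2$. This completes the argument.
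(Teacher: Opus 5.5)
Your final argument is correct and is essentially the paper's proof: Lemma~\ref{fenchel} with $\lambda = 1-\alpha$ gives $\frac{1-\alpha}{2}\|\nabla f(x)\|_2^2 - \frac{\delta^2}{2(1-\alpha)}$, then the fourth line of Lemma~\ref{basic alpha conditions} converts to $\|\wtgg f(x)\|_2^2$, and $(1-\alpha)^2 \leqslant (1+\alpha)^2$ merges the two $\delta^2$ terms. The choice $\lambda = \frac{1-\alpha}{2}$ is a dead end, as you noticed, so a clean write-up should start directly with $\lambda = 1-\alpha$.
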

\begin{proof}
    \begin{eqnarray*}
        \langle \nabla f(x), \wtgg f(x) \rangle
        & \overset{\text{Lemma}~\ref{noise decompose}}{=} & \langle \nabla f(x) + \zeta_a(x) + \zeta_r(x), \nabla f(x) \rangle
        \\
        & \overset{\text{Lemma}~\ref{fenchel}, \eqref{noise def abs rel}}{\geqslant} & \| \nabla f(x) \|_2^2 - \alpha \| \nabla f(x) \|_2^2 - \frac{\delta^2}{2 \lambda} - \frac{\lambda}{2} \|\nabla f(x) \|_2^2
        \\
        & \overset{\lambda = 1 - \alpha}{\geqslant} & \frac{1 - \alpha}{2} \|\nabla f(x) \|_2^2 - \frac{\delta^2}{2 (1 - \alpha)}
        \\
        & \overset{\text{Lemma}~\ref{basic alpha conditions}}{\geqslant} & \frac{1 - \alpha}{2} \left( \frac{1}{2 (1 + \alpha)^2} \| \wtgg f(x) \|_2^2 - \frac{\delta^2}{(1 + \alpha)^2} \right) 
        \\
        & - & \frac{\delta^2}{2 (1 - \alpha)}
        \\
        & = & \frac{1 - \alpha}{4 (1 + \alpha)^2} \| \wtgg f(x) \|_2^2 - \frac{1 - \alpha}{2 (1 + \alpha)^2} \delta^2 - \frac{1}{2 (1 - \alpha)} \delta^2
        \\
        & \overset{\nicefrac{1 - \alpha}{1 + \alpha} \leqslant 1}{\geqslant} & \frac{1 - \alpha}{4 (1 + \alpha)^2} \| \wtgg f(x) \|_2^2  - \frac{\delta^2}{1 - \alpha}.
    \end{eqnarray*}
\end{proof}

\begin{lemma} \label{pl dist}
    Let $f$ satisfies strong convexity~\eqref{strong conv}, then:
    \begin{equation*}
        \|x - x^* \|_2 \leqslant \frac{1}{\mu} \| \nabla f(x) \|_2.
    \end{equation*}
\end{lemma}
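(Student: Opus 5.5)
The natural route is to combine strong convexity with the first-order optimality condition $\nabla f(x^*) = 0$. Since $f$ is $\mu$-strongly convex and differentiable, $x^*$ exists and is unique, and $\nabla f(x^*)=0$. We write inequality~\eqref{strong conv} twice, once with the pair $(x, x^*)$ and once with the roles swapped:
\begin{equation*}
    f(x) + \langle \nabla f(x), x^* - x \rangle + \frac{\mu}{2}\|x - x^*\|_2^2 \leqslant f(x^*),
\end{equation*}
\begin{equation*}
    f(x^*) + \langle \nabla f(x^*), x - x^* \rangle + \frac{\mu}{2}\|x - x^*\|_2^2 \leqslant f(x).
\end{equation*}
Adding the two inequalities cancels the function values. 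Using $\nabla f(x^*) = 0$, this gives
\begin{equation*}
    \mu \|x - x^*\|_2^2 \leqslant \langle \nabla f(x), x - x^* \rangle,
\end{equation*}
which is strong monotonicity of the gradient evaluated at the minimizer.

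Next, bound the right-hand side by Cauchy--Schwarz: $\langle \nabla f(x), x - x^*\rangle \leqslant \|\nabla f(x)\|_2 \|x - x^*\|_2$. If $x = x^*$, the claim is trivial. Otherwise divide by $\|x - x^*\|_2 > 0$ to obtain $\mu\|x - x^*\|_2 \leqslant \|\nabla f(x)\|_2$, which is the statement.

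No step here is a real obstacle. The only points to check are that the minimizer exists, which is assumed in the setup and in any case follows from strong convexity, and that it satisfies the stationarity condition $\nabla f(x^*) = 0$; the division step needs the trivial case handled separately.
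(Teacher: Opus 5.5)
Your proof is correct, but it takes a different route from the paper. The paper bounds the objective gap from both sides. Quadratic growth gives $\frac{\mu}{2}\|x-x^*\|_2^2 \leqslant f(x)-f^*$, from~\eqref{strong conv} at $x^*$ with $\nabla f(x^*)=0$. Gradient dominance gives $f(x)-f^* \leqslant \frac{1}{2\mu}\|\nabla f(x)\|_2^2$; the paper only asserts that this follows from strong convexity, and it is obtained by minimizing the right-hand side of~\eqref{strong conv} over $y$. The paper then takes square roots.

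You instead add the two instances of~\eqref{strong conv} to get strong monotonicity of the gradient at the minimizer, $\mu\|x-x^*\|_2^2 \leqslant \langle \nabla f(x), x-x^*\rangle$, and finish with Cauchy--Schwarz. Your argument is fully self-contained: it does not need gradient dominance as an intermediate fact, and function values appear only to cancel. It therefore works for any strongly monotone gradient-type map, not just gradients of functions.

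The paper's argument passes through $f(x)-f^*$. That gives, as a by-product, a two-sided bound on the objective gap. It also extends to settings where only these two function-value inequalities are available, such as non-convex functions with quadratic growth and~\eqref{PL cond}, with $x^*$ replaced by the nearest minimizer. Both routes give the same constant $1/\mu$.
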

\begin{proof}
    From strong convexity and gradient dominance condition (which follows from the strong convexity):
    \begin{equation*}
        \begin{gathered}    
            \frac{\mu}{2} \|x - x^* \|_2^2 \leqslant f(x) - f^* \leqslant \frac{1}{2\mu} \|\nabla f(x) \|_2^2, \quad \|x - x^* \|_2 \leqslant \frac{1}{\mu} \|\nabla f(x) \|_2.
        \end{gathered}
    \end{equation*}
\end{proof}

\begin{lemma} \label{lem:gd_inexact_step_inexact_bound}
    Let $f$ be $L$-smooth~\eqref{smooth cond}, $\widetilde{\nabla} f$ satisfies composite error condition~\eqref{noise condition}. If variables $x, y$ are related by the following relation $x = y - h \wtgg f(y)$, where:
    \begin{equation} \label{gd const def}
        \begin{gathered}
            h = \frac{1 - \alpha}{(1 + \alpha)^2} \frac{1}{4 L},
        \end{gathered}
    \end{equation}
    then:
    \begin{equation*}
        f(x) \leqslant f(y) - \frac{(1 - \alpha)^2}{(1 + \alpha)^4} \frac{1}{32 L} \|\wtgg f(y) \|_2^2 + \frac{\delta^2}{4 (1 + \alpha)^2 L}.
    \end{equation*}
\end{lemma}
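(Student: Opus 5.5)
Apply the quadratic upper model \eqref{smooth cond} at the base point $y$ with the step $x - y = -h\wtgg f(y)$:
\begin{equation*}
    f(x) \leqslant f(y) - h\langle \nabla f(y), \wtgg f(y)\rangle + \frac{L h^2}{2}\|\wtgg f(y)\|_2^2 .
\end{equation*}
The linear term is the only place where the inexactness matters. I would bound it with Lemma~\ref{lem:dot_product_lb_inexact}, which gives
\begin{equation*}
    -h\langle \nabla f(y), \wtgg f(y)\rangle \leqslant -h\frac{1-\alpha}{4(1+\alpha)^2}\|\wtgg f(y)\|_2^2 + h\frac{\delta^2}{1-\alpha}.
\end{equation*}

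Next substitute $h = \frac{1-\alpha}{4(1+\alpha)^2 L}$ and write $c = \frac{1-\alpha}{4(1+\alpha)^2}$, so that $h = c/L$. The coefficient of $\|\wtgg f(y)\|_2^2$ becomes
\begin{equation*}
    -\frac{c^2}{L} + \frac{c^2}{2L} = -\frac{c^2}{2L} = -\frac{(1-\alpha)^2}{32(1+\alpha)^4 L},
\end{equation*}
which is exactly the stated descent coefficient. The absolute-error term becomes
\begin{equation*}
    h\frac{\delta^2}{1-\alpha} = \frac{\delta^2}{4(1+\alpha)^2 L},
\end{equation*}
again exactly the stated term, because the factor $1-\alpha$ in $h$ cancels the $1/(1-\alpha)$ from Lemma~\ref{lem:dot_product_lb_inexact}.

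So the proof is a direct three-line computation. The only real content is the inner-product lower bound, which is already available as Lemma~\ref{lem:dot_product_lb_inexact}. The one step that needs care is checking the algebra of the two cancellations above; no other obstacle is expected.
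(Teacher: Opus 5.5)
Your proposal is correct and essentially matches the paper's proof: apply the smoothness upper model at $y$, bound the inner product with Lemma~\ref{lem:dot_product_lb_inexact}, and substitute $h$. This turns the coefficient $h\bigl(\frac{Lh}{2} - \frac{1-\alpha}{4(1+\alpha)^2}\bigr)$ into $-\frac{(1-\alpha)^2}{32(1+\alpha)^4 L}$ and cancels the factor $1-\alpha$ in the $\delta^2$ term, and your algebra for both steps is right.
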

\begin{proof}
Using smoothness of function $f$ we get:
\begin{equation*}
    f(x) \leqslant f(y) + \langle \nabla f(y), x - y \rangle + \frac{L}{2} \| x - y \|_2^2.
\end{equation*}
Let us estimate dot product:
\begin{eqnarray*}
    \langle \nabla f(y), x - y \rangle 
    & = & -h \langle \nabla f(y), \wtgg f(y) \rangle
    \overset{\text{Lemma}~\ref{lem:dot_product_lb_inexact}}{\leqslant} 
    -h \bigg( \frac{1 - \alpha}{4 (1 + \alpha)^2} \| \wtgg f(y) \|_2^2  - \frac{\delta^2}{1 - \alpha} \bigg).
\end{eqnarray*}
Then:
\begin{eqnarray}
    f(x) 
    & \leqslant & f(y) \nonumber
    - h \left( \frac{1 - \alpha}{4 (1 + \alpha)^2} \| \wtgg f(y) \|_2^2  - \frac{\delta^2}{1 - \alpha} \right) + \frac{L h^2}{2} \|\wtgg f(y) \|_2^2 \\ \nonumber
    & = & f(y) + h \|\wtgg f(y) \|_2^2 \left(\frac{L h}{2} - \frac{1 - \alpha}{4 (1 + \alpha)^2} \right) +h \frac{\delta^2}{1 - \alpha} \nonumber \\
    & \overset{h \text{ definition}}{=} & f(y) - \frac{1 - \alpha}{(1 + \alpha)^2} \frac{1}{4 L} \frac{1 - \alpha}{8(1 + \alpha)^2} \|\wtgg f(y) \|_2^2 + \frac{\delta^2}{4 (1 + \alpha)^2 L} \\ \label{h gd step ineq}
    & = & f(y) - \frac{(1 - \alpha)^2}{(1 + \alpha)^4} \frac{1}{32 L} \|\wtgg f(y) \|_2^2 + \frac{\delta^2}{4 (1 + \alpha)^2 L}
\end{eqnarray}
\end{proof}

\section{Regularization} \label{sec:appendix_regularization}

Let us consider convex~\eqref{convexity} function $f$.

\begin{equation} \label{reg def}
    \begin{gathered}
        f_{\mu}(x | x^0) = f(x) + \frac{\mu}{2} \|x - x^0 \|_2^2,  \mu > 0, \\
        x_\mu^* \text{ - unique solution for problem: } f_{\mu}(\cdot | x^0) \to \min\limits_{x \in \mathbb{R}^n}, \\
        f_{\mu}^* = f_{\mu}(x_{\mu}^* | x^0), \quad R_\mu = \|x_{\mu}^* - x^0 \|_2, \\
        L_{\mu} = L + \mu, \quad
        \wtgg f_{\mu}(x) = \wtgg f(x) + \mu (x - x^0).
    \end{gathered}
\end{equation}

\begin{lemma} \label{lemma:reg dist bound}
Let $f_{\mu}$ be a regularization function for $f$. Then\; $R_{\mu} \leqslant R.$
\end{lemma}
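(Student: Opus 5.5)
The plan is to compare $x_\mu^*$ with $x^*$ through the definition of $x_\mu^*$ as a minimizer, and then use convexity of $f$ to get rid of the objective terms. Recall that $x^*$ is a minimizer of $f$ closest to $x^0$, so $R=\|x^0-x^*\|_2$, and that $x_\mu^*$ is the unique minimizer of the $\mu$-strongly convex function $f_\mu(\cdot\,|\,x^0)$.

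First, I will use the optimality of $x_\mu^*$ for $f_\mu(\cdot\,|\,x^0)$ by evaluating the regularized objective at the competitor $x^*$:
\begin{equation*}
    f(x_\mu^*) + \frac{\mu}{2}\|x_\mu^* - x^0\|_2^2 \leqslant f(x^*) + \frac{\mu}{2}\|x^* - x^0\|_2^2 .
\end{equation*}

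Second, since $x^*$ minimizes $f$, we have $f(x^*) \leqslant f(x_\mu^*)$. Subtracting this from the previous inequality gives
\begin{equation*}
    \frac{\mu}{2} R_\mu^2 \leqslant \frac{\mu}{2} R^2 .
\end{equation*}
Dividing by $\mu/2>0$ and taking square roots yields $R_\mu \leqslant R$.

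This argument does not need the closest-point property of $x^*$ (any minimizer would do), nor smoothness. Convexity is used only to guarantee that $x_\mu^*$ exists and is unique, which holds because $f_\mu(\cdot\,|\,x^0)$ is strongly convex. I do not expect a real obstacle. The only point to check is that $x^*$ attains $f^*$, which is assumed in the paper's setup.
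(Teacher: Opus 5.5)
Your proof is correct and is essentially the paper's argument: compare $f_\mu(\cdot\,|\,x^0)$ at $x_\mu^*$ and at the competitor $x^*$, then cancel the objective terms using $f^*\leqslant f(x_\mu^*)$. One small wording point: your opening sentence credits convexity with this cancellation, but, as your own argument shows, it is the minimality of $x^*$ that does the work, and convexity is needed only for the existence of $x_\mu^*$.
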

\begin{proof}
We have
\begin{equation*}
    f^* + \frac{\mu}{2} R_{\mu}^2 \leqslant f(x_{\mu}^*) + \frac{\mu}{2} R_{\mu}^2 = f_{\mu}^* \leqslant f_{\mu}(x^* | x^0) = f(x^*) + \frac{\mu}{2} R^2. 
\end{equation*}
So, $R_{\mu}^2 \leqslant R^2.$
\end{proof}

\begin{lemma} \label{lemma:reg func bound}
Let $f_{\mu}$ be a regularization function for $f$. Then\; $\forall x \in \mathbb{R}^n$:
\begin{equation*}
    f(x) - f^* \leqslant f_{\mu}(x | x^0) - f^*_{\mu} + \frac{\mu}{2} R^2.
\end{equation*}
\end{lemma}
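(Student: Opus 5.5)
The inequality follows from two elementary comparisons between $f$ and its regularization $f_{\mu}(\cdot\,|\,x^0)$. No smoothness or strong convexity is needed, only the definitions in~\eqref{reg def}. First, since the regularizer $\frac{\mu}{2}\|x-x^0\|_2^2$ is nonnegative, we have $f(x)\leqslant f_{\mu}(x|x^0)$ for every $x\in\mathbb{R}^n$. Second, we need a lower bound on $f^*$ in terms of $f^*_{\mu}$. Evaluating $f_{\mu}$ at the minimizer $x^*$ (the one closest to $x^0$, so that $\|x^*-x^0\|_2=R$) gives
\begin{equation*}
    f^*_{\mu} \leqslant f_{\mu}(x^*|x^0) = f^* + \frac{\mu}{2} R^2 .
\end{equation*}
This is the same comparison already used in the proof of Lemma~\ref{lemma:reg dist bound}. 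Rearranged, it reads $-f^* \leqslant -f^*_{\mu} + \frac{\mu}{2}R^2$.

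Adding the two inequalities gives, for every $x$,
\begin{equation*}
    f(x) - f^* \leqslant f_{\mu}(x|x^0) - f^*_{\mu} + \frac{\mu}{2}R^2,
\end{equation*}
which is the claim. Convexity is used only to guarantee that $x^*_{\mu}$ and $f^*_{\mu}$ are well defined; the minimum of the strongly convex function $f_{\mu}$ is attained. The inequality itself holds for any $x$.

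There is no real obstacle here. The only point requiring care is to compare $f^*_{\mu}$ with the value of $f_{\mu}$ at $x^*$, not at $x^*_{\mu}$: that choice is what produces exactly the additive $\frac{\mu}{2}R^2$ term with $R=\|x^0-x^*\|_2$, as in the regularized theorems (Theorem~\ref{th:convex_reg_gd_adaptive} and Theorem~\ref{theorem:convex_reg_aigm_adaptive}).
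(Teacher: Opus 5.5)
Your proof is correct and follows the same argument as the paper. Both drop the nonnegative regularizer to get $f(x)\leqslant f_{\mu}(x|x^0)$ and use $f^*_{\mu}\leqslant f_{\mu}(x^*|x^0)=f^*+\frac{\mu}{2}R^2$; the paper writes this as one chain of inequalities, whereas you add two separate inequalities.
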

\begin{proof}
We have
\begin{eqnarray*}
    f(x) - f^*
    & \leqslant & f(x) + \frac{\mu}{2} \|x - x^0 \|_2^2 - f^* 
    = f_{\mu}(x | x^0) - f^*
    \\
    & = & f_{\mu}(x | x^0) - f_{\mu}(x^* | x^0) + \frac{\mu}{2} R^2 \leqslant f_{\mu}(x | x^0) - f_{\mu}^* + \frac{\mu}{2} R^2. 
\end{eqnarray*}
\end{proof}

\begin{lemma} \label{lemma:reg noise}
Let $f$ be a $L$-smooth~\eqref{smooth cond}, convex~\eqref{convexity} function, and $\wtgg f$ satisfies relative error condition~\eqref{eq:relative noise condition}. Then the gradient estimation of regularized function $f_{\mu}$ satisfies~\eqref{noise condition}, that is
\begin{equation*}
    \| \wtgg f_{\mu}(x) - \nabla f_{\mu}(x) \|_2 \leqslant \alpha \|\nabla f_{\mu}(x) \|_2 + \alpha \mu R.
\end{equation*}
\end{lemma}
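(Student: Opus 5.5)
We compute the error of the regularized oracle directly and then convert it into the form of~\eqref{noise condition} with the absolute part $\delta=\alpha\mu R$. By definition~\eqref{reg def}, $\nabla f_\mu(x)=\nabla f(x)+\mu(x-x^0)$ and $\wtgg f_\mu(x)=\wtgg f(x)+\mu(x-x^0)$. The regularization terms cancel, so
\begin{equation*}
    \|\wtgg f_\mu(x)-\nabla f_\mu(x)\|_2=\|\wtgg f(x)-\nabla f(x)\|_2\leqslant \alpha\|\nabla f(x)\|_2
\end{equation*}
by~\eqref{eq:relative noise condition}. It therefore suffices to show
\begin{equation*}
    \|\nabla f(x)\|_2\leqslant \|\nabla f_\mu(x)\|_2+\mu R \qquad (\forall x).
\end{equation*}

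The naive triangle inequality gives $\|\nabla f(x)\|_2\leqslant\|\nabla f_\mu(x)\|_2+\mu\|x-x^0\|_2$. This is unbounded in $x$, so we need a sharper comparison that uses convexity of $f$. Write $g=\nabla f(x)$ and $v=\mu(x-x^0)$, so that $\nabla f_\mu(x)=g+v$.

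The idea is to use monotonicity of $\nabla f$ against the minimizer $x^*$ of $f$, where $\nabla f(x^*)=0$. This gives $\langle g, x-x^*\rangle\geqslant 0$. Splitting $x-x^0=(x-x^*)+(x^*-x^0)$, we get
\begin{equation*}
    \langle g,v\rangle=\mu\langle g,x-x^*\rangle+\mu\langle g,x^*-x^0\rangle\geqslant -\mu R\|g\|_2.
\end{equation*}
Then
\begin{equation*}
    \|g+v\|_2^2=\|g\|_2^2+2\langle g,v\rangle+\|v\|_2^2\geqslant \|g\|_2^2-2\mu R\|g\|_2 .
\end{equation*}
Hence $\|g+v\|_2^2+\mu^2R^2\geqslant(\|g\|_2-\mu R)^2$. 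This is not quite the desired inequality, because of the extra $\mu^2R^2$ term. It is better to lower-bound $\|g+v\|_2$ linearly via a projection: $\|g+v\|_2\geqslant \langle g+v,\, g/\|g\|_2\rangle=\|g\|_2+\langle v,g\rangle/\|g\|_2\geqslant \|g\|_2-\mu R$, using the bound on $\langle g,v\rangle$ above. This is exactly the needed inequality. The case $g=0$ is trivial.

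Combining the steps, $\|\wtgg f_\mu(x)-\nabla f_\mu(x)\|_2\leqslant\alpha(\|\nabla f_\mu(x)\|_2+\mu R)$, which is the claim. The only nontrivial step is recognising that the cross term must be handled through monotonicity of $\nabla f$ at $x^*$, which requires convexity, rather than bounding $\|x-x^0\|_2$. Once the projection onto the direction $g/\|g\|_2$ is used, the rest is routine.
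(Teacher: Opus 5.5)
Your proof is correct, but it reaches the key inequality $\|\nabla f(x)\|_2 \leqslant \|\nabla f_\mu(x)\|_2 + \mu R$ by a different route than the paper.

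The paper anchors at the minimizer $x_\mu^*$ of the regularized function. Strong convexity gives $\langle \nabla f_\mu(x), x - x_\mu^*\rangle \geqslant \mu\|x - x_\mu^*\|_2^2$, and expanding the square yields $\|\nabla f_\mu(x) - \mu(x - x_\mu^*)\|_2 \leqslant \|\nabla f_\mu(x)\|_2$. Since $\nabla f_\mu(x) - \mu(x - x_\mu^*) = \nabla f(x) + \mu(x_\mu^* - x^0)$, the triangle inequality gives $\|\nabla f(x)\|_2 \leqslant \|\nabla f_\mu(x)\|_2 + \mu R_\mu$. Lemma~\ref{lemma:reg dist bound} ($R_\mu \leqslant R$) then finishes.

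You anchor instead at $x^*$, where $\nabla f(x^*) = 0$. Monotonicity there only gives the lower bound $\langle g, v\rangle \geqslant -\mu R\|g\|_2$ on the cross term, not the squared-norm contraction the paper obtains. That is why you need the projection onto $g/\|g\|_2$ to linearize, and that step is valid.

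Both proofs are really monotonicity of $\nabla f$ with different second points. The paper's strong-monotonicity inequality for $f_\mu$ is equivalent to $\langle \nabla f(x) - \nabla f(x_\mu^*), x - x_\mu^*\rangle \geqslant 0$. Your version is more self-contained: it needs neither $x_\mu^*$ nor the auxiliary bound $R_\mu \leqslant R$. The paper's version gives the slightly sharper intermediate estimate with absolute error $\alpha\mu R_\mu$. Neither argument uses $L$-smoothness.
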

\begin{proof}
Since $f_{\mu}$ is strongly convex:
\begin{equation*}
    \langle \nabla f_{\mu}(x), x - x_{\mu}^* \rangle \geqslant \mu \|x - x_{\mu}^* \|_2^2.
\end{equation*}
Then:
\begin{eqnarray*}
    \| \nabla f_{\mu}(x) - \mu (x - x_{\mu}^*) \|_2^2
    & = & \| \nabla f_{\mu}(x) \|_2^2 - 2 \mu \langle \nabla f_{\mu}(x), x - x_{\mu}^* \rangle + \mu^2 \| x - x_{\mu}^* \|_2^2
    \\
    & \leqslant & \| \nabla f_{\mu}(x) \|_2^2 - \mu^2 \| x - x_{\mu}^* \|_2^2 \leqslant  \| \nabla f_{\mu}(x) \|_2^2,
\end{eqnarray*}
That is $\| \nabla f_{\mu}(x) - \mu (x - x_{\mu}^*) \|_2 \leqslant \| \nabla f_{\mu}(x) \|_2$. Thus we can provide error bound:
\begin{eqnarray*}
    \| \wtgg f_{\mu}(x) - \nabla f_{\mu}(x) \|_2
    & = & \|\wtgg f(x) + \mu (x - x^0) - \nabla f(x) - \mu(x - x^0) \|_2
    \\
    & \leqslant & \alpha \|\nabla f(x) \|_2 = \alpha \|\nabla f(x) + \mu (x - x^0) - \mu(x - x^0) \|_2
    \\
    & = & \alpha \|\nabla f_{\mu}(x) - \mu(x - x^*_{\mu}) + \mu(x_0 - x^*_{\mu}) \|_2
    \\
    & \leqslant & \alpha \|\nabla f_{\mu}(x) \|_2 + \alpha \mu R_{\mu}
    \overset{\text{Lemma}~\ref{lemma:reg dist bound}}{\leqslant}  \alpha \|\nabla f_{\mu}(x) \|_2 + \alpha \mu R.
\end{eqnarray*}
\end{proof}

\section{Adaptivity missing proofs} \label{sec:appendix_adaptive_gd}

\begin{lemma} \label{lemma:adaptive_gradient_step}
    Let $f$ is $L$-smooth~\eqref{smooth cond}, $\widetilde{\nabla} f$ satisfies composite error condition~\eqref{noise condition}. If $1 > \widehat{\alpha} \geqslant \alpha, \widehat{L} \geqslant L, \widehat{\delta} \geqslant \delta$ and $x = y - h \wtgg f(y)$, where:
    \begin{equation*}
        \begin{gathered}
            h = \frac{1 - \wha}{(1 + \wha)^2} \frac{1}{4 \whL},
        \end{gathered}
    \end{equation*}
    then:
    \begin{equation*}
        f(x) \leqslant f(y) -  \frac{(1 - \wha)^2}{(1 + \wha)^4}  \frac{1}{32 \whL} \|\wtgg f(y) \|_2^2
    + \frac{\whdelta ^2}{4 (1 + \wha)^2 \whL}.
    \end{equation*}
\end{lemma}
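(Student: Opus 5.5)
The plan is to rerun the argument of Lemma~\ref{lem:gd_inexact_step_inexact_bound} with the true parameters $(\alpha,\delta,L)$ replaced by the estimates $(\wha,\whdelta,\whL)$. The key observation is that the hypotheses are monotone: the composite error condition~\eqref{noise condition} with $(\alpha,\delta)$ implies the same condition with $(\wha,\whdelta)$, since
\[
\alpha\|\nabla f(y)\|_2+\delta\leqslant \wha\|\nabla f(y)\|_2+\whdelta ,
\]
and $\wha<1$ keeps the admissibility requirement. Likewise, $L$-smoothness~\eqref{smooth cond} implies $\whL$-smoothness, because the quadratic term $\frac{L}{2}\|x-y\|_2^2$ is nondecreasing in the constant.

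Hence $f$ is $\whL$-smooth and $\wtgg f$ satisfies~\eqref{noise condition} with parameters $\wha,\whdelta$. The step size $h=\frac{1-\wha}{(1+\wha)^2}\frac{1}{4\whL}$ is exactly the step~\eqref{gd const def} written for these parameters. Therefore Lemma~\ref{lem:gd_inexact_step_inexact_bound}, applied verbatim with $(\alpha,\delta,L)\mapsto(\wha,\whdelta,\whL)$, yields the claimed inequality directly.

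If a self-contained check is preferred, I will reproduce the chain explicitly. Start from the $\whL$-smoothness upper model at $y$, and substitute $x-y=-h\wtgg f(y)$. Bound $\langle\nabla f(y),\wtgg f(y)\rangle$ from below via Lemma~\ref{lem:dot_product_lb_inexact} with $(\wha,\whdelta)$; that lemma only uses Lemma~\ref{noise decompose} and Lemma~\ref{basic alpha conditions}, both of which hold for any valid pair of error bounds. Then collect the coefficient of $\|\wtgg f(y)\|_2^2$, namely $h\bigl(\frac{\whL h}{2}-\frac{1-\wha}{4(1+\wha)^2}\bigr)$, which equals $-\frac{(1-\wha)^2}{32(1+\wha)^4\whL}$. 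The $\delta$-term $h\whdelta^2/(1-\wha)$ simplifies to $\frac{\whdelta^2}{4(1+\wha)^2\whL}$.

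There is no real obstacle. The only point needing care is the justification that enlarging the constants preserves both assumptions, in particular that the $\wha$-based bounds in Lemma~\ref{basic alpha conditions} remain valid. This holds since they are derived solely from~\eqref{noise condition} with whatever admissible parameters are plugged in.
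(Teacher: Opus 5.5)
Your proposal is correct and follows the paper's proof: both observe that $L$-smoothness and the composite error condition with $(\alpha,\delta)$ imply the same conditions with any larger $(\whL,\wha,\whdelta)$ (with $\wha<1$), and then apply Lemma~\ref{lem:gd_inexact_step_inexact_bound} with the hatted parameters. Your explicit recomputation of the $\|\wtgg f(y)\|_2^2$ coefficient and of the $\whdelta$-term is also correct.
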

\begin{proof}
     Observe that if a function $f$ satisfies the smoothness condition with parameter $L$, then it also satisfies this condition with any $\whL \geqslant L$, similarly, it satisfies the corresponding conditions for $\wha \geqslant \alpha$ and $\whdelta \geqslant \delta$. Consequently, Lemma~\ref{lem:gd_inexact_step_inexact_bound} can be applied.
\end{proof}

\begin{lemma} \label{lem:adaptive_gradient_gap_estimation}
    Let $f$ be $L$-smooth~\eqref{smooth cond}, and let $\widetilde{\nabla} f$ satisfy the composite error condition~\eqref{noise condition}. Then, for Algorithm~\ref{alg:adaptive gd} and for all $k = 0,\dots,N - 1$,
    \begin{equation*}
        f(x^{k + 1}) \leqslant f(x^k) - 2^{-3 J_* - 10} \frac{\left(1 - \alpha_0 \right)^2 (1 - \alpha)^2}{L_0} \|\nabla f(x^k) \|_2^2 + 2^{2J_* - 1} \frac{\delta_0^2}{L_0},
    \end{equation*}
    where (assuming $\frac{0}{0} = 1$)
    \begin{equation*}
        \max\limits_{0 \leqslant k \leqslant N - 1} J_k \leqslant J_* = \left \lceil \max \left \lbrace 0, \log_2\left( \frac{1 - \alpha_0}{1 - \alpha} \right), \log_2\left(\frac{L}{L_0} \right), \log_2\left(\frac{\delta}{\delta_0} \right) \right \rbrace \right \rceil,
    \end{equation*}
    The total number of inexact gradient $\wtgg f$ evaluations over $N$ iterations of Algorithm~\ref{alg:adaptive gd} is estimated as $2N + J_* - 1$.
\end{lemma}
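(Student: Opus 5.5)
I would split the argument into three parts: the bound on the search index, the one-step inequality, and the oracle count.

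\textbf{Step 1: the search index is bounded.} At level $t=J_*$, every active estimate is conservative:
\begin{itemize}
\item $2^{J_*}L_0\geqslant L$ and $2^{J_*}\delta_0\geqslant\delta$;
\item $1-\frac{1-\alpha_0}{2^{J_*}}\geqslant\alpha$, since $2^{J_*}\geqslant\frac{1-\alpha_0}{1-\alpha}$.
\end{itemize}
Inactive estimates are conservative by assumption. The same holds for every $t\geqslant J_*$, since all estimates are monotone in $t$.

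Hence Lemma~\ref{lemma:adaptive_gradient_step} applies with $y=x^k$ at any level $t\geqslant J_*$. Its conclusion is exactly the while-loop exit condition, because $\theta$ coincides with the coefficient in that lemma. So the inner loop stops no later than level $\max\{J_k,J_*\}$.

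By induction, $J_k\leqslant J_*$ for all $k$. If $J_k\leqslant J_*$ at the start of iteration $k$, the accepted index is at most $J_*$. Then $J_{k+1}=\max\{0,J_k-1\}\leqslant J_*$, with $J_k$ here denoting the accepted value.

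\textbf{Step 2: the one-step inequality.} The accepted step gives
\begin{equation*}
f(x^{k+1})\leqslant f(x^k)-\theta_k\|\wtgg f(x^k)\|_2^2+\frac{\whdelta_k^2}{4(1+\wha_k)^2\whL_k}.
\end{equation*}
I convert this to the true gradient using Lemma~\ref{basic alpha conditions}:
\begin{equation*}
\|\wtgg f\|_2^2\geqslant\tfrac12(1-\alpha)^2\|\nabla f\|_2^2-\delta^2.
\end{equation*}
Then I bound each coefficient using $t\leqslant J_*$, and in the delta term also $t\geqslant 0$.
\begin{itemize}
\item $1-\wha_k\geqslant(1-\alpha_0)2^{-J_*}$ and $(1+\wha_k)^4\leqslant 16$. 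Together with $\whL_k\leqslant 2^{J_*}L_0$, this gives
\begin{equation*}
\theta_k\geqslant\frac{(1-\alpha_0)^2}{512\cdot 2^{3J_*}L_0}.
\end{equation*}
\item Multiplying by $\tfrac12(1-\alpha)^2$ yields the gradient coefficient $2^{-3J_*-10}$.
\item The $\whdelta_k^2$ term: $\whdelta_k\leqslant 2^{J_*}\delta_0$, $\whL_k\geqslant L_0$ and $(1+\wha_k)^2\geqslant 1$ bound it by $2^{2J_*-2}\delta_0^2/L_0$.
\item The extra term $\theta_k\delta^2$: use $\delta\leqslant 2^{J_*}\delta_0$ and $\theta_k\leqslant\frac{1}{32\whL_k}\leqslant\frac{1}{32L_0}$. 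This bounds it by $2^{2J_*}\delta_0^2/(32L_0)\leqslant 2^{2J_*-2}\delta_0^2/L_0$.
\end{itemize}
The two $\delta_0^2$ contributions sum to at most $2^{2J_*-1}\delta_0^2/L_0$, as claimed.

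\textbf{The main obstacle} is the edge cases of $J_*$. If $\delta_0=0$ and $\delta>0$, then $J_*=\infty$ and the statement is vacuous. If $\delta=\delta_0=0$, the convention $0/0=1$ applies. I would note these cases and otherwise proceed as above.

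\textbf{Step 3: oracle count.} The only gradient evaluated in iteration $k$ is $\wtgg f(x^k)$, and it can be reused across Step calls. A Step call is made once at each level tried, and the loop is the natural count, so iteration $k$ costs $1+(\text{accepted }J_k-\text{initial }J_k)$ calls. The index decreases by at most one between iterations and is never negative, so the total increment is at most $N-1+J_*$. Summing gives $N+(N-1+J_*)=2N+J_*-1$ calls.
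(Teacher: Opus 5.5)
Your proposal is correct and follows essentially the same route as the paper: the termination level from Lemma~\ref{lemma:adaptive_gradient_step}, the bounds on $\theta_k$, $\whdelta_k$, $\whL_k$ at an index of at most $J_*$, the conversion to the true gradient via Lemma~\ref{basic alpha conditions}, and the telescoping count of Step calls. The differences are minor. You make explicit the induction showing that the starting index never exceeds $J_*$, which the paper leaves implicit. You also bound the extra $\theta_k\delta^2$ term using $\theta_k \leqslant 1/(32L_0)$ rather than the paper's lower-bound coefficient, and this gives the same final constant $2^{2J_*-1}$.
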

\begin{proof}
    At each iteration of the outer loop of Algorithm~\ref{alg:adaptive gd}, a "while" loop is started and terminates when
    \begin{equation} \label{eq:in_adapt_gap_proof}
         f(x^{k + 1}) \leqslant f(x^k) -  \frac{(1 - \wha_k)^2}{(1 + \wha_k)^4} \frac{1}{32 \whL_k} \|\wtgg f(x^k) \|_2^2 + \frac{\whdelta_k ^2}{4\whL_k (1 + \wha_k)^2}.
    \end{equation}
    From the definitions $\alpha_0 \leqslant \wha_k \leqslant 1 - \frac{1 - \alpha_0}{2^{J_k}}$, $\delta_0 \leqslant \whdelta_k \leqslant 2^{J_k} \delta_0$, and $L_0 \leqslant \whL_k \leqslant 2^{J_k} L_0$, and from Lemma~\ref{lemma:adaptive_gradient_step}, it follows that the "while" loop is guaranteed to terminate once $\wha_k \geqslant \alpha$, $\whdelta_k \geqslant \delta$, and $\whL_k \geqslant L$, which is equivalent to $J_k \geqslant \max \left \lbrace 0, \log_2\left( \frac{1 - \alpha_0}{1 - \alpha} \right), \log_2\left(\frac{L}{L_0} \right), \log_2\left(\frac{\delta}{\delta_0} \right) \right \rbrace$. Hence $J_k \leqslant J_* = \left\lceil\max \left \lbrace 0, \log_2\left( \frac{1 - \alpha_0}{1 - \alpha} \right), \log_2\left(\frac{L}{L_0} \right), \log_2\left(\frac{\delta}{\delta_0} \right) \right \rbrace \right\rceil$. Then, substituting the estimations (and using $1 \leqslant 1 + \wha_k \leqslant 2$) of the adaptive parameters into \eqref{eq:in_adapt_gap_proof}, we obtain
    \begin{eqnarray*}
        f(x^{k + 1}) 
        & \leqslant & f(x^k) - 2^{-3 J_* - 9} \frac{\left(1 - \alpha_0 \right)^2}{L_0} \|\wtgg f(x^k) \|_2^2 + 2^{2J_* - 2} \frac{\delta_0^2}{L_0}
        \\
        & \overset{\text{Lemma}~\ref{basic alpha conditions}}{\leqslant} & f(x^k) - 2^{-3 J_* - 9} \frac{\left(1 - \alpha_0 \right)^2}{L_0} \left( \frac{1}{2} (1 - \alpha)^2 \|\nabla f(x^k) \|_2^2 - \delta^2 \right)
        \\
        & + & 2^{2J_* - 2} \frac{\delta_0^2}{L_0}
        \\
        & \leqslant & f(x^k) - 2^{-3 J_* - 10} \frac{\left(1 - \alpha_0 \right)^2 (1 - \alpha)^2}{L_0} \|\nabla f(x^k) \|_2^2
        \\
        & + &  2^{-3 J_* - 9} \frac{\left(1 - \alpha_0 \right)^2}{L_0} \delta^2 + 2^{2J_* - 2} \frac{\delta_0^2}{L_0}
        \\
        & \overset{\delta \leqslant 2^{J_*} \delta_0, 0 \leqslant \alpha_0}{\leqslant} & f(x^k) - 2^{-3 J_* - 10} \frac{\left(1 - \alpha_0 \right)^2 (1 - \alpha)^2}{L_0} \|\nabla f(x^k) \|_2^2
        + 2^{2J_* - 1} \frac{\delta_0^2}{L_0}.
    \end{eqnarray*}
    Now we estimate the total number of iterations, taking into account the "while" loop steps. On the $k$-th iteration of the outer loop, the inner loop takes at most $J_{k + 1} - J_k + 1$ steps, hence the total number is
    \begin{equation*}
        J_0 + \sum_{k = 0}^{N - 2} (J_{k + 1} - J_k + 1) \leqslant N - 1 + J_{N - 1} \leqslant N - 1 + J_*.
    \end{equation*}
    Then the total number of inexact gradient calls estimated as $2 N + J_* - 1$.
\end{proof}

\begin{proof}[Proof of the Theorem~\ref{th:nonconvex_gd_adaptive}]
    Using Lemma~\ref{lem:adaptive_gradient_gap_estimation} one can obtain inequality for $k =0 \dots N - 1$:
    \begin{equation*}
        2^{-3 J_* -10} \frac{\left(1 - \alpha_0 \right)^2 (1 - \alpha)^2}{L_0} \|\nabla f(x^k) \|_2^2 \leqslant f(x^k) - f(x^{k + 1}) + 2^{2J_* - 1} \frac{\delta_0^2}{L_0}.
    \end{equation*}
    Summing up inequalities above for $k = 0$ to $k = N - 1$:
    \begin{eqnarray*}
        N \min_{0 \leqslant k \leqslant N - 1} \|\nabla f(x^k) \|_2^2
        & \leqslant &
        \sum_{k = 0}^{N - 1} \|\nabla f(x^k) \|_2^2 
        \\
        & \leqslant & \frac{2^{3 J_* + 10} L_0}{\left(1 - \alpha_0 \right)^2 (1 - \alpha)^2} \left( f(x^0) - f(x^N) + N 2^{2J_* - 1} \frac{\delta_0^2}{L_0} \right).
    \end{eqnarray*}
    That is:
    \begin{eqnarray*}
        \min_{0 \leqslant k \leqslant N - 1} \|\nabla f(x^k) \|_2^2
        & \leqslant & 2^{3 J_* + 10} \frac{L_0}{(1 - \alpha_0)^2 (1 - \alpha)^2} \frac{f(x^0) - f(x^*)}{N}
        \\
        & + & 2^{5J_* + 9} \frac{\delta_0^2}{(1 - \alpha_0)^2 (1 - \alpha)^2}.
        \\
        & = & 1024 \frac{M_*^3 L_0}{(1 - \alpha_0)^2 (1 - \alpha)^2} \frac{f(x^0) - f(x^*)}{N}
        \\
        & + & 512 \frac{M_*^5 \delta_0^2}{(1 - \alpha_0)^2 (1 - \alpha)^2}.
    \end{eqnarray*}
    Total amount of inexact gradient calculations estimated as $2N + J_* - 1$ by Lemma~\ref{lem:adaptive_gradient_gap_estimation}.
\end{proof}

\begin{proof}[Proof of the Theorem~\ref{th:pl_gd_adaptive}]
    Using Lemma~\ref{lem:adaptive_gradient_gap_estimation} one can obtain inequality for $k =0 \dots N - 1$:
    \begin{equation*}
        f(x^{k + 1}) - f^* \leqslant f(x^k) - f^* - 2^{-3 J_* - 10} \frac{\left(1 - \alpha_0 \right)^2 (1 - \alpha)^2}{L_0} \|\nabla f(x^k) \|_2^2 + 2^{2J_* - 1} \frac{\delta_0^2}{L_0}.
    \end{equation*}
    Then, using P$\L$ condition~\eqref{PL cond}:
    \begin{eqnarray*}
        f(x^{k + 1}) - f^*
        & \leqslant &  f(x^k) - f^* - 2^{-3 J_* - 10} \frac{\left(1 - \alpha_0 \right)^2 (1 - \alpha)^2}{L_0} \|\nabla f(x^k) \|_2^2 + 2^{2J_* - 1} \frac{\delta_0^2}{L_0}
        \\
        & \leqslant & f(x^k) - f^* - 2^{-3 J_* - 9} \frac{\left(1 - \alpha_0 \right)^2 (1 - \alpha)^2 \mu}{L_0} (f(x^k) - f^*)  + 2^{2J_* - 1} \frac{\delta_0^2}{L_0}
        \\
        & = & \left(1 - (1 - \alpha_0)^2 (1 - \alpha)^2 \cdot 2^{-3 J_* - 9} \frac{\mu}{L_0} \right)(f(x^k) - f^*)  + 2^{2J_* - 1} \frac{\delta_0^2}{L_0}.
    \end{eqnarray*}
    Iterating inequality above, denoting $q = 1 - (1 - \alpha_0)^2 (1 - \alpha)^2 \cdot 2^{-3 J_* - 9} \frac{\mu}{L_0}$, since $L_0 2^{J_*} \geqslant L \geqslant \mu$, then:
    \begin{eqnarray*}
        f(x^{N}) - f^*
        & \leqslant & q^N (f(x^0) - f^*) + \sum_{k = 0}^{N - 1} q^k 2^{2J_* - 1} \frac{\delta_0^2}{L_0}
        \\
        & \leqslant & \left(1 - (1 - \alpha_0)^2 (1 - \alpha)^2 \cdot 2^{-3 J_* - 9} \frac{\mu}{L_0} \right)^N (f(x^0) - f^*)
        \\
        & + & (1 - \alpha_0)^{-2} (1 - \alpha)^{-2} \cdot 2^{3 J_* + 9} \frac{L_0}{\mu} 2^{2J_* - 1} \frac{\delta_0^2}{L_0}
        \\
        & \leqslant & \left(1 - (1 - \alpha_0)^2 (1 - \alpha)^2 \cdot 2^{-3 J_* - 9} \frac{\mu}{L_0} \right)^N (f(x^0) - f^*)
        \\
        & + & \frac{2^{5 J_* + 8}}{(1 - \alpha_0)^{2} (1 - \alpha)^{2}} \frac{\delta_0^2}{\mu}
        \\
        & = & 
        \left(1 - \frac{1}{512} (1 - \alpha_0)^2 (1 - \alpha)^2 \cdot M_*^{-3} \frac{\mu}{L_0} \right)^N (f(x^0) - f^*)
        \\
        & + & 256 \frac{M_*^5}{(1 - \alpha_0)^{2} (1 - \alpha)^{2}} \frac{\delta_0^2}{\mu}.
    \end{eqnarray*}
    The total number of inexact gradient evaluations is estimated as $2N + J_* - 1$ by Lemma~\ref{lem:adaptive_gradient_gap_estimation}.
\end{proof}

\begin{proof}[Proof of the Theorem~\ref{th:pl_gd_non_adaptive}]
    Result follows directly from Theorem~\ref{th:pl_gd_adaptive}, since $J_* = 0$ and $M_* = 1$.
\end{proof}

\begin{proof}[Proof of the Theorem~\ref{th:convex_reg_gd_adaptive}]
    Let us consider $f_{\mu} = f(x) + \frac{\mu}{2} \| x - x^0 \|_2^2$. It us known, that $f_{\mu}$ is $\mu$ strongly convex~\eqref{strong conv} and $L + \mu$ smooth~\eqref{smooth cond}. Corresponding gradient estimation $\wtgg f_{\mu} = \wtgg f + \mu (x - x^0)$ satisfies composite error condition with relative error parameter $\alpha$ and absolute error $\alpha \mu R$ (by Lemma~\ref{lemma:reg noise}). Let us use $\delta_0 = \mu R_0$ as absolute error initial estimation. Using Theorem~\ref{th:pl_gd_adaptive} convergence for $f_{\mu}$ one can obtain:
    \begin{eqnarray*}    
        f_{\mu}(x^N | x^0) - f_{\mu}^* & \leqslant & \left(1 - (1 - \alpha_0)^2 (1 - \alpha)^2 \cdot 2^{-3 J_* - 9} \frac{\mu}{L_0} \right)^N (f(x^0) - f^*)
        \\
        & + & \frac{2^{5 J_* + 8} \mu R_0^2}{(1 - \alpha_0)^{2} (1 - \alpha)^{2}},
    \end{eqnarray*}
    where:
    \begin{equation*}
        \begin{gathered}
            J_* = \left\lceil \max \left \lbrace 0, \log_2\left( \frac{1 - \alpha_0}{1 - \alpha} \right), \log_2\left(\frac{L + \mu}{L_0} \right), \log_2\left(\alpha \frac{R}{R_0} \right) \right \rbrace \right\rceil,
            M_* = 2^{J_*}.
        \end{gathered}
    \end{equation*}
    Then (since $f_{\mu}^* = f(x_{\mu}^*) + \frac{\mu}{2} \|x_{\mu}^* - x^0 \|_2^2 \geqslant f^*$):
    \begin{eqnarray*}
        f(x^N) - f^* 
        & \overset{\text{Lemma}~\ref{lemma:reg func bound}}{\leqslant} & f_{\mu}(x^N | x^0) - f_{\mu}^* + \frac{\mu}{2} R^2
        \\
        & \leqslant & \left(1 - (1 - \alpha_0)^2 (1 - \alpha)^2 \cdot 2^{-3 J_* - 9} \frac{\mu}{L_0} \right)^N (f(x^0) - f^*)
        \\
        & + & \frac{2^{5 J_* + 8} \mu R_0^2}{(1 - \alpha_0)^{2} (1 - \alpha)^{2}} + \frac{\mu R^2}{2}
        \\
        & = & \left(1 - \frac{1}{512} (1 - \alpha_0)^2 (1 - \alpha)^2 M_*^{-3} \cdot \frac{\mu}{L_0} \right)^N (f(x^0) - f^*)
        \\
        & + &
        256\frac{M_*^5 \mu R_0^2}{(1 - \alpha_0)^{2} (1 - \alpha)^{2}} + \frac{\mu R^2}{2}
        \\
        & \leqslant & \exp \left(-\frac{1}{512} (1 - \alpha_0)^2 (1 - \alpha)^2 M_*^{-3} \cdot \frac{\mu}{L_0} N \right) (f(x^0) - f^*)
        \\
        & + &
        256\frac{M_*^5 \mu R_0^2}{(1 - \alpha_0)^{2} (1 - \alpha)^{2}} + \frac{\mu R^2}{2}.
    \end{eqnarray*}
    Thus, to solve problem $f(x^N) - f^* \leqslant \varepsilon$, where $0 < \varepsilon \leqslant f(x^0) - f^*$ one can choose:
    \begin{eqnarray*}
        \mu
        & = & \varepsilon \cdot \min \left\{ \frac{2}{3R^2}, \frac{(1 - \alpha_0)^2 (1 - \alpha)^2}{768 M_{**}^5 R_0^2} \right\}.
    \end{eqnarray*}
    Such chose of $\mu$ guarantee:
    \begin{equation*}
        \mu \leqslant \frac{2 \varepsilon}{3R^2} \leqslant \frac{LR^2}{3 R^2} \leqslant L,
    \end{equation*}
    where:
    \begin{equation*}
        J_{**} = \left\lceil \max \left \lbrace 0, \log_2\left( \frac{1 - \alpha_0}{1 - \alpha} \right), \log_2\left(\frac{2 L}{L_0} \right), \log_2\left(\alpha \frac{R}{R_0} \right) \right \rbrace \right\rceil \geqslant J_*,
        M_{**} = 2^{J_{**}}.
    \end{equation*}
    then after:
    \begin{eqnarray*}
        N
        & = & \max \left\{ 0, \left\lceil \frac{512 M_{**}^3}{(1 - \alpha_0)^2(1 - \alpha)^2} \frac{L_0}{\mu} \ln \left( \frac{3 (f(x^0) - f^*)}{\varepsilon} \right) \right\rceil \right \}
        \\
        & = & \max \left\{ 0, \left\lceil \frac{512 M_{**}^3}{(1 - \alpha_0)^2(1 - \alpha)^2} \frac{L_0 \widehat{R}^2}{\varepsilon} \ln \left( \frac{3 (f(x^0) - f^*)}{\varepsilon} \right) \right\rceil \right \}
    \end{eqnarray*}
    iterations method will provide $f(x^N) - f^* \leqslant \varepsilon$, where
    \begin{equation*}
        \widehat{R}^2 = \max \left\{ \frac{3R^2}{2}, \frac{768 M_{**}^5 R_0^2}{(1- \alpha_0)^2(1 - \alpha)^2} \right\}.
    \end{equation*}
\end{proof}

\section{Adaptive intermediate method missing proofs} \label{sec:appendix_intermediate}

Following \citep[p. 83]{nesterov2018lectures}, let us introduce a parameterized set of functions $\Psi$, its element defined for $c \in \mathbb{R}, \kappa \in \mathbb{R}^{++},$ and $u \in \mathbb{R}^n$,  as follows
\begin{equation} \label{qudaratic family}
    \psi(x | c, \kappa, u) = c + \frac{\kappa}{2} \| x - u \|_2^2, \quad \forall x \in \mathbb{R}^n. 
\end{equation}

According to~\cite{nesterov2018lectures}, we can mention to the following useful property of the class $\Psi$.

\begin{lemma} \label{psi comb}
Let $\psi_1, \psi_2 \in \Psi$. Then $\forall c_1, c_2 \in \mathbb{R}, \forall \eta_1, \eta_2, \kappa_1, \kappa_2 \in \mathbb{R}^{++}$, and $\forall u, v \in \mathbb{R}^n$, we have 
\[
    \eta_1 \psi_1(x|c_1, \kappa_1, u) + \eta_2 \psi_2(x|c_2, \kappa_2, v) = \psi_3(x|c_3, \kappa_3, w), 
\]
where 
\begin{equation*}
    \begin{gathered}
        c_3 = \eta_1 c_1 + \eta_2 c_2 + \frac{\eta_1\eta_2\kappa_1\kappa_2}{2(\eta_1\kappa_1 + \eta_2\kappa_2)} \|u  - v \|_2^2, \\
        \kappa_3 = \eta_1\kappa_1 + \eta_2\kappa_2, \quad w = \frac{\eta_1\kappa_1 u +\eta_2\kappa_2 v}{\eta_1\kappa_1 + \eta_2\kappa_2}.
    \end{gathered}
\end{equation*}
\end{lemma}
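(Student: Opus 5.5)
The plan is to expand the weighted sum directly and match coefficients, treating it as an identity between quadratic functions of $x$. First write
\[
\eta_1 \psi_1(x|c_1,\kappa_1,u) + \eta_2 \psi_2(x|c_2,\kappa_2,v) = \eta_1 c_1 + \eta_2 c_2 + \frac{\eta_1\kappa_1}{2}\|x-u\|_2^2 + \frac{\eta_2\kappa_2}{2}\|x-v\|_2^2 .
\]
The quadratic part in $x$ has leading coefficient $\frac{1}{2}(\eta_1\kappa_1+\eta_2\kappa_2)$, which gives $\kappa_3 = \eta_1\kappa_1+\eta_2\kappa_2 > 0$. This ensures that $\psi_3$ indeed lies in $\Psi$.

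Next, I complete the square. Differentiating, the sum is minimized where $\eta_1\kappa_1(x-u)+\eta_2\kappa_2(x-v)=0$, that is, at $x=w$ with $w$ as stated. Since the Hessian of the sum is $\kappa_3 I$, the sum equals $\text{value at } w + \frac{\kappa_3}{2}\|x-w\|_2^2$. This is the standard fact that a quadratic with Hessian $\kappa_3 I$ equals its minimum value plus $\frac{\kappa_3}{2}\|x-w\|_2^2$.

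It then remains to compute the value at $w$, which I expect to be the only computational step (and it is routine). We have
\[
w-u = \frac{\eta_2\kappa_2}{\kappa_3}(v-u), \qquad w-v = \frac{\eta_1\kappa_1}{\kappa_3}(u-v).
\]
Substituting these gives
\[
\frac{\eta_1\kappa_1}{2}\frac{\eta_2^2\kappa_2^2}{\kappa_3^2}\|u-v\|_2^2 + \frac{\eta_2\kappa_2}{2}\frac{\eta_1^2\kappa_1^2}{\kappa_3^2}\|u-v\|_2^2 = \frac{\eta_1\eta_2\kappa_1\kappa_2(\eta_2\kappa_2+\eta_1\kappa_1)}{2\kappa_3^2}\|u-v\|_2^2 = \frac{\eta_1\eta_2\kappa_1\kappa_2}{2\kappa_3}\|u-v\|_2^2 .
\]
Adding $\eta_1c_1+\eta_2c_2$ yields exactly $c_3$, which completes the identity.
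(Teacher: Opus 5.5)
Your proof is correct, and it is the standard argument. The paper gives no proof of this lemma and simply attributes it to Nesterov's \emph{Lectures on Convex Optimization}; your completion of the square is the computation Nesterov uses for the canonical form of estimating functions: read off the Hessian $\kappa_3 I$, find $w$ from the first-order condition, and evaluate the minimum value to get $c_3$.
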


Also from~\cite{nesterov2018lectures} we mention the following simple lemma.
\begin{lemma} \label{conv lemma}
Let $0 < \lambda < 1$, $A \geqslant 0$, $\psi_0, \psi \in \Psi$, $z \in \mathbb{R}^n$ such that 
\begin{equation*}
f(z) \leqslant \min\limits_{x \in \mathbb{R}^n} \psi(x) + A, \quad \psi(x) \leqslant \lambda \psi_0(x) + (1 - \lambda) f(x), \quad \forall x \in \mathbb{R}^n.
\end{equation*}
Then
\begin{equation*}
    f(z) - f^* \leqslant \lambda (\psi_0(x^*) - f^*) + A. 
\end{equation*}
\end{lemma}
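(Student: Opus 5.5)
The statement to prove is Lemma~\ref{conv lemma}. The plan is a direct chain of inequalities that evaluates the dominating inequality for $\psi$ at the minimizer $x^*$. The hypotheses $\psi_0,\psi\in\Psi$ are only needed to guarantee that $\min_x \psi(x)$ exists, since each element of $\Psi$ is a convex quadratic with positive curvature $\kappa$ and hence attains its minimum. Convexity of $f$ plays no role.

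First, since $\psi$ attains its minimum, we have $\min_{x} \psi(x) \leqslant \psi(x^*)$. Combining this with the first hypothesis gives $f(z) \leqslant \psi(x^*) + A$.

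Second, we apply the second hypothesis at the particular point $x = x^*$. This gives $\psi(x^*) \leqslant \lambda \psi_0(x^*) + (1-\lambda) f(x^*) = \lambda \psi_0(x^*) + (1-\lambda) f^*$. Substituting into the previous bound yields
\begin{equation*}
    f(z) \leqslant \lambda \psi_0(x^*) + (1-\lambda) f^* + A .
\end{equation*}

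Third, subtracting $f^*$ from both sides gives $f(z) - f^* \leqslant \lambda(\psi_0(x^*) - f^*) + A$, which is the claim. The conditions $0<\lambda<1$ and $A\geqslant 0$ are not used in the algebra. They only serve the later application, where $\lambda$ will be the product $\prod(1-\omega_k)$ and $A$ the accumulated $\widehat{A}$.

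There is no real obstacle here. The only care point is to evaluate the dominating inequality at $x^*$ rather than at the minimizer of $\psi$, because $f(x^*) = f^*$ is exactly what converts the bound into a statement about $f(z)-f^*$.
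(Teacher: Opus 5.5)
Your proposal is correct and follows the paper's proof exactly: bound $\min_x \psi(x)$ by $\psi(x^*)$, apply the domination inequality at $x = x^*$, and use $f(x^*) = f^*$. Your remarks that $0<\lambda<1$, $A \geqslant 0$ and convexity of $f$ are not needed for this step are also accurate.
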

\begin{proof}
    \begin{eqnarray*}
        f(z) - f^*
        & \leqslant & \min\limits_{x \in \mathbb{R}^n} \psi(x) + A - f^*
        \\
        & \leqslant & \psi(x^*) - f^* + A \leqslant \lambda \psi_0(x^*) + (1 - \lambda) f(x^*) - f^* + A
        \\
        & = & \lambda ( \psi_0(x^*) - f^* ) + A. 
    \end{eqnarray*}
\end{proof}

\begin{lemma} \label{accelerated evaluated quadratic estimation}
    Let $\wtgg f$ satisfies error condition~\eqref{noise condition} and $\mu > 0$, then:
    \begin{eqnarray*}
        (\forall x, z \in \R^n) \; \frac{\mu}{4} \left\|x - z + \frac{2}{\mu} \widetilde{\nabla} f(z) \right \|_2^2
        & \leqslant & \langle \nabla f(z), x - z \rangle + \frac{\mu}{2} \|x - z \|_2^2
        \\
        & + & \frac{1 - 2\alpha + 3\alpha^2}{(1 - \alpha)^2 \mu} \|\wtgg f(z) \|_2^2 + \frac{2\delta^2}{(1 - \alpha)^2 \mu}, \\
        (\forall x, z \in \R^n) \; \frac{\mu}{4} \left\|x - z + \frac{2}{\mu} \widetilde{\nabla} f(z) \right \|_2^2
        & \geqslant & \langle \nabla f(z), x - z \rangle
        + \frac{1 - 2\alpha - \alpha^2}{(1 - \alpha)^2 \mu} \|\wtgg f(z) \|_2^2
        \\
        & - & \frac{2\delta^2}{(1 - \alpha)^2 \mu}.
    \end{eqnarray*}
\end{lemma}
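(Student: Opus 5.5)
The plan is to expand the square on the left-hand side and isolate the gradient error. Write $g = \widetilde{\nabla} f(z)$, $d = x - z$ and $\zeta = g - \nabla f(z)$. Expanding gives
\begin{equation*}
    \frac{\mu}{4}\left\|d + \frac{2}{\mu} g\right\|_2^2 = \frac{\mu}{4}\|d\|_2^2 + \langle g, d\rangle + \frac{1}{\mu}\|g\|_2^2 = \langle \nabla f(z), d\rangle + \langle \zeta, d\rangle + \frac{\mu}{4}\|d\|_2^2 + \frac{1}{\mu}\|g\|_2^2 .
\end{equation*}
Both inequalities then reduce to two-sided control of the cross term $\langle \zeta, d\rangle$.

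For that cross term I will apply Lemma~\ref{fenchel} with $\lambda = \mu/2$, to the pair $(d,\zeta)$ and to the pair $(d,-\zeta)$. This gives
\begin{equation*}
    |\langle \zeta, d\rangle| \leqslant \frac{\mu}{4}\|d\|_2^2 + \frac{1}{\mu}\|\zeta\|_2^2 .
\end{equation*}
\begin{itemize}
\item In the upper bound, the $\frac{\mu}{4}\|d\|_2^2$ coming from this estimate adds to the existing $\frac{\mu}{4}\|d\|_2^2$, producing $\frac{\mu}{2}\|x-z\|_2^2$.
\item In the lower bound, it cancels the existing $\frac{\mu}{4}\|d\|_2^2$ exactly, which is why no quadratic term in $x - z$ appears there.
\end{itemize}

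Next I bound $\|\zeta\|_2^2$ in terms of the observed gradient. By Lemma~\ref{lem:composite error inexact estimation} together with $(a+b)^2 \leqslant 2a^2 + 2b^2$,
\begin{equation*}
    \|\zeta\|_2^2 \leqslant \frac{2\alpha^2}{(1-\alpha)^2}\|g\|_2^2 + \frac{2\delta^2}{(1-\alpha)^2}.
\end{equation*}
Substituting this, the coefficient of $\|g\|_2^2$ becomes $\frac{1}{\mu}\bigl(1 \pm \frac{2\alpha^2}{(1-\alpha)^2}\bigr)$. This equals $\frac{1 - 2\alpha + 3\alpha^2}{(1-\alpha)^2\mu}$ with the plus sign and $\frac{1 - 2\alpha - \alpha^2}{(1-\alpha)^2\mu}$ with the minus sign. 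The constant term is $\pm\frac{2\delta^2}{(1-\alpha)^2\mu}$, which matches both claimed inequalities.

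There is no real obstacle in this argument. The only point needing care is the choice $\lambda = \mu/2$ in Fenchel's inequality, which is what makes the $\|d\|_2^2$ terms combine or cancel as required. The other check is that the error is measured relative to $\|\widetilde{\nabla} f(z)\|_2$ rather than $\|\nabla f(z)\|_2$, which is why Lemma~\ref{lem:composite error inexact estimation} is used instead of~\eqref{noise condition} directly.
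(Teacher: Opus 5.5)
Your proposal is correct and follows essentially the same route as the paper. You both expand the square, bound the cross term via Lemma~\ref{fenchel} with $\lambda=\mu/2$, and then control $\|\widetilde{\nabla} f(z)-\nabla f(z)\|_2^2$ using Lemma~\ref{lem:composite error inexact estimation} together with $(a+b)^2\leqslant 2a^2+2b^2$, which gives exactly the stated coefficients.
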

\begin{proof}
We will find bounds for each term of sum:
\begin{equation*}
    \frac{\mu}{4} \left\|x - z + \frac{2}{\mu} \widetilde{\nabla} f(z) \right \|_2^2 = \frac{\mu}{4} \|x - z \|_2^2 + \langle \wtgg f(z), x - z \rangle + \frac{1}{\mu} \|\wtgg f(z) \|_2^2.
\end{equation*}
Linear form:
\begin{eqnarray*}
    \langle \wtgg f(z), x - z \rangle
    & \overset{\text{Lemma }\ref{fenchel}}{\leqslant} & \langle \nabla f(z), x - z \rangle + \frac{1}{\mu} \left\| \wtgg f(z) - \nabla f(z)  \right \|_2^2 + \frac{\mu}{4} \|x - z \|_2^2 \\
    & \overset{\text{Lemma}~\ref{lem:composite error inexact estimation}}{\leqslant} &  \langle \nabla f(z), x - z \rangle + \frac{1}{\mu} \left( \frac{\alpha}{1 - \alpha} \|\wtgg f(z) \|_2 + \frac{\delta}{1 - \alpha} \right)^2
    \\
    & + & \frac{\mu}{4} \|x - z \|_2^2
    \\
    & \leqslant &  \langle \nabla f(z), x - z \rangle + \frac{2\alpha^2}{(1 - \alpha)^2 \mu} \|\wtgg f(z) \|_2^2
    \\
    & + & \frac{2\delta^2}{(1 - \alpha)^2 \mu} + \frac{\mu}{4} \|x - z \|_2^2.
\end{eqnarray*}
\begin{eqnarray*}
    \langle \wtgg f(z), x - z \rangle
    & \overset{\text{Lemma }\ref{fenchel}}{\geqslant} & \langle \nabla f(z), x - z \rangle - \frac{1}{\mu} \| \wtgg f(z) - \nabla f(z) \|_2^2 - \frac{\mu}{4} \|x - z \|_2^2
    \\
    & \overset{\text{Lemma}~\ref{lem:composite error inexact estimation}}{\geqslant} &  \langle \nabla f(z), x - z \rangle -  \frac{1}{\mu} \left( \frac{\alpha}{1 - \alpha} \|\wtgg f(z) \|_2 + \frac{\delta}{1 - \alpha} \right)^2
    \\
    & - & \frac{\mu}{4} \|x - z \|_2^2
    \\
    & \geqslant &  \langle \nabla f(z), x - z \rangle - \frac{2\alpha^2}{(1 - \alpha)^2 \mu} \|\wtgg f(z) \|_2^2
    \\
    & - & \frac{2\delta^2}{(1 - \alpha)^2 \mu} - \frac{\mu}{4} \|x - z \|_2^2.
\end{eqnarray*}
Then:
\begin{eqnarray*}
    \frac{\mu}{4} \left\|x - z + \frac{2}{\mu} \widetilde{\nabla} f(z) \right \|_2^2
    & \leqslant & \langle \nabla f(z), x - z \rangle + \frac{\mu}{2} \|x - z \|_2^2
    \\
    & + & \frac{2\alpha^2}{(1 - \alpha)^2 \mu} \|\wtgg f(z) \|_2^2 + \frac{2\delta^2}{(1 - \alpha)^2 \mu} + \frac{1}{\mu} \|\wtgg f(z) \|_2^2
    \\
    & = & 
    \langle \nabla f(z), x - z \rangle + \frac{\mu}{2} \|x - z \|_2^2
    \\
    & + & \frac{1 - 2\alpha + 3\alpha^2}{(1 - \alpha)^2 \mu} \|\wtgg f(z) \|_2^2 + \frac{2\delta^2}{(1 - \alpha)^2 \mu}.
\end{eqnarray*}
\begin{eqnarray*}
    \frac{\mu}{4} \left\|x - z + \frac{2}{\mu} \widetilde{\nabla} f(z) \right \|_2^2
    & \geqslant & \langle \nabla f(z), x - z \rangle
    \\
    & - & \frac{2\alpha^2}{(1 - \alpha)^2 \mu} \|\wtgg f(z) \|_2^2 - \frac{2\delta^2}{(1 - \alpha)^2 \mu} + \frac{1}{\mu} \|\wtgg f(z) \|_2^2
    \\
    & = & 
    \langle \nabla f(z), x - z \rangle
    + \frac{1 - 2\alpha - \alpha^2}{(1 - \alpha)^2 \mu} \|\wtgg f(z) \|_2^2 - \frac{2\delta^2}{(1 - \alpha)^2 \mu}.
\end{eqnarray*}

\end{proof}

Now, let us proof the following lemma which provides a lower bound of the function $f$ from the set $\Psi$. 

\begin{lemma} \label{lower bound}
Let $f$ be a $\mu$-strongly convex~\eqref{strong conv} function and $\widetilde{\nabla}{f}$ satisfies composite error condition~\eqref{noise condition}. Then for any $z \in \mathbb{R}^n$, there is $\phi_{z} \in \Psi$, such that: 
\begin{equation*}
    \begin{gathered}
        \phi_{z}(x) = c_{\phi} + \frac{\kappa_{\phi}}{2} \|x - u_{\phi} \|_2^2 \leqslant f(x), \quad \forall x \in \mathbb{R}^n. \\
        c_{\phi} := f(z) - \frac{1 - 2\alpha + 3\alpha^2}{(1 - \alpha)^2 \mu} \|\wtgg f(z) \|_2^2 - \frac{2\delta^2}{(1 - \alpha)^2 \mu}, \\
        \kappa_{\phi} := \mu / 2, \\
        u_{\phi} := z - \frac{2}{\mu} \widetilde{\nabla}f(z).
    \end{gathered}
\end{equation*}
\end{lemma}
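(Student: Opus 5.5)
Combine strong convexity at the point $z$ with the upper estimate of Lemma~\ref{accelerated evaluated quadratic estimation}. Fix $z$ and take any $x\in\mathbb{R}^n$. Strong convexity~\eqref{strong conv}, with the roles of the points set so that the base point is $z$, gives
\begin{equation*}
    f(x) \geqslant f(z) + \langle \nabla f(z), x - z\rangle + \frac{\mu}{2}\|x - z\|_2^2 .
\end{equation*}
The right-hand side contains exactly the combination $\langle \nabla f(z), x - z\rangle + \frac{\mu}{2}\|x-z\|_2^2$ that appears on the right of the first inequality of Lemma~\ref{accelerated evaluated quadratic estimation}.

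Rearranging that first inequality bounds this combination from below:
\begin{equation*}
    \langle \nabla f(z), x - z\rangle + \frac{\mu}{2}\|x-z\|_2^2 \geqslant \frac{\mu}{4}\left\|x - z + \frac{2}{\mu}\wtgg f(z)\right\|_2^2 - \frac{1-2\alpha+3\alpha^2}{(1-\alpha)^2\mu}\|\wtgg f(z)\|_2^2 - \frac{2\delta^2}{(1-\alpha)^2\mu}.
\end{equation*}
Substituting this into the strong convexity bound gives $f(x)\geqslant c_\phi + \frac{\mu}{4}\|x-u_\phi\|_2^2$. Here $x - z + \frac{2}{\mu}\wtgg f(z) = x - u_\phi$, since $u_\phi = z - \frac{2}{\mu}\wtgg f(z)$. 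Since $\frac{\mu}{4} = \frac{\kappa_\phi}{2}$ with $\kappa_\phi=\mu/2>0$, the right-hand side is $\psi(x\,|\,c_\phi,\kappa_\phi,u_\phi)$, so $\phi_z\in\Psi$.

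There is no real obstacle. The only point to check is that Lemma~\ref{accelerated evaluated quadratic estimation} is applied with its free point equal to our $x$ and its base point equal to $z$, and that the expansion constants match (the $\frac{\mu}{4}\|x-z\|^2$ from Fenchel plus the $\frac{\mu}{4}\|x-z\|^2$ from the square give the $\frac{\mu}{2}$). These were already verified in that lemma's proof. The requirement $\alpha<1$, which keeps the denominators positive, is part of condition~\eqref{noise condition}.
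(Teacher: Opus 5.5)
Your proposal is correct and is exactly the paper's argument, which the paper states in one line: apply strong convexity~\eqref{strong conv} with base point $z$, then bound $\langle \nabla f(z), x - z\rangle + \frac{\mu}{2}\|x-z\|_2^2$ from below by rearranging the first inequality of Lemma~\ref{accelerated evaluated quadratic estimation}, which gives $f(x)\geqslant c_\phi + \frac{\mu}{4}\|x-u_\phi\|_2^2$. You have simply written out the rearrangement and the identification $\kappa_\phi/2=\mu/4$ that the paper leaves implicit.
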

\begin{proof}
Proof follows directly from Lemma~\ref{accelerated evaluated quadratic estimation} and strong convexity definition~\eqref{strong conv}.
\end{proof}

\begin{lemma} \label{lem:w solution estimation}
Let $\alpha \in [0, 1/3], 0 < \mu \leqslant L$ and denote the following variables:
\begin{equation*}
    \begin{gathered}
        s = 1 - 2\alpha + 3\alpha^2, \quad m = 1 - 2\alpha - \alpha^2, \quad q = \frac{(1 - \alpha)^4}{(1 + \alpha)^4} \frac{\mu}{32 L} \\
        \omega \text{ is the largest root of the quadratic equation } m \omega^2 + (s - m) \omega - q = 0.
    \end{gathered}
\end{equation*}
Then:
\begin{equation*}
    0 < \frac{1}{250} \left(\frac{\mu}{L}\right)^{1 - 2\gamma^*} \leqslant \omega \leqslant \frac{1}{2} \left(\frac{\mu}{L}\right)^{1 - 2\gamma^*} < 1,
\end{equation*}
where $\gamma^* = \min \left\{ \frac{1}{4}, \log_{\nicefrac{\mu}{L}}(3 \alpha) \right\}$ (if $\mu = L$ or $\alpha = 0$, then $\gamma^* = \frac{1}{4}$).
\end{lemma}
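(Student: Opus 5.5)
We will work directly with the explicit largest root
\[
\omega=\frac{(m-s)+\sqrt{(s-m)^2+4mq}}{2m},\qquad s-m=4\alpha^2 .
\]
For $\alpha\in[0,1/3]$ we have $m=1-2\alpha-\alpha^2\geqslant 2/9>0$ and $q>0$. Hence the discriminant exceeds $(s-m)^2$, and $\omega>0$.

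It is convenient to rewrite the root in rationalized form,
\[
\omega=\frac{2q}{4\alpha^2+\sqrt{16\alpha^4+4mq}} .
\]
This gives two elementary two-sided bounds. First, $\omega\leqslant \sqrt{q/m}$, by dropping $4\alpha^2$ and $16\alpha^4$. Second, $\omega\leqslant q/(4\alpha^2)$, by dropping the square root. For the lower bound we use $\sqrt{a+b}\leqslant\sqrt a+\sqrt b$, so the denominator is at most $8\alpha^2+2\sqrt{mq}\leqslant 2\max\{8\alpha^2,2\sqrt{mq}\}$. This yields
\[
\omega\geqslant \tfrac12\min\Bigl\{\tfrac{q}{8\alpha^2},\sqrt{q/m}\Bigr\}.
\]
The plan is therefore to show that $\omega\asymp\min\{\sqrt q,\ q/\alpha^2\}$ and to identify this minimum with $(\mu/L)^{1-2\gamma^*}$ up to constants.

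To make that identification, set $r=\mu/L\in(0,1]$. Using the identity noted after Theorem~\ref{theorem:ce_aigm_convergence}, we get $r^{1-2\gamma^*}=\min\{\sqrt r,\ r/(9\alpha^2)\}$. We verify this by cases. If $3\alpha\leqslant r^{1/4}$, then $\log_r(3\alpha)\geqslant 1/4$, so $\gamma^*=1/4$ and $r^{1-2\gamma^*}=\sqrt r$. Otherwise $r^{1-2\gamma^*}=r\,(3\alpha)^{-2}$. The degenerate conventions $\alpha=0$ and $r=1$ fall into the first case, because $r^{1/4}\geqslant 3\alpha$ whenever $\alpha\leqslant 1/3$.

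The constants are fixed as follows, using $q=c_\alpha r/32$ with $c_\alpha=(1-\alpha)^4/(1+\alpha)^4\in[1/16,1]$ and $m\in[2/9,1]$.

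\textbf{Upper bound.} We have $\omega\leqslant\min\{\sqrt{q/m},q/(4\alpha^2)\}$. The first term satisfies $\sqrt{q/m}\leqslant\sqrt{9r/64}=\tfrac38\sqrt r\leqslant\tfrac12\sqrt r$. The second satisfies $q/(4\alpha^2)\leqslant r/(128\alpha^2)\leqslant\tfrac12\cdot r/(9\alpha^2)$. Together these give $\omega\leqslant\tfrac12 r^{1-2\gamma^*}\leqslant\tfrac12<1$.

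\textbf{Lower bound.} Here $\sqrt{q/m}\geqslant\sqrt{q}\geqslant\sqrt{r/512}\geqslant\sqrt r/23$, and $q/(8\alpha^2)\geqslant r/(4096\alpha^2)$. The latter is weaker than required: comparing with $r/(9\alpha^2)$ it gives only a constant $9/8192$ before the factor $\tfrac12$. This is where the main obstacle lies, namely obtaining the constant $1/250$ in the regime where $\alpha$ is large.

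To overcome it, we do not drop $\sqrt{16\alpha^4+4mq}$ crudely. When $\alpha^2\gtrsim\sqrt q$ we bound $\sqrt{16\alpha^4+4mq}\leqslant 4\alpha^2+2mq/(4\alpha^2)$. We also exploit that $c_\alpha$ is much larger than $1/16$ unless $\alpha$ is near $1/3$. The cleaner route is to split into the two cases $3\alpha\leqslant r^{1/4}$ and $3\alpha>r^{1/4}$. In the second case $\alpha^2> r^{1/2}/9\geqslant q^{1/2}\cdot\mathrm{const}$, so the denominator is at most $(4+\epsilon)\alpha^2$ and $\omega\geqslant q/((4+\epsilon)\alpha^2)$. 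One then checks $c_\alpha\cdot 9/(32(4+\epsilon))\geqslant 1/250$ over the range. If $c_\alpha$ near $\alpha=1/3$ is too small for this, we instead bound $c_\alpha\alpha^{-2}$ jointly as a function of $\alpha$ on $[r^{1/4}/3,1/3]$, with any small leftover slack absorbed numerically.

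In the first case we need $\omega\geqslant\sqrt r/250$. This follows from $\omega\geqslant 2q/(8\alpha^2+2\sqrt{mq})$ with $8\alpha^2\leqslant\tfrac89\sqrt r$ and $2\sqrt{mq}\leqslant\sqrt r/4$, which gives $\omega\geqslant 2(r/512)/(1.2\sqrt r)\geqslant\sqrt r/310$. If this falls short of $1/250$, we refine using $c_\alpha\geqslant((1-\alpha)/(1+\alpha))^4$ evaluated at the actual $\alpha\leqslant r^{1/4}/3$. We expect these numeric constant checks, rather than the structure of the argument, to be the only delicate part.
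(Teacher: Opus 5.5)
\textbf{There is a genuine gap in the lower bound.} Your upper bound via $\omega\leqslant\min\{\sqrt{q/m},\,q/(4\alpha^2)\}$ is correct, and so is the identity $r^{1-2\gamma^*}=\min\{\sqrt r,\,r/(9\alpha^2)\}$ with $r=\mu/L$. The gap is the lower bound with the constant $1/250$, which is the real content of the lemma. You leave it as ``expected'' numerics, and the estimates you actually write down do not reach it:
\begin{itemize}
\item In the case $9\alpha^2\leqslant\sqrt r$ you get only $\sqrt r/310$, and the min-bound gives only $\tfrac{9}{8192}\cdot r/(9\alpha^2)$. Both are short of $1/250$.
\item The step $2\sqrt{mq}\leqslant\sqrt r/4$ is false: at $\alpha=0$, $2\sqrt{mq}=\sqrt{r/8}$.
\item The $\epsilon$ in the second case is never quantified.
\item The case-one ``refinement'' via $c_\alpha\geqslant((1-\alpha)/(1+\alpha))^4$ is vacuous, since that is the definition of $c_\alpha$.
\end{itemize}
The underlying defect is structural. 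In $\omega=2q/(4\alpha^2+\sqrt{16\alpha^4+4mq})$ you bound $q$ from below in the numerator ($q\geqslant r/512$) but from above in the denominator ($mq\leqslant r/32$). With these decoupled bounds even the exact root yields only $\omega\geqslant\sqrt r/259$ when $9\alpha^2\leqslant\sqrt r$, and only $\omega\geqslant r/(2332\,\alpha^2)$ when $9\alpha^2>\sqrt r$. So within your scheme $1/250$ cannot be reached unless you track the joint $\alpha$-dependence of $m$ and $c_\alpha$, which you do not do.

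\textbf{The missing idea is monotonicity.} The positive root of $m\omega^2+4\alpha^2\omega=q$ is increasing in $q$ and decreasing in $m$. You may therefore set $q=r/512$ and $m=1$ simultaneously, i.e.\ use $\omega\geqslant (r/256)/(4\alpha^2+\sqrt{16\alpha^4+r/128})$. This gives $\omega\geqslant\sqrt r/230$ in the first case. In the second case, using $r<81\alpha^4$, it gives $\omega\geqslant r/(2068\,\alpha^2)\geqslant\frac{1}{250}\cdot\frac{r}{9\alpha^2}$.

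The paper's route is shorter still. It evaluates $P(\omega)=m\omega^2+(s-m)\omega-q$, which is increasing on $[0,\infty)$, at $\omega_0=\frac{1}{250}r^{1-2\gamma^*}$. By the definition of $\gamma^*$ we have $\alpha\leqslant\frac13 r^{\gamma^*}$, so the linear term satisfies $(s-m)\omega_0=4\alpha^2\omega_0\leqslant\frac{4}{2250}r$ uniformly in both regimes. Also $m\omega_0^2\leqslant r/62500$ and $q\geqslant r/512$. Hence $P(\omega_0)<0$, and so $\omega>\omega_0$. Each coefficient appears only once there, so it can be bounded independently, with no case split and no square roots.
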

\begin{proof}
We can proof, that $0 < m \leqslant 1$:

\begin{eqnarray*}
    m & = & 1 - 2\alpha - \alpha^2 \overset{\alpha \leqslant \nicefrac{1}{3}}{\geqslant} 1 - \frac{2}{3} - \frac{1}{9} = \frac{2}{9} > 0, \\
    m & = & 1 - 2\alpha - \alpha^2 \overset{\alpha \geqslant 0}{\leqslant} 1.
\end{eqnarray*}


\noindent From the definition of $\gamma^*$ we can make an estimation $\alpha \leqslant \frac{1}{3} \left( \frac{\mu}{L} \right)^{\gamma^*}$.
We can provide upper bound for linear term coefficient:
\begin{eqnarray*}
    s - m 
    & = & 1 - 2\alpha + 3\alpha^2 - 1 + 2\alpha + \alpha^2 = 4 \alpha^2 \leqslant \frac{4}{9} \left( \frac{\mu}{L} \right)^{2\gamma^*}.
\end{eqnarray*}

From $q$ definition:
\begin{equation*}
    \frac{1}{512} \cdot \frac{\mu}{L} \overset{\alpha \leqslant 1 / 3}{\leqslant} q = \frac{(1 - \alpha)^4}{(1 + \alpha)^4} \frac{\mu}{32 L}
    \overset{\alpha \geqslant 0}{\leqslant} \quad \frac{1}{32} \cdot \frac{\mu}{L}.
\end{equation*}

Let us prove by contradiction, that $\omega \geqslant \frac{1}{250} \left(\frac{\mu}{L}\right)^{1 - 2\gamma^*}$. To that end, we assume $\omega < \frac{1}{250} \left(\frac{\mu}{L}\right)^{1 - 2\gamma^*}$. Then:

\begin{align*}
    m \omega^2 + (s - m) \omega - q 
    & < \frac{m}{250^2} \left(\frac{\mu}{L}\right)^{2(1 - 2\gamma^*)} + \frac{s - m}{250}\left(\frac{\mu}{L}\right)^{1 - 2\gamma^*} - q \\
    & \overset{m \leqslant 1}{\leqslant} \frac{1}{250^2} \left(\frac{\mu}{L}\right)^{2(1 - 2\gamma^*)} + \frac{2}{1125} \frac{\mu}{L} - \frac{1}{512} \frac{\mu}{L} \\ 
    & \leqslant \frac{\mu}{L} \left( \frac{1}{62500} \left(\frac{\mu}{L}\right)^{1 - 4\gamma^*} + \frac{2}{1125} - \frac{1}{512} \right) \overset{\gamma^* \leqslant \nicefrac{1}{4}}{\leqslant} - \frac{11473}{72000000}\frac{\mu}{L} < 0.
\end{align*}
Thus, we come to a contradiction because $\omega$ is a root of the equation $m \omega^2 + (s - m) \omega - q = 0$.

Let us also prove by contradiction, that $\omega \leqslant \frac{1}{2} \left(\frac{\mu}{L}\right)^{1 - 2\gamma^*}$. So, we assume $\omega > \frac{1}{2} \left(\frac{\mu}{L}\right)^{1 - 2\gamma^*}$. Then, there are two cases:

\begin{enumerate}
    \item $\gamma^* = \frac{1}{4}$:
    \begin{eqnarray*}
        m \omega^2 + (s - m) \omega - q
        = m \omega^2 + 4\alpha^2 \omega - q
        > \frac{2}{36} \frac{\mu}{L} - \frac{1}{32} \frac{\mu}{L} > 0.
    \end{eqnarray*}
    \item $\gamma^* < \frac{1}{4} \Longrightarrow \alpha = \frac{1}{3} \left(\frac{\mu}{L}\right)^{\gamma^*}$:
    \begin{eqnarray*}
        m \omega^2 + (s - m) \omega - q
        & > & \frac{2}{36} \left(\frac{\mu}{L}\right)^{2 - 4\gamma^*} + \frac{2}{9} \frac{\mu}{L} - \frac{1}{32} \frac{\mu}{L}
        > 0.
    \end{eqnarray*}
\end{enumerate}

In both cases we come to a contradiction, that is $\omega \leqslant \frac{1}{2} \left(\frac{\mu}{L}\right)^{1 - 2\gamma^*}$.

\end{proof}

\begin{lemma} \label{lem:accelerated_step_condition}
    Let $f$ be a $\mu$-strongly convex~\eqref{strong conv} and $L$-smooth function, $\widetilde{\nabla}{f}$ satisfies composite error condition~\eqref{noise condition} with $\alpha \leqslant \frac{1}{3}$. Let $x, u \in \R^n, A \geqslant 0, c \in \R$, such that:
    \begin{equation*}
        \begin{gathered}
            \psi(v) = c + \frac{\mu}{4} \|v - u \|_2^2, \quad f(x) \leqslant c + A,
            \\
            h = \frac{1 - \alpha}{(1 + \alpha)^2} \frac{1}{4 L}, \quad q = \frac{(1 - \alpha)^4}{(1 + \alpha)^4} \frac{\mu}{32 L},
            \\ 
            \omega \text{ -- positive root of } \omega (1 - 2\alpha + 3 \alpha^2) - \omega (1 - \omega) (1 - 2\alpha - \alpha^2) = q,
            \\
            y = \frac{\omega u + x}{1 + \omega}, \quad \widehat{x} = y - h \wtgg f(y),
            \\
            \widehat{\psi}(v) = (1 - \omega) \psi(v) + \omega \phi_{y}(v), 
        \end{gathered}
    \end{equation*}
    where $\phi_{y}(v)$ -- function defined at Lemma~\ref{lower bound}, that is:
    \begin{equation*}
        \phi_{y}(v) = f(y) - \frac{1 - 2\alpha + 3\alpha^2}{(1 - \alpha)^2 \mu} \|\wtgg f(y) \|_2^2 - \frac{2\delta^2}{(1 - \alpha)^2 \mu} + \frac{\mu}{4} \left\|v - y + \frac{2}{\mu} \wtgg f(y) \right \|_2^2.
    \end{equation*}
    Then:
    \begin{equation*}
        f(\widehat{x}) \leqslant \min_{v \in \R^n} \widehat{\psi}(v) + (1 - \omega) A
    + \frac{\delta^2}{4 (1 + \alpha)^2 L} + \omega \frac{4\delta^2}{(1 - \alpha)^2 \mu},
    \end{equation*}
    where:
    \begin{eqnarray*}
        \min_{v \in \R^n} \widehat{\psi}(v) & = & (1 - \omega) c + \omega \left( f(y) - \frac{1 - 2\alpha + 3\alpha^2}{(1 - \alpha)^2 \mu} \|\wtgg f(y) \|_2^2 - \frac{2\delta^2}{(1 - \alpha)^2 \mu}  \right)
        \\
        & + & \frac{\omega (1 - \omega) \mu}{4} \left \|y - u - \frac{2}{\mu} \widetilde{\nabla} f(y) \right \|_2^2.
    \end{eqnarray*}
\end{lemma}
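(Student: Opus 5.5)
The plan is the standard one-step estimate-sequence argument: compute $\min_v \widehat\psi(v)$ explicitly, then bound $f(\widehat x)$ by a gradient-step inequality at $y$, a convexity step from $y$ toward $x$, and the lower quadratic bound of Lemma~\ref{accelerated evaluated quadratic estimation}. The equation defining $\omega$ is used to make the $\|\wtgg f(y)\|_2^2$ terms cancel exactly. Write $g=\wtgg f(y)$, $s=1-2\alpha+3\alpha^2$, $m=1-2\alpha-\alpha^2$. Throughout I use $0<\omega<1$, which follows from Lemma~\ref{lem:w solution estimation}; this matters because $(1-\omega)$ multiplies inequalities below.

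\textbf{Step 1 (the minimum of $\widehat\psi$).} Apply Lemma~\ref{psi comb} with $\eta_1=1-\omega$, $\eta_2=\omega$, $\kappa_1=\kappa_2=\mu/2$, centers $u$ and $u_\phi=y-\frac{2}{\mu}g$. Then $\kappa_3=\mu/2$. The cross term is
\[
\frac{(1-\omega)\omega(\mu/2)^2}{2\cdot\mu/2}\,\|u-u_\phi\|_2^2=\frac{\omega(1-\omega)\mu}{4}\Bigl\|y-u-\tfrac{2}{\mu}g\Bigr\|_2^2 .
\]
Minimizing over $v$ removes the quadratic part, which gives the stated formula for $\min_v\widehat\psi(v)$.

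\textbf{Step 2 (upper bound on $f(\widehat x)$).} By Lemma~\ref{lem:gd_inexact_step_inexact_bound} with $h$ as given,
\[
f(\widehat x)\le f(y)-\frac{(1-\alpha)^2}{(1+\alpha)^4}\frac{1}{32L}\|g\|_2^2+\frac{\delta^2}{4(1+\alpha)^2L}.
\]
The gradient coefficient equals $\frac{q}{(1-\alpha)^2\mu}$ by the definition of $q$. Split $f(y)=(1-\omega)f(y)+\omega f(y)$ and treat the first part separately.

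\textbf{Step 3 (convexity toward $x$).} From $y=\frac{\omega u+x}{1+\omega}$ we get $y-x=\omega(u-y)$. Convexity (implied by strong convexity) and $f(x)\le c+A$ give
\[
f(y)\le f(x)+\langle\nabla f(y),y-x\rangle\le c+A+\omega\langle\nabla f(y),u-y\rangle .
\]
Now bound the inner product with the second inequality of Lemma~\ref{accelerated evaluated quadratic estimation}, taking $x=u$, $z=y$:
\[
\langle\nabla f(y),u-y\rangle\le\frac{\mu}{4}\Bigl\|u-y+\tfrac{2}{\mu}g\Bigr\|_2^2-\frac{m}{(1-\alpha)^2\mu}\|g\|_2^2+\frac{2\delta^2}{(1-\alpha)^2\mu}.
\]
Note that $\|u-y+\frac{2}{\mu}g\|_2=\|y-u-\frac{2}{\mu}g\|_2$, the norm appearing in Step~1.

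\textbf{Step 4 (collecting terms).} Combining Steps 2 and 3 gives
\[
f(\widehat x)\le(1-\omega)(c+A)+\omega f(y)+\frac{\omega(1-\omega)\mu}{4}\Bigl\|y-u-\tfrac{2}{\mu}g\Bigr\|_2^2+E,
\]
where $E$ is the sum of the $\|g\|_2^2$ terms and the $\delta^2$ terms.

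The $\|g\|_2^2$ terms have coefficient $-\frac{\omega(1-\omega)m+q}{(1-\alpha)^2\mu}$. The defining equation of $\omega$, namely $\omega s-\omega(1-\omega)m=q$, makes this equal to $-\frac{\omega s}{(1-\alpha)^2\mu}$, which is exactly the coefficient in $\min\widehat\psi$.

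The $\delta^2$ terms are $\omega(1-\omega)\frac{2\delta^2}{(1-\alpha)^2\mu}$ and $\frac{\delta^2}{4(1+\alpha)^2L}$. Adding and subtracting $\omega\frac{2\delta^2}{(1-\alpha)^2\mu}$ to match the $\min\widehat\psi$ expression leaves a surplus of at most
\[
\omega(2-\omega)\frac{2\delta^2}{(1-\alpha)^2\mu}\le\omega\frac{4\delta^2}{(1-\alpha)^2\mu},
\]
together with $\frac{\delta^2}{4(1+\alpha)^2L}$. The $A$-term is $(1-\omega)A$, which completes the claimed bound.

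The main obstacle is the bookkeeping in Step~4. One must check that the $(1-\alpha)^2$ normalization of $q$ matches the gradient-step coefficient, so that the choice of $\omega$ cancels the $\|g\|_2^2$ terms exactly. One must also check that the leftover $\delta^2$ terms fit inside $4\omega\delta^2/((1-\alpha)^2\mu)$.
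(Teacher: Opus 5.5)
Your proof is correct and uses the same ingredients as the paper. These are Lemma~\ref{psi comb} for the minimum, the second inequality of Lemma~\ref{accelerated evaluated quadratic estimation} at $(u,y)$, convexity with $y-x=\omega(u-y)$ to cancel the linear term, the defining equation of $\omega$ to match the $\|\wtgg f(y)\|_2^2$ coefficients, the bound $\omega(2-\omega)\leqslant 2\omega$, and Lemma~\ref{lem:gd_inexact_step_inexact_bound}; the only cosmetic difference is that you upper-bound $f(\widehat x)$ directly, whereas the paper lower-bounds $\min_v\widehat\psi(v)$ and applies the gradient-step lemma last.
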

\begin{proof}
Lemma~\ref{lem:w solution estimation} guarantees, that $0 < \omega < 1$. Using Lemma~\ref{psi comb}:
\begin{eqnarray*}
    \widehat{c} 
    & = & \min_{v \in \R^n} \widehat{\psi}(v) = (1 - \omega) c + \omega \left( f(y) - \frac{1 - 2\alpha + 3\alpha^2}{(1 - \alpha)^2 \mu} \|\wtgg f(y) \|_2^2 - \frac{2\delta^2}{(1 - \alpha)^2 \mu}  \right)
    \\
    & + & \frac{\omega (1 - \omega) \mu}{4} \left \|y - u - \frac{2}{\mu} \widetilde{\nabla} f(y) \right \|_2^2.
\end{eqnarray*}
From Lemma~\ref{accelerated evaluated quadratic estimation} we can estimate last term:
\begin{eqnarray*}
    \frac{\omega (1 - \omega) \mu}{4} \left \|y - u - \frac{2}{\mu} \widetilde{\nabla} f(y) \right \|_2^2 
    & \geqslant & \omega (1 - \omega) \langle \nabla f(y), u - y \rangle
    \\
    & + & \omega (1 - \omega) \frac{1 - 2\alpha - \alpha^2}{(1 - \alpha)^2 \mu} \|\wtgg f(y) \|_2^2 - \omega(1 - \omega) \frac{2\delta^2}{(1 - \alpha)^2 \mu}.
\end{eqnarray*}
Grouping the corresponding terms:
\begin{eqnarray*}
    \widehat{c}
    & \geqslant & (1 - \omega) c + \omega f(y) + \omega (1 - \omega) \langle \nabla f(y), u - y \rangle
    \\
    & - & \frac{1}{(1 - \alpha)^2 \mu} \left( \omega (1 - 2\alpha + 3 \alpha^2) - \omega (1 - \omega) (1 - 2\alpha - \alpha^2) \right) \|\wtgg f(y) \|_2^2
    \\
    & - & \omega (2 - \omega) \frac{2\delta^2}{(1 - \alpha)^2 \mu}.
\end{eqnarray*}
From convexity of function $f$:
\begin{eqnarray*}
    (1 - \omega) c
    & \geqslant & (1 - \omega) \left( f(x) - A \right)
    \\
    & \geqslant & (1 - \omega) \left( f(y) + \langle \nabla f(y), x - y \rangle \right) - (1 - \omega) A,
\end{eqnarray*}
then, using definition $y$ we can eliminate the linear form:
\begin{equation*}
     \langle \nabla f(y), \omega (1 - \omega) (u - y) + (1 - \omega) (x - y) \rangle = 0
\end{equation*}
and using definition of $\omega$ we continue $\widehat{c}$ estimation:
\begin{eqnarray*}
    \widehat{c}
    & \overset{\omega \geqslant 0}{\geqslant} & f(y) - (1 - \omega) A
    - \frac{q}{(1 - \alpha)^2 \mu} \|\wtgg f(y) \|_2^2
    - \omega \frac{4\delta^2}{(1 - \alpha)^2 \mu}
    \\
    & \geqslant & f(y) - \frac{(1 - \alpha)^2}{(1 + \alpha)^4} \frac{1}{32 L} \|\wtgg f(y) \|_2^2 - (1 - \omega) A - \omega \frac{4\delta^2}{(1 - \alpha)^2 \mu}
\end{eqnarray*}
Since $\widehat{x} = y - h \wtgg f(y)$, where $h$ selected consistently with the Lemma~\ref{lem:gd_inexact_step_inexact_bound} we can apply that Lemma:
\begin{equation*}
        f(\widehat{x}) \leqslant f(y) - \frac{(1 - \alpha)^2}{(1 + \alpha)^4} \frac{1}{32 L} \|\wtgg f(y) \|_2^2 + \frac{\delta^2}{4 (1 + \alpha)^2 L},
\end{equation*}
thus:
\begin{eqnarray*}
    \widehat{c}
    & \geqslant & f(\widehat{x}) - (1 - \omega) A
    - \frac{\delta^2}{4 (1 + \alpha)^2 L} - \omega \frac{4\delta^2}{(1 - \alpha)^2 \mu}.
\end{eqnarray*}
\end{proof}

Now we can proof convergence of Algorithm~\ref{alg ce-aigm}. We assume that the method accepts $L_0 \geqslant \mu$. Let us denote $\whL_k, \omega_k, c_k, A_k$ as values selected at step $k$ after the completion of the "while" loop.  Let us define, recursively, a pair of sequences $\{\psi_k(x)\}_{k \geqslant 0}, \{\lambda_k\}_{k \geqslant 0}$ corresponding to Algorithm~\ref{alg ce-aigm} as follows:
\begin{align}
    &  \lambda_0 = 1, \quad {c_0 = f(x^0)}, \quad \lambda_{k+1} = (1 - \omega_k) \lambda_k, \quad \forall k \geqslant 0, \label{eq:lambda_def}
    \\& \psi_k(x| c_k, \mu / 2, u^k) = c_k + \frac{\mu}{4} \|x - u^k\|_2^2, \quad \forall k \geqslant 0.  \label{psi^k_Alg1}, \\
    & \psi_{k + 1}(x | c_{k + 1}, \mu / 2, u^{k + 1}) = (1 - \omega_k) \psi_k(x | c_k, \mu / 2, u^k) +  \omega_k \phi_{y^k}(x), \quad \forall k \geqslant 0, \label{psi^k_combination} \\
    & \phi_{y^k}(x) = f(y^k) - \frac{1 - 2\alpha + 3\alpha^2}{(1 - \alpha)^2 \mu} \|\wtgg f(y^k) \|_2^2 - \frac{2\delta^2}{(1 - \alpha)^2 \mu} + \frac{\mu}{4} \left\|x - y^k + \frac{2}{\mu} \wtgg f(y^k) \right \|_2^2, \\
    & A_{k + 1} = (1 - \omega_k) A_k + \frac{\delta^2}{4 (1 + \alpha)^2 \whL_k} + \omega_k \frac{4\delta^2}{(1 - \alpha)^2 \mu}, \quad A_0 = 0. \label{eq:Ak_def}
\end{align}

\begin{lemma} \label{lem:alg_ce_aigm_correctnes}
    Let functions $\{ \psi_k \}_k$ defined at~\eqref{psi^k_Alg1} and~\eqref{psi^k_combination}. Then:
    \begin{eqnarray*}
            c_{k + 1} & = & (1 - \omega_k) c_k + \omega_k \left( f(y^k) - \frac{1 - 2\alpha + 3\alpha^2}{(1 - \alpha)^2 \mu} \|\wtgg f(y^k) \|_2^2 - \frac{2\delta^2}{(1 - \alpha)^2 \mu} \right)
            \\
            & + & \frac{\omega_k (1 - \omega_k) \mu}{4} \left \|y^k - u^k - \frac{2}{\mu} \widetilde{\nabla} f(y^k) \right \|_2^2, \\
            u^{k + 1} & = & (1 - \omega_k) u^k + \omega_k y^k - \frac{2 \omega_k}{\mu} \widetilde{\nabla} f(y^k).
    \end{eqnarray*}
    That is sequence $\{ u^k \}_k$ in the Algorithm~\ref{alg ce-aigm} corresponds sequence $\{ \psi_k \}_k$.
\end{lemma}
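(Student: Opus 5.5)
The argument is an induction on $k$ that applies Lemma~\ref{psi comb} to the recursion~\eqref{psi^k_combination}. Suppose that $\psi_k(x)=c_k+\frac{\mu}{4}\|x-u^k\|_2^2$, that is, $\psi_k\in\Psi$ with curvature $\kappa=\mu/2$. The base case $k=0$ holds by the initialization $c_0=f(x^0)$, $u^0=x^0$ in Algorithm~\ref{alg ce-aigm}.

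For the inductive step, write $\phi_{y^k}$ in the form of Lemma~\ref{lower bound}:
\[
\phi_{y^k}(x)=c_\phi+\frac{\mu}{4}\|x-u_\phi\|_2^2,
\qquad
u_\phi=y^k-\frac{2}{\mu}\wtgg f(y^k),
\]
\[
c_\phi=f(y^k)-\frac{s}{(1-\alpha)^2\mu}\|\wtgg f(y^k)\|_2^2-\frac{2\delta^2}{(1-\alpha)^2\mu}.
\]
Now apply Lemma~\ref{psi comb} with $\eta_1=1-\omega_k$, $\eta_2=\omega_k$ and $\kappa_1=\kappa_2=\mu/2$. This needs $\eta_1,\eta_2>0$, i.e. $0<\omega_k<1$. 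Lemma~\ref{lem:w solution estimation} supplies this with $L$ replaced by $\whL_k\geqslant L_0\geqslant\mu$; the proof of that lemma only uses $\mu\leqslant L$, so it applies to $\whL_k$ verbatim.

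The lemma gives the three quantities directly:
\begin{itemize}
\item $\kappa_3=(1-\omega_k+\omega_k)\mu/2=\mu/2$, so $\psi_{k+1}$ again has curvature $\mu/2$;
\item $w=(1-\omega_k)u^k+\omega_k u_\phi=(1-\omega_k)u^k+\omega_k y^k-\frac{2\omega_k}{\mu}\wtgg f(y^k)$;
\item $c_3=(1-\omega_k)c_k+\omega_k c_\phi+\frac{(1-\omega_k)\omega_k(\mu/2)^2}{2\cdot\mu/2}\|u^k-u_\phi\|_2^2$.
\end{itemize}
The coefficient in $c_3$ simplifies to $\omega_k(1-\omega_k)\mu/4$. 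Since $u^k-u_\phi=-(y^k-u^k-\frac{2}{\mu}\wtgg f(y^k))$, the norm is exactly the one in the claimed formula for $c_{k+1}$.

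Finally, compare these expressions with the \textsc{Step} function: $\widehat u$ equals $w$, $\widehat c$ equals $c_3$, and the pair is computed with the accepted $\omega_k$ and $y^k$. Hence $u^{k+1}$ and $c_{k+1}$ produced by the algorithm coincide with the center and minimum value of $\psi_{k+1}$, which closes the induction.

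There is no real obstacle. The only points needing care are:
\begin{itemize}
\item checking the positivity $0<\omega_k<1$ needed by Lemma~\ref{psi comb}, using $L_0\geqslant\mu$;
\item noting that the rejected trial steps inside the while loop do not matter, because only the final accepted values define $\psi_{k+1}$.
\end{itemize}
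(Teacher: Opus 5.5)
Your proposal is correct and is essentially the paper's proof. The paper observes that $\psi_k$ and $\phi_{y^k}$ both lie in $\Psi$ with curvature $\mu/2$, applies Lemma~\ref{psi comb} to $(1-\omega_k)\psi_k+\omega_k\phi_{y^k}$, and reads off the center and minimal value. Your computation of the coefficient $\omega_k(1-\omega_k)\mu/4$ and of the sign of $u^k-u_\phi$, and your check that $0<\omega_k<1$ (via Lemma~\ref{lem:w solution estimation} with $\whL_k\geqslant L_0\geqslant\mu$), fill in details the paper leaves implicit.
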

\begin{proof}
    Since $\psi_{k + 1}(x | c_{k + 1}, \mu / 2, u^{k + 1}) = (1 - \omega_k) \psi_k(x | c_k, \mu / 2, u^k) + \omega_k \phi_{y^k}(x)$ and $\psi_k, \phi_{y^k} \in \Psi$, we can apply Lemma~\ref{psi comb} and obtained the relations described above. 
\end{proof}

\begin{lemma} \label{lem:adaptive_step_condition}
    Let $f$ be a $\mu$-strongly convex~\eqref{strong conv} and $L$-smooth function, $\widetilde{\nabla}{f}$ satisfies composite error condition~\eqref{noise condition} with $\alpha \leqslant \frac{1}{3}$, then $\forall k \geqslant 0 \; f(x^k) \leqslant c_{k} + A_k$ for Algorithm~\ref{alg ce-aigm} with $L_0 \geqslant \mu$. In this case, the total number of iterations of the inner loop can be estimated as $N + J_* - 1$, where:
    \begin{equation*}
        \max\limits_{0 \leqslant k \leqslant N - 1} J_k \leqslant J_* = \left\lceil \max \left \lbrace 0, \log_2\left(\frac{L}{L_0} \right) \right \rbrace \right\rceil.
    \end{equation*}
    And the total number of inexact gradient $\wtgg f$ evaluations over $N$ iterations of Algorithm~\ref{alg ce-aigm} is estimated as $2N + J_* - 1$.
\end{lemma}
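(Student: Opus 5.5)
We argue by induction on $k$ that the invariant $f(x^k) \leqslant c_k + A_k$ holds after each accepted step. For $k = 0$ we have $c_0 = f(x^0)$ and $A_0 = 0$, so $f(x^0) \leqslant c_0 + A_0$ trivially. For the inductive step, note that the \textbf{while}-loop test $f(x^{k+1}) \leqslant c_{k+1} + A_{k+1}$ is precisely the invariant at $k+1$. Hence whenever the inner loop terminates, the invariant propagates automatically. The substantive claim is therefore that the inner loop always terminates, with the stated bound on $J_k$.

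For termination we invoke Lemma~\ref{lem:accelerated_step_condition} with $L$ replaced by $\whL = 2^{J_k} L_0$. Once $\whL \geqslant L$, the function $f$ is also $\whL$-smooth, so the lemma applies with $\whL$ in the role of $L$. Its hypotheses are then satisfied:
\begin{itemize}
\item $f(x^k) \leqslant c_k + A_k$ holds by the induction hypothesis;
\item $\psi_k(v) = c_k + \frac{\mu}{4}\|v-u^k\|_2^2$ is as in~\eqref{psi^k_Alg1};
\item $h$, $q$, $y$, $\widehat{x}$ are exactly those computed in Step;
\item $\omega$ from Step is the largest root of $m\omega^2 + (s-m)\omega - q = 0$, which is the same equation as $\omega s - \omega(1-\omega)m = q$.
\end{itemize}
We also need $0<\omega<1$. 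This follows from Lemma~\ref{lem:w solution estimation}, whose requirement $\mu \leqslant \whL$ holds because $\whL \geqslant L_0 \geqslant \mu$. This is the only place the assumption $L_0 \geqslant \mu$ is used.

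Lemma~\ref{lem:accelerated_step_condition} then gives $f(\widehat{x}) \leqslant \min_v \widehat{\psi}(v) + (1-\omega)A_k + \frac{\delta^2}{4(1+\alpha)^2\whL} + \omega\frac{4\delta^2}{(1-\alpha)^2\mu}$. The right-hand side equals $\widehat{c} + \widehat{A}$ as computed in Step: Lemma~\ref{lem:alg_ce_aigm_correctnes} identifies $\min_v\widehat{\psi} = \widehat{c}$, and the remaining terms match the definition of $\widehat{A}$. So the loop exits no later than the first $J_k$ with $2^{J_k}L_0 \geqslant L$.

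Next we bound $\max_k J_k$. Since $J_0 = 0$ and $J_{k+1} = \max\{0, J_k - 1\}$, the index entering iteration $k$ is at most its value at exit from iteration $k-1$, minus one. By the previous paragraph, $J_k$ is incremented only while $2^{J_k}L_0 < L$, which forces $J_k \leqslant \lceil \max\{0,\log_2(L/L_0)\}\rceil = J_*$ at exit. Induction on $k$ then gives $J_k \leqslant J_*$ for all $k$.

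Finally we count oracle calls with the same telescoping argument as in Lemma~\ref{lem:adaptive_gradient_gap_estimation}. Iteration $k$ performs (exit index) $-$ (entry index) $+1$ Step calls. Each subsequent entry index is the previous exit index minus one, so the sum telescopes to at most $N - 1 + J_*$ (inner loop bound $N + J_* - 1$). Each Step evaluates $\wtgg f$ once at $y$, which gives the stated $2N + J_* - 1$ by the same accounting as in Lemma~\ref{lem:adaptive_gradient_gap_estimation}.

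The main obstacle is checking that Lemma~\ref{lem:accelerated_step_condition} is legitimately applied with the estimate $\whL$ in place of $L$. This requires confirming two things: that its proof uses smoothness only through Lemma~\ref{lem:gd_inexact_step_inexact_bound}, which holds for any $\whL \geqslant L$; and that every use of $q$ is consistent with $\whL$.
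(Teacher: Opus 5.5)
Your proposal is correct and follows the paper's own argument: the while-loop exit test is itself the invariant, Lemma~\ref{lem:accelerated_step_condition} applied with $\whL\geqslant L$ in place of $L$ forces exit (hence $J_k\leqslant J_*$), and the oracle count uses the same telescoping. Two small bookkeeping fixes: first, it is the number of while-body executions $\sum_k(\text{exit}_k-\text{entry}_k)$ that telescopes to at most $N+J_*-1$, and the Step calls (one $\wtgg f$ evaluation each) number $N$ more, i.e.\ at most $2N+J_*-1$; second, at the terminating trial $\mu\leqslant L\leqslant\whL$ holds regardless of $L_0$, so $L_0\geqslant\mu$ is really what keeps every \emph{accepted} $\omega_k\in(0,1)$ (including when $\whL_k<L$), which gives $A_k\geqslant 0$, a hypothesis of Lemma~\ref{lem:accelerated_step_condition} that you did not check.
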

\begin{proof}
    Condition $f(x^k) \leqslant c_k + A_k$ follows from definition of $c_k, A_k$ and point $x^k$ obtained after "while" loop termination. It should be noted that $f(x^0) = c_0 + A_0$, then using induction and Lemma~\ref{lem:accelerated_step_condition} from $f(x^k) \leqslant c_k + A_k$ follows $f(x^{k + 1}) \leqslant c_{k + 1} + A_{k + 1}$. By Lemma~\ref{lem:accelerated_step_condition} the inner loop is guaranteed to terminate, when $\whL_k \geqslant L$, thus the total number of inner loop iterations over the entire execution of the algorithm can be estimated as:
    \begin{equation*}
        J_0 + \sum_{k = 0}^{N - 2} (J_{k + 1} - J_k + 1) = J_{N - 1} + N - 1 \leqslant J_* + N - 1,
    \end{equation*}
    and inexact gradient $\wtgg f$ calls as $2 N + J_* - 1$.
\end{proof}

\begin{lemma} \label{upper bound for lemma}
Let $\{\lambda_k\}_{k \geqslant 0}$ be a sequence defined in~\eqref{eq:lambda_def} and $\{\psi_k\}_{k \geqslant 0}$ be the corresponding functions~\eqref{psi^k_Alg1}. Then 
\begin{equation*}
    \forall k \geqslant 0 \;\; \text{and} \;\; \forall x \in \mathbb{R}^n \quad \psi_k(x) \leqslant \lambda_k \psi_0(x) + (1 - \lambda_k) f(x).
\end{equation*}
\end{lemma}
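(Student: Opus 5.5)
The plan is induction on $k$, following the standard estimate-sequence argument of Nesterov. For $k=0$ we have $\lambda_0=1$, so the claim reads $\psi_0(x)\leqslant\psi_0(x)$, which is trivial.

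For the inductive step, assume $\psi_k(x)\leqslant\lambda_k\psi_0(x)+(1-\lambda_k)f(x)$ for all $x$. By the defining recursion~\eqref{psi^k_combination},
\begin{equation*}
\psi_{k+1}(x)=(1-\omega_k)\psi_k(x)+\omega_k\phi_{y^k}(x).
\end{equation*}
Lemma~\ref{lower bound}, applied with $z=y^k$, gives $\phi_{y^k}(x)\leqslant f(x)$ for every $x$. This uses only strong convexity and the composite error condition, so it holds independently of which $\whL_k$ was accepted.

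Since $0<\omega_k<1$, both coefficients $1-\omega_k$ and $\omega_k$ are positive, so we may substitute the two bounds. This yields
\begin{equation*}
\psi_{k+1}(x)\leqslant(1-\omega_k)\lambda_k\psi_0(x)+\bigl((1-\omega_k)(1-\lambda_k)+\omega_k\bigr)f(x).
\end{equation*}
By~\eqref{eq:lambda_def}, $(1-\omega_k)\lambda_k=\lambda_{k+1}$. Moreover,
\begin{equation*}
(1-\omega_k)(1-\lambda_k)+\omega_k=1-(1-\omega_k)\lambda_k=1-\lambda_{k+1}.
\end{equation*}
This closes the induction.

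The only point needing care is $\omega_k\in(0,1)$. The quantity $\omega_k$ is computed in Step with $\whL=\whL_k=2^{J_k}L_0\geqslant L_0\geqslant\mu$. The bounds on $\omega$ in Lemma~\ref{lem:w solution estimation} depend only on the algebra of the quadratic with parameters $\alpha\leqslant 1/3$ and $\mu\leqslant\whL$. The true $L$ plays no role there, so we apply that lemma with $L$ replaced by $\whL_k$ and obtain $0<\omega_k\leqslant\frac12<1$.

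We also need that the root chosen in the algorithm, $\omega=\frac{(m-s)+\sqrt{(s-m)^2+4mq}}{2m}$, is the largest root referenced in that lemma. This is immediate because $m>0$. With $\omega_k\in(0,1)$ in hand, nothing else is delicate.
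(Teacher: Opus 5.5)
Your proof is correct and follows the paper's argument: induction with the trivial base case $\lambda_0=1$, then the recursion~\eqref{psi^k_combination}, the bound $\phi_{y^k}\leqslant f$ from Lemma~\ref{lower bound}, and $\lambda_{k+1}=(1-\omega_k)\lambda_k$. Your explicit check that $\omega_k\in(0,1)$ is correct: you apply Lemma~\ref{lem:w solution estimation} with $L$ replaced by $\whL_k\geqslant L_0\geqslant\mu$ and identify the algorithm's root as the largest one. The paper's proof uses this fact only implicitly.
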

\begin{proof}
By the principle of induction. For $k = 0$, the inequality is obvious. Assume that inequality takes place for $k$. From \eqref{psi^k_combination} and Lemma \ref{lower bound}, we get 
\begin{align*}
    \psi_{k + 1}(x)  &= (1 - \omega_k) \psi_k(x) + \omega_k \phi_{y^k}(x) 
    \\& \leqslant (1 - \omega_k) \left( (1 - \lambda_k) f(x) + \lambda_k \psi_0(x) \right) + \omega_k f(x) \\
    & = \lambda_{k + 1} \psi_0(x) + (1 - \lambda_{k + 1}) f(x).
\end{align*}
\end{proof}

\begin{proof}[Proof of the Theorem~\ref{theorem:ce_aigm_convergence}]
    From Lemma~\ref{lem:adaptive_step_condition} we obtain:
    \begin{equation*}
        f(x^N) \leqslant c_N + A_N = \min_{x \in \R^n} \psi_N(x) + A_N.
    \end{equation*}
    Applying Lemma~\ref{upper bound for lemma} and then Lemma~\ref{conv lemma}:
    \begin{eqnarray*}
        f(x^N) - f^* \leqslant \lambda_N (\psi_0(x^*) - f^*) + A_N
        \overset{\eqref{psi^k_Alg1}}{=} \lambda_N \left( f(x^0) - f^* + \frac{\mu}{4} \|x^0 - x^* \|_2^2 \right) + A_N.
    \end{eqnarray*}

    From definition of $\{ \lambda_k \}_k$~\eqref{eq:lambda_def} we obtain $\lambda_N = \prod\limits_{k = 0}^{N - 1} (1 - \omega_k)$. Let us denote $\gamma_k^* = \min \left\{ \frac{1}{4}, \log_{\frac{\mu}{\whL_k}} \left(3 \alpha \right) \right\}$ (if $\mu = \whL_k$ or $\alpha = 0$, then $\gamma_k^* = \frac{1}{4}$), then using Lemma~\ref{lem:w solution estimation} -- $\frac{1}{250} \left( \frac{\mu}{\whL_k} \right)^{1 - 2\gamma^*_k} \leqslant \omega_k$. Since $\whL_k \leqslant L_*$, then $\gamma^* \leqslant \gamma_k^*$, that is $\frac{1}{250} \left( \frac{\mu}{L_*} \right)^{1 - 2\gamma^*} \leqslant \omega_k$. Then $\lambda_N \leqslant \left(1 - \frac{1}{250} \left( \frac{\mu}{L_*} \right)^{1 - 2\gamma^*} \right)^N \leqslant \exp \left( - \frac{1}{250} \left( \frac{\mu}{L_*} \right)^{1 - 2\gamma^*} N \right)$.

    Let us denote: 
    \begin{equation*}
        b_k = \frac{\delta^2}{4 (1 + \alpha)^2 \whL_k} + \omega_k \frac{4\delta^2}{(1 - \alpha)^2 \mu},
    \end{equation*}
    then $A_N$ can be expressed as (using its definition~\ref{eq:Ak_def}).
    \begin{equation*}
        A_N
        = \sum_{k = 0}^{N - 1} b_k \prod_{j = k + 1}^{N - 1} (1 - \omega_j).
    \end{equation*}
    Since $\omega_k$ is the root of the quadratic equation $m \omega_k^2 + (s - m) \omega_k - q_k = 0$, where:
    \begin{equation*}
        s = 1 - 2\alpha + 3\alpha^2, \quad m = 1 - 2\alpha - \alpha^2, \quad q_k = \frac{(1 - \alpha)^4}{(1 + \alpha)^4} \frac{\mu}{32 \whL_k}.
    \end{equation*}
    From proof of the Lemma~\ref{lem:w solution estimation} it follows, that $m > 0$ and $\omega_k < 1$. Then $q_k = s \omega_k + m \omega_k (\omega_k - 1) \leqslant s \omega_k$. Thus:
    \begin{eqnarray*}
        b_k
        & = & \frac{\delta^2}{4 (1 + \alpha)^2 \whL_k} + \omega_k \frac{4\delta^2}{(1 - \alpha)^2 \mu}
        = \frac{8(1 + \alpha)^2 \delta^2 q_k}{(1 - \alpha)^4 \mu} + \omega_k \frac{4\delta^2}{(1 - \alpha)^2 \mu}
        \\
        & \overset{q_k \leqslant s \omega_k}{\leqslant} & 
        \left(\frac{8(1 + \alpha)^2 s}{(1 - \alpha)^4} + \frac{4}{(1 - \alpha)^2} \right) \omega_k \frac{\delta^2}{\mu}
        \\
        & = & \left(\frac{8(1 + \alpha)^2 (1 - 2\alpha + 3\alpha^2)}{(1 - \alpha)^4} + \frac{4}{(1 - \alpha)^2} \right) \omega_k \frac{\delta^2}{\mu}
        \\
        & = & \left(\frac{8(1 + \alpha)^2}{(1 - \alpha)^2} \left(1 +  \frac{2\alpha^2}{(1 - \alpha)^2} \right) + \frac{4}{(1 - \alpha)^2} \right) \omega_k \frac{\delta^2}{\mu}
        \\
        & \overset{\alpha \leqslant \nicefrac{1}{3}}{\leqslant} & \left(8 \cdot 4 \left(1 +  \frac{1}{2} \right) + 9 \right) \omega_k \frac{\delta^2}{\mu} = 57 \omega_k \frac{\delta^2}{\mu}.
    \end{eqnarray*}
    Hence:
    \begin{eqnarray*}
        A_N 
        & \leqslant & 57 \frac{\delta^2}{\mu} \sum_{k = 0}^{N - 1} \omega_k \prod_{j = k + 1}^{N - 1} (1 - \omega_j) = 57 \frac{\delta^2}{\mu} \left(1 - \prod_{k = 0}^{N - 1} (1 - \omega_k) \right)
        \\
        & = & 57 (1 - \lambda_N) \frac{\delta^2}{\mu} \leqslant 57 \frac{\delta^2}{\mu}.
    \end{eqnarray*}
    Combining inequalities above one can obtain convergence:
    \begin{eqnarray*}
        f(x^N) - f^*
        & \leqslant & \exp \left( - \frac{1}{250} \left( \frac{\mu}{L_*} \right)^{1 - 2\gamma^*} N \right) \left( f(x^0) - f^* + \frac{\mu}{4} R^2 \right)
        + 57 \frac{\delta^2}{\mu}.
    \end{eqnarray*}
\end{proof}

\begin{proof}[Proof of the Theorem~\ref{theorem:ce_aigm_convergence_non_adaptive}]
    Since $L_0 = L$, then $\forall k \geqslant 0, \; \whL_k = L_* = L_0 = L$ (see Theorem~\ref{theorem:ce_aigm_convergence}) and we can apply previous Theorem.
\end{proof}

\begin{proof}[Proof of the Theorem~\ref{theorem:convex_reg_aigm_adaptive}]
    Similar to the proof of Theorem~\ref{th:convex_reg_gd_adaptive}, let us consider $f_{\mu} = f(x) + \frac{\mu}{2} \| x - x^0 \|_2^2$. It us known, that $f_{\mu}$ is $\mu$ strongly convex~\eqref{strong conv} and $L + \mu$ smooth~\eqref{smooth cond}. Corresponding gradient estimation $\wtgg f_{\mu} = \wtgg f + \mu (x - x^0)$ satisfies composite error condition with relative error parameter $\alpha$ and absolute error $\alpha \mu R$ (by Lemma~\ref{lemma:reg noise}). Using Theorem~\ref{theorem:ce_aigm_convergence} convergence for $f_{\mu}$ one can obtain:
    \begin{eqnarray*}
        f_{\mu}(x^N | x^0) - f_{\mu}^*
        & \leqslant & \exp \left( - \frac{1}{250} \left( \frac{\mu}{L_*} \right)^{1 - 2\gamma^*} N \right) \left( f_{\mu}(x^0 | x^0) - f_{\mu}^* + \frac{\mu}{4} R_{\mu}^2 \right)
        + 57 \frac{\delta^2}{\mu},
    \end{eqnarray*}
    where:
    \begin{equation*}
        \begin{gathered}
            L_* = \max\limits_{0 \leqslant k \leqslant N - 1} \whL_k \leqslant \max \{ L_0, 2 (L + \mu) \},
            \\
            \gamma^* = \min \left \lbrace \frac{1}{4}, \log_{\frac{\mu}{L_*}} \left(3 \alpha \right) \right \rbrace, \text{ if } L_* = \mu \text{ or }\alpha = 0, \text{ then } \gamma^* = \frac{1}{4}.
            \\
            J_* = \left\lceil \max \left \lbrace 0, \log_2\left(\frac{L + \mu}{L_0} \right) \right \rbrace \right\rceil.
        \end{gathered}
    \end{equation*}
    Then (since $f_{\mu}^* = f(x_{\mu}^*) + \frac{\mu}{2} \|x_{\mu}^* - x^0 \|_2^2 \geqslant f^*$):
    \begin{eqnarray*}
        f(x^N) - f^* 
        & \overset{\text{Lemma}~\ref{lemma:reg func bound}}{\leqslant} & f_{\mu}(x^N | x^0) - f_{\mu}^* + \frac{\mu}{2} R^2
        \\
        & \overset{\text{Lemma}~\ref{lemma:reg dist bound}}{\leqslant} & \exp \left(-\frac{1}{250} \left( \frac{\mu}{L_*} \right)^{1 - 2\gamma^*} N \right) \left( f(x^0) - f^* + \frac{\mu}{4} R^2 \right)
        \\
        & + & 57 \alpha \mu R^2 + \frac{\mu R^2}{2}
        \\
        & \overset{\alpha \leqslant \nicefrac{1}{3}}{\leqslant} & \exp \left(-\frac{1}{250} \left( \frac{\mu}{L_*} \right)^{1 - 2\gamma^*} N \right) \left( f(x^0) - f^* + \frac{\mu}{4} R^2 \right)
        \\
        & + & \frac{39 \mu R^2}{2}.
    \end{eqnarray*}
    Thus, to solve problem $f(x^N) - f^* \leqslant \varepsilon$, where $0 < \varepsilon \leqslant f(x^0) - f^*$ one can choose (with $L_0 \geqslant \mu$):
    \begin{eqnarray*}
        \mu
        & = & \frac{\varepsilon}{39 R^2} \Longrightarrow \gamma^* = \min \left \lbrace \frac{1}{4}, \log_{\frac{\varepsilon}{39 L_* R^2}} \left(3 \alpha \right) \right \rbrace, \text{ if } \alpha = 0, \text{ then } \gamma^* = \frac{1}{4}.
    \end{eqnarray*}
    then after (since $\frac{\mu}{4} R^2 \leqslant \frac{\varepsilon}{156}$):
    \begin{eqnarray*}
        N
        & = & \max \left\{ 0, \left\lceil 250 \left(\frac{L_*}{\mu} \right)^{1 - 2\gamma^*} \ln \left( \frac{157 (f(x^0) - f^*)}{78 \varepsilon} \right) \right\rceil \right \}
        \\
        & = & \max \left\{ 0, \left\lceil 250 \left(\frac{39 L_* R^2}{\varepsilon} \right)^{1 - 2\gamma^*} \ln \left( \frac{157 (f(x^0) - f^*)}{78 \varepsilon} \right) \right\rceil \right \}.
    \end{eqnarray*}
    iterations method will provide $f(x^N) - f^* \leqslant \varepsilon$.
\end{proof}

\end{appendix}

\end{document}